\newtheorem{theorem}{Theorem}[section] 
\newtheorem*{theorem*}{Theorem}
\newtheorem*{problem*}{Problem}
\newtheorem{lemma}[theorem]{Lemma}
\newtheorem{proposition}[theorem]{Proposition}
\newtheorem{definition}[theorem]{Definition}
\theoremstyle{remark}
\newtheorem{remark}[theorem]{Remark}
   \renewcommand{\H}{\mbox{${\mathcal H}$}}
   \newcommand{\N}{\mbox{${\mathbb N}$}}
   \def\Sc{{\mathcal{S}}}
   \def\tr{{\mathrm{Tr}\,}}
\begin{document}

\title[Perturbation theory and higher order $\Sc^p$-differentiability of operator functions]
{Perturbation theory and higher order $\Sc^p$-differentiability of operator functions}

\author[C. Coine]{Cl\'ement Coine}
\email{clement.coine1@gmail.com}

\address{School of Mathematics and Statistics, Central South University, Changsha 410085,
People’s Republic of China}

\subjclass[2000]
{47B49, 47B10, 47A55, 46L52}

\keywords{Differentiation of operator functions, perturbation theory}

\date{\today}

\maketitle

\begin{abstract}
We establish, for $1 < p < \infty$, higher order $\mathcal{S}^p$-differentiability results of the function $\varphi : t\in \mathbb{R} \mapsto f(A+tK) - f(A)$ for selfadjoint operators $A$ and $K$ on a separable Hilbert space $\mathcal{H}$ with $K$ element of the Schatten class $\mathcal{S}^p(\mathcal{H})$ and $f$ $n$-times differentiable on $\mathbb{R}$. We prove that if either $A$ and $f^{(n)}$ are bounded or $f^{(i)}, 1 \leq i \leq n$ are bounded, $\varphi$ is $n$-times differentiable on $\mathbb{R}$ in the $\mathcal{S}^p$-norm with bounded $n$th derivative. If $f\in C^n(\mathbb{R})$ with bounded $f^{(n)}$, we prove that $\varphi$ is $n$-times continuously differentiable on $\mathbb{R}$. We give explicit formulas for the derivatives of $\varphi$, in terms of multiple operator integrals. As for application, we establish a formula and $\mathcal{S}^p$-estimates for operator Taylor remainders for a more extensive class of functions. These results are the $n$th order analogue of the results of \cite{KPSS}. They also extend the results of \cite{CLSS} from $\mathcal{S}^2(\mathcal{H})$ to $\mathcal{S}^p(\mathcal{H})$ and the results of \cite{LMS} from $n$-times continuously differentiable functions to $n$-times differentiable functions $f$.
\end{abstract}

\section{Introduction}

Let $\mathcal{H}$ be a separable Hilbert space and let, for any $1 < p < \infty$, $\mathcal{S}^p(\mathcal{H})$ be the Schatten class of order $p$ on $\mathcal{H}$.
Let $A$ be a (possibly unbounded) selfadjoint operator on $\mathcal{H}$ and let $K = K^* \in \mathcal{S}^p(\mathcal{H})$. Let $f : \mathbb{R} \to \mathbb{C}$ be a Lipschitz function. We let $\varphi$ to be the function defined on $\mathbb{R}$ by
$$
\varphi : t \in \mathbb{R} \mapsto f(A+tK) - f(A) \in \mathcal{S}^p(\mathcal{H}).
$$
In this paper, we prove higher order $\mathcal{S}^p$-differentiability results for $\varphi$ in the case of $n$-times differentiable functions $f$ with bounded (possibly discontinuous) $n$th derivative.

The study of differentiabily of $\varphi$ was initiated in \cite{DK1956} where it was shown that if $A$ and $K$ are bounded selfadjoint operators and $f \in C^2(\mathbb{R})$, $\varphi$ is differentiable in the operator norm with
$$
\varphi'(t) = \left[ \Gamma^{A+tK, A+tK}(f^{[1]}) \right](K), t\in \mathbb{R},
$$
where $\Gamma^{A+tK, A+tK}(f^{[1]})$ is a double operator integral associated with $f^{[1]}$, the divided difference of first order of $f$. See Section $\ref{MOI}$ for more details. This result was extended in \cite{BS3} and later in \cite{Peller1990} where it is proved that this result holds true for any $f$ in the Besov space $B^1_{\infty, 1}(\mathbb{R})$ and any selfadjoint operator $A$. Note that the conditions $f \in C^1(\mathbb{R})$ and $A$ bounded are not sufficient to ensure the differentiability of $\varphi$ in the operator norm, see \cite{Far}. However, in the case $K\in \mathcal{S}^p(\mathcal{H}), 1<p<\infty$, it is shown in \cite{KPSS} that if $f$ is differentiable on $\mathbb{R}$ with bounded derivative, then $\varphi$ is $\mathcal{S}^p$-differentiable on $\mathbb{R}$.

The question of higher order differentiability of $\varphi$ was studied in \cite{Stek}. Under certain assumptions on $f$, $\varphi$ is $n$-times differentiable for the operator norm and the derivatives of $\varphi$ are represented as multiple operator integrals. This result was extended in \cite{Peller2006} to any $f$ in the intersection $B^1_{\infty, 1}(\mathbb{R}) \cap B^n_{\infty, 1}(\mathbb{R})$ of Besov classes. In \cite{ACDS}, higher order differentiability of $\varphi$ is established in the symmetric operator ideal norm when $f$ is in the Wiener space $W_{n+1}(\mathbb{R})$. In the special case $p=2$, it is proved that if $f\in C^n(\mathbb{R})$ has bounded derivatives $f^{(i)}, 1\leq i\leq n$, $\varphi$ is $n$-times continuously $\mathcal{S}^2$-differentiable on $\mathbb{R}$, see \cite{CLSS}. For other values of $1 < p < \infty$, it is shown in \cite{LMS} that if $f\in C^n(\mathbb{R})$ has bounded derivatives, then $\varphi$ is $n$-times $\mathcal{S}^p$-differentiable. Moreover, for $1\leq p < \infty$, \cite[Theorem 4.1]{LMS} shows that for functions in $B^1_{\infty, 1}(\mathbb{R}) \cap B^n_{\infty, 1}(\mathbb{R})$, $\varphi$ is $n$-times $\mathcal{S}^p$-continuously differentiable.

Our main result is the following. Let $1<p<\infty$, $n\in \mathbb{N}$ and let $K=K^* \in \mathcal{S}^p(\mathcal{H})$. We prove that if $f$ is $n$-times differentiable on $\mathbb{R}$ with bounded (possibly discontinuous) $n$th derivative $f^{(n)}$, then for any bounded selfadjoint operator $A$, $\varphi$ is $n$-times differentiable on $\mathbb{R}$ and for any $1\leq k\leq n$, 
\begin{equation}\label{diffenrentialintro}
\dfrac{1}{k!} \varphi^{(k)}(t) = \left[ \Gamma^{A+tK, A+tK, \ldots, A+tK}(f^{[k]}) \right] (K, \ldots, K), t\in \mathbb{R}.
\end{equation}
This representation of $\varphi^{(k)}$ has been obtained for smaller classes of functions, see for instance \cite{ACDS,CLSS,Peller2006,Stek}.
In the case when $A$ is unbounded, we prove that if $f$ is $n$-times differentiable on $\mathbb{R}$ and has bounded derivatives $f^{(i)}, 1\leq i\leq n$, then so does $\varphi$. Namely, we show that $\varphi$ is $n$-times $\mathcal{S}^p$-differentiable on $\mathbb{R}$ with bounded derivatives $\varphi^{(j)}, 1 \leq j \leq n$, and that Formula \eqref{diffenrentialintro} holds. This is $n$th order analogue of \cite[Theorem 7.13]{KPSS}. It significantly improves the previous results on higher order differentiabily of operator functions in Schatten norms.

With Formula \eqref{diffenrentialintro}, we deduce a representation of Taylor remainders
$$
f(A+K)-f(A) - \sum_{k=1}^{n-1} \frac{1}{k!}\varphi^{(k)}(0)
$$
as a multiple operator integral and deduce an $\mathcal{S}^p$-estimate, which generalizes the estimate obtained in \cite{LMS}.

To obtain these results, we will establish important properties of multiple operator integrals. We choose the construction of operator integrals developed in \cite{CLS}. For any selfadjoint operators $A_1, \ldots, A_n$ and any bounded Borel function $\phi$ on $\mathbb{R}^{n}$, the multiple operator integral $\Gamma^{A_1, \ldots, A_n}(\phi)$ is a continuous $(n-1)$-linear mapping defined on the product of $n-1$ copies of $\mathcal{S}^2(\mathcal{H})$ and valued in $\mathcal{S}^2(\mathcal{H})$. We obtain a continuous operator
$
\Gamma^{A_1,A_2, \ldots, A_n} : L^{\infty}\left(\prod_{i=1}^n
\lambda_{A_i}\right) \rightarrow \mathcal{B}_{n-1}(\mathcal{S}^2(\mathcal{H}) \times \mathcal{S}^2(\mathcal{H})
\times \cdots \times \mathcal{S}^2(\mathcal{H}), \mathcal{S}^2(\mathcal{H}))
$
for some positive and finite measures $\lambda_{A_i}, 1 \leq i \leq n$. The advantage of this construction is the property of $w^*$-continuity of $\Gamma^{A_1,A_2, \ldots, A_n}$. It allows to reduce some computations to functions with separated variables, for which certain equations are straightforward to establish. In Section $\ref{HOPF}$, we extend a result on the $\mathcal{S}^p$-boundedness of multiple operator integrals associated to divided differences. Our main result will be proved by induction on $n$. To do so, we will first establish an important higher order perturbation formula allowing to express a difference of operator integrals associated to $f^{[n-1]}$ as a multiple operator integral associated to $f^{[n]}$. This formula will be fundamental to prove the existence of the $n$th derivative of $\varphi^{(n)}$ if $\varphi^{(n-1)}$ is known, as well as the representation of the derivatives of $\varphi$ as a multiple operator integral. Then, by the use of the lemmas proved in Section $\ref{SubAux}$, our proof will rest on the approximation of the operator $K$, allowing to simplify the expression of the multiple operator integrals involved.

We use the following notations. We let $(\mathcal{S}^p(\mathcal{H}))_{\text{sa}}$ (respectively $(\mathcal{B}(\mathcal{H})_{\text{sa}}$) to be the subspace of $\mathcal{S}^p(\mathcal{H})$ (respectively $\mathcal{B}(\mathcal{H})$) consisting of selfadjoint operators. We let $\text{Bor}(\mathbb{R})$ to be the space of bounded Borel functions from $\mathbb{R}$ into $\mathbb{C}$. For any $m\in \mathbb{N}$, we let $C_b(\mathbb{R}^m)$ to be space of continuous and bounded functions on $\mathbb{R}^{m}$ and $C_0(\mathbb{R}^m)$ to be the subspace of $C_b(\mathbb{R}^m)$ of continuous functions on $\mathbb{R}^m$ vanishing at infinity. For any $n\geq 1$, we let $C^n(\mathbb{R})$ to be the space of $n$-times continuously differentiable functions from $\mathbb{R}$ to $\mathbb{C}$. Finally, we let $D^n(\mathbb{R}, \mathcal{S}^p(\mathcal{H}))$ (respectively $C^n(\mathbb{R}, \mathcal{S}^p(\mathcal{H}))$) to be the space of $n$-times differentiable (respectively continuously differentiable) functions $\phi : \mathbb{R} \to \mathcal{S}^p(\mathcal{H})$ with derivatives denoted by $\phi^{(j)} : \mathbb{R} \to \mathcal{S}^p(\mathcal{H}), j=1, \ldots, n$.

\section{Multiple operator integration}\label{MOI}

In this section, we recall the definition of multiple operator integrals that we will use throughout the paper and give important properties that will be key to prove our main results.

\subsection{Multiple operator integrals associated to selfadjoint operators}

The following definition of multiple operator integration was developed in \cite{CLS}. It is based on the construction of \cite{Pav}. Several other constructions exist, see e.g. \cite{ACDS,BS1,DK1956,Peller2006,PSS-SSF}. The first advantage of this approach is that it allows us to integrate any bounded Borel function, in particular certain discontinuous ones, as it will be the case in this paper. The second advantage is the property of $w^*$-continuity, which allows to simplify many computations.

Let $n\in \mathbb{N}, n\geq 1$ and let $E_1, \ldots, E_n, E$ be Banach spaces. We denote by $\mathcal{B}_n(E_1 \times \cdots \times E_n, E)$ the space of $n$-linear continuous mappings from $E_1 \times \cdots \times E_n$ into $E$, that is, the space of $n$-linear mappings $T : E_1 \times \cdots \times E_n \to E$ such that
$$
\|T\|_{\mathcal{B}_n(E_1 \times \cdots \times E_n, E)} := \sup_{\|e_i\| \leq 1, 1\leq i \leq n} ~ \|T(e_1, \ldots, e_n)\| < \infty.
$$
In the case when $E_1  = \cdots = E_n = E$, we will simply denote $\mathcal{B}_n(E_1 \times \cdots \times E_n, E)$ by $\mathcal{B}_n(E)$.

Let $A$ be a (possibly unbounded) selfadjoint operator in $\mathcal{H}$. Denote its spectrum by $\sigma(A)$ and its measure spectral by $E^A$. Let $\lambda_A$ be a scalar-valued spectral measure for $A$, that is, a positive finite measure on the Borel subsets of $\sigma(A)$ such that $\lambda_A$ and $E^A$ have the same sets of measure zero. We refer to \cite[Section 15]{Conway} and \cite[Section 2.1]{CLS} for more details. For any bounded Borel function $f : \mathbb{R} \to \mathbb{C}$, we define $f(A) \in \mathcal{B}(\mathcal{H})$ by
$$
f(A):=\int_{\sigma(A)} f(t) \ \text{d}E^A(t),
$$
and this operator only depends on the class of $f$ in $L^{\infty}(\lambda_A)$. Moreover, according to \cite[Theorem 15.10]{Conway}, we obtain a $w^*$-continuous $*$-representation
$$
f \in L^{\infty}(\lambda_A) \mapsto f(A) \in \mathcal{B}(\mathcal{H}).
$$
Let $n\in\N, n\geq 2$ and let $A_1, A_2, \ldots, A_n$ be selfadjoint operators in $\mathcal{H}$ with scalar-valued spectral measures $\lambda_{A_1}, \ldots, \lambda_{A_n}$. We let
\begin{equation*}
\Gamma^{A_1,A_2, \ldots, A_n} : L^{\infty}(\lambda_{A_1}) \otimes \cdots \otimes L^{\infty}(\lambda_{A_n}) \rightarrow \mathcal{B}_{n-1}(\mathcal{S}^2(\mathcal{H}))
\end{equation*}
to be the unique linear map such that for any $f_i \in L^{\infty}(\lambda_{A_i}), i=1, \ldots, n$ and for any $X_1, \ldots, X_{n-1} \in \mathcal{S}^2(\mathcal{H})$,
\begin{align}\label{MOItensor}
\left[\Gamma^{A_1,A_2, \ldots, A_n}(f_1\otimes\cdots\otimes f_n)\right]
& (X_1,\ldots, X_{n-1})\\ \nonumber
& =f_1(A_1)X_1f_2(A_2) \cdots f_{n-1}(A_{n-1})X_{n-1}f_n(A_n).
\end{align}
Note that $\mathcal{B}_{n-1}(\mathcal{S}^2(\mathcal{H}))$ is a dual space, see \cite[Section 3.1]{CLS} for details. According to \cite[Theorem 5 and Proposition 6]{CLS}, $\Gamma^{A_1,A_2, \ldots, A_n}$ extends to a unique $w^*$-continuous contraction still denoted by
$$
\Gamma^{A_1,A_2, \ldots, A_n} : L^{\infty}\left(\prod_{i=1}^n
\lambda_{A_i}\right) \longrightarrow
\mathcal{B}_{n-1}(\mathcal{S}^2(\mathcal{H})).
$$

\begin{definition}
For $\phi \in L^{\infty}\left(\prod_{i=1}^n\lambda_{A_i}\right)$, the transformation $\Gamma^{A_1,A_2, \ldots, A_n}(\phi)$ is called a multiple operator integral associated to $A_1, A_2, \ldots, A_n$ and $\phi$.
\end{definition}

The $w^*$-continuity of $\Gamma^{A_1,A_2, \ldots, A_n}$ means that if a net $(\phi_i)_{i\in I}$ in $L^{\infty}\left(\prod_{i=1}^n
\lambda_{A_i}\right)$ converges to $\phi \in L^{\infty}\left(\prod_{i=1}^n \lambda_{A_i}\right)$ in the $w^*$-topology, then for any $X_1, \ldots, X_{n-1} \in \mathcal{S}^2(\mathcal{H})$, the net
$$
\bigl(\left[\Gamma^{A_1,A_2, \ldots, A_n}(\phi_i)\right](X_1,\ldots, X_{n-1})\bigr)_{i\in I}
$$
converges to $\left[\Gamma^{A_1,A_2, \ldots, A_n}(\phi)\right](X_1,\ldots, X_{n-1})$ weakly in $\mathcal{S}^2(\mathcal{H})$.\\

Let $\alpha_1, \ldots, \alpha_{n-1}, \alpha \in [1, \infty)$ and $\phi \in L^{\infty}\left(\prod_{i=1}^n\lambda_{A_i}\right)$. We will write $\Gamma^{A_1,A_2, \ldots, A_n}(\phi)$ $\in \mathcal{B}_{n-1}(\mathcal{S}^{\alpha_1} \times \cdots \times \mathcal{S}^{\alpha_{n-1}}, \mathcal{S}^{\alpha})$
if the multiple operator integral $\Gamma^{A_1,A_2, \ldots, A_n}(\phi)$ defines a bounded $(n-1)$-linear mapping
$$
\Gamma^{A_1,A_2, \ldots, A_n}(\phi) : \left( \mathcal{S}^2(\mathcal{H}) \cap \mathcal{S}^{\alpha_1}(\mathcal{H}) \right) \times \cdots \times \left( \mathcal{S}^2(\mathcal{H}) \cap \mathcal{S}^{\alpha_{n-1}}(\mathcal{H}) \right) \rightarrow \mathcal{S}^{\alpha}(\mathcal{H}),
$$
where $\mathcal{S}^2(\mathcal{H}) \cap \mathcal{S}^{\alpha_i}(\mathcal{H})$ is equipped with the $\|.\|_{\alpha_i}$-norm. By density of $\mathcal{S}^2(\mathcal{H}) \cap \mathcal{S}^{\alpha_i}(\mathcal{H})$ into $\mathcal{S}^{\alpha_i}(\mathcal{H})$, this mapping has a (necessarily) unique extension
$$
\Gamma^{A_1,A_2, \ldots, A_n}(\phi) : \mathcal{S}^{\alpha_1}(\mathcal{H}) \times \cdots \times \mathcal{S}^{\alpha_{n-1}}(\mathcal{H}) \rightarrow \mathcal{S}^{\alpha}(\mathcal{H}),
$$
which justifies the notation.

In the case when $\alpha_1 = \cdots= \alpha_{n-1} = \alpha$, we will simply write $\Gamma^{A_1,A_2, \ldots, A_n}(\phi) \in \mathcal{B}_{n-1}(\mathcal{S}^{\alpha}(\mathcal{H}))$.

\begin{remark}\label{Ineqmulti}
Let $\alpha_1, \ldots, \alpha_{n-1}, \alpha \in [1, \infty)$, let $n \geq 1$, $A_1, \ldots, A_n$ be selfadjoint operators on $\mathcal{H}$, $\phi \in L^{\infty}(\lambda_{A_1} \times \cdots \lambda_{A_n})$ and assume that $\Gamma^{A_1, \ldots, A_n}(\phi) \in \mathcal{B}_{n-1}(\mathcal{S}^{\alpha_1} \times \cdots \times \mathcal{S}^{\alpha_{n-1}}, \mathcal{S}^{\alpha})$. Let $0 < \epsilon < 1$, let $X_1, \ldots, X_{n-1}, Y_1, \ldots, Y_{n-1}$ where for any $1 \leq i \leq n-1$, $X_i, Y_i\in \mathcal{S}^{\alpha_i}(\mathcal{H})$ with $\|X_i - Y_i \|_{\alpha_i} \leq \epsilon$. By multilinearity of multiple operator integrals, it is easy to see that there exists a constant $C > 0$ depending only on $n, \|\Gamma^{A_1, \ldots, A_n}(\phi)\|_{\mathcal{B}_{n-1}(\mathcal{S}^{\alpha_1} \times \cdots \times \mathcal{S}^{\alpha_{n-1}}, \mathcal{S}^{\alpha})}, \|X_1\|_{\alpha_1},$ $\ldots, \|X_{n-1}\|_{\alpha_{n-1}}$ (or similarly, on $n, \|\Gamma^{A_1, \ldots, A_n}(\phi)\|_{\mathcal{B}_{n-1}(\mathcal{S}^{\alpha_1} \times \cdots \times \mathcal{S}^{\alpha_{n-1}}, \mathcal{S}^{\alpha})}, \|Y_1\|_{\alpha_i},$ $\ldots, \|Y_{n-1}\|_{\alpha_{n-1}}$) such that
$$
\| \left[ \Gamma^{A_1, \ldots, A_n}(\phi) \right](X_1, \ldots, X_{n-1}) - \left[ \Gamma^{A_1, \ldots, A_n}(\phi) \right](Y_1, \ldots, Y_{n-1}) \|_{\alpha} \leq C \epsilon. 
$$
\end{remark}

The following result will be used to prove the $\mathcal{S}^p$-boundedness of certain multiple operator integrals as well as to establish identities.

\begin{lemma}\label{LemmeUB}
Let $\alpha_1, \ldots, \alpha_{n-1}, \alpha \in (1, \infty)$, let $n \geq 1$, $A_1, \ldots, A_n$ be selfadjoint operators in $\mathcal{H}$ and $(\varphi_k)_{k\geq 1}, \varphi \in L^{\infty}(\lambda_{A_1} \times \cdots \lambda_{A_n})$. Assume that $(\varphi_k)_k$ is $w^*$-convergent to $\varphi$ and that $\left(\Gamma^{A_1, \ldots, A_n}(\varphi_k) \right)_{k\geq 1} \subset \mathcal{B}_{n-1}(\mathcal{S}^{\alpha_1} \times \cdots \times \mathcal{S}^{\alpha_{n-1}}, \mathcal{S}^{\alpha})$ is bounded. Then $\Gamma^{A_1, \ldots, A_n}(\varphi) \in \mathcal{B}_{n-1}(\mathcal{S}^{\alpha_1} \times \cdots \times \mathcal{S}^{\alpha_{n-1}}, \mathcal{S}^{\alpha})$ with
\begin{align*}
& \| \Gamma^{A_1, \ldots, A_n}(\varphi)\|_{\mathcal{B}_{n-1}(\mathcal{S}^{\alpha_1} \times \cdots \times \mathcal{S}^{\alpha_{n-1}}, \mathcal{S}^{\alpha})} \\
& \ \ \ \ \leq \liminf_k \| \Gamma^{A_1, \ldots, A_n}(\varphi_k)\|_{\mathcal{B}_{n-1}(\mathcal{S}^{\alpha_1} \times \cdots \times \mathcal{S}^{\alpha_{n-1}}, \mathcal{S}^{\alpha})}
\end{align*}
and for any $X_i \in \mathcal{S}^{\alpha_i}(\mathcal{H}), 1 \leq i \leq n-1$,
$$
\left[ \Gamma^{A_1, \ldots, A_n}(\varphi_k) \right](X_1, \ldots, X_{n-1}) \underset{k \to \infty}{\longrightarrow} \left[ \Gamma^{A_1, \ldots, A_n}(\varphi) \right](X_1, \ldots, X_{n-1})
$$
weakly in $\mathcal{S}^{\alpha}(\mathcal{H})$.
\end{lemma}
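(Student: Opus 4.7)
The plan is to exploit the $w^*$-continuity of $\Gamma^{A_1,\ldots,A_n}$ (which is already known as a map into $\mathcal{B}_{n-1}(\mathcal{S}^2(\mathcal{H}))$) and bootstrap it to the $\mathcal{S}^\alpha$ setting by a duality argument against finite rank operators. Fix $X_i \in \mathcal{S}^2(\mathcal{H}) \cap \mathcal{S}^{\alpha_i}(\mathcal{H})$ for $1\leq i\leq n-1$ and set
$$
Y_k := \left[\Gamma^{A_1,\ldots,A_n}(\varphi_k)\right](X_1,\ldots,X_{n-1}), \quad Y := \left[\Gamma^{A_1,\ldots,A_n}(\varphi)\right](X_1,\ldots,X_{n-1}).
$$
By the $w^*$-continuity recalled in Section \ref{MOI}, $Y_k \to Y$ weakly in $\mathcal{S}^2(\mathcal{H})$. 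Let $M_k := \|\Gamma^{A_1,\ldots,A_n}(\varphi_k)\|_{\mathcal{B}_{n-1}(\mathcal{S}^{\alpha_1}\times\cdots\times\mathcal{S}^{\alpha_{n-1}},\mathcal{S}^{\alpha})}$, so $\|Y_k\|_{\alpha} \leq M_k \prod_i \|X_i\|_{\alpha_i}$.

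Next I would establish the norm bound on $Y$ by testing against finite rank operators $Z$ with $\|Z\|_{\alpha'} \leq 1$, where $\alpha'$ is the conjugate exponent of $\alpha$ (finite since $\alpha > 1$). Since any such $Z$ lies in $\mathcal{S}^2(\mathcal{H})$, weak $\mathcal{S}^2$-convergence yields $\tr(YZ) = \lim_k \tr(Y_k Z)$, hence
$$
|\tr(YZ)| \leq \liminf_k \|Y_k\|_{\alpha}\|Z\|_{\alpha'} \leq \Big(\liminf_k M_k\Big) \prod_{i=1}^{n-1}\|X_i\|_{\alpha_i}.
$$
Because $Y \in \mathcal{S}^2(\mathcal{H}) \subset \mathcal{K}(\mathcal{H})$, the trace duality formula $\|Y\|_{\alpha} = \sup\{|\tr(YZ)| : Z \text{ finite rank}, \|Z\|_{\alpha'}\leq 1\}$ applies and yields $\|Y\|_{\alpha} \leq (\liminf_k M_k)\prod_i \|X_i\|_{\alpha_i}$. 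Taking the supremum over $\|X_i\|_{\alpha_i}\leq 1$ inside $\mathcal{S}^2(\mathcal{H}) \cap \mathcal{S}^{\alpha_i}(\mathcal{H})$ gives the desired norm estimate on the intersection domain; density of $\mathcal{S}^2(\mathcal{H}) \cap \mathcal{S}^{\alpha_i}(\mathcal{H})$ in $\mathcal{S}^{\alpha_i}(\mathcal{H})$ then provides the unique extension $\Gamma^{A_1,\ldots,A_n}(\varphi) \in \mathcal{B}_{n-1}(\mathcal{S}^{\alpha_1}\times\cdots\times\mathcal{S}^{\alpha_{n-1}},\mathcal{S}^{\alpha})$ with the stated norm bound.

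For the weak convergence in $\mathcal{S}^{\alpha}(\mathcal{H})$, I would argue in two stages. First, for $X_i \in \mathcal{S}^2(\mathcal{H}) \cap \mathcal{S}^{\alpha_i}(\mathcal{H})$ and $Z \in \mathcal{S}^{\alpha'}(\mathcal{H})$, approximate $Z$ in the $\|\cdot\|_{\alpha'}$-norm by a finite rank operator $Z_\varepsilon$. Then $\tr(Y_k Z_\varepsilon) \to \tr(Y Z_\varepsilon)$ by the weak $\mathcal{S}^2$-convergence, and the tail error $|\tr((Y_k-Y)(Z-Z_\varepsilon))|$ is controlled by $(\sup_k \|Y_k\|_\alpha + \|Y\|_\alpha)\|Z-Z_\varepsilon\|_{\alpha'}$, which is small because $\sup_k M_k < \infty$ by the hypothesis. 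Second, for arbitrary $X_i \in \mathcal{S}^{\alpha_i}(\mathcal{H})$, approximate each $X_i$ by $X_i^{(j)} \in \mathcal{S}^2(\mathcal{H}) \cap \mathcal{S}^{\alpha_i}(\mathcal{H})$ and use the multilinearity estimate of Remark \ref{Ineqmulti} uniformly in $k$ (valid since $\sup_k M_k < \infty$) to interchange the two limits.

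The main technical obstacle is the first stage: one must reconcile the two distinct weak topologies (on $\mathcal{S}^2$ and on $\mathcal{S}^{\alpha}$). The key point making this work is that the hypotheses give a uniform $\mathcal{S}^\alpha$-bound on the sequence $(Y_k)$, which combined with density of finite rank operators in $\mathcal{S}^{\alpha'}(\mathcal{H})$ (using $1<\alpha'<\infty$) promotes weak $\mathcal{S}^2$-convergence to weak $\mathcal{S}^{\alpha}$-convergence; this is exactly the reason the statement is restricted to $\alpha_i, \alpha \in (1,\infty)$.
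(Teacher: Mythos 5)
Your proposal is correct and follows essentially the same route as the paper: test $\left[\Gamma^{A_1,\ldots,A_n}(\varphi)\right](X_1,\ldots,X_{n-1})$ against finite-rank operators in the $\mathcal{S}^{\alpha'}$-unit ball using the weak $\mathcal{S}^2$-convergence supplied by $w^*$-continuity, deduce the norm bound by trace duality and extend by density, then upgrade to weak $\mathcal{S}^{\alpha}$-convergence by approximating $Z\in\mathcal{S}^{\alpha'}$ by finite-rank operators and the $X_i$ by elements of $\mathcal{S}^2(\mathcal{H})\cap\mathcal{S}^{\alpha_i}(\mathcal{H})$, with the uniform bound $\sup_k M_k<\infty$ and Remark \ref{Ineqmulti} controlling the errors. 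The paper merely organizes the final approximation as a single chain of triangle inequalities rather than your two stages; the substance is identical.
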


\begin{proof}
Let $X_i \in \mathcal{S}^2(\mathcal{H}) \cap \mathcal{S}^{\alpha_i}(\mathcal{H}), 1 \leq i \leq n-1$, and let $Y$ be a finite-rank operator on $\mathcal{H}$ such that $\|Y\|_{\alpha'} \leq 1$. Let
$$\gamma := \liminf_k \| \Gamma^{A_1, \ldots, A_n}(\varphi_k)\|_{\mathcal{B}_{n-1}(\mathcal{S}^{\alpha_1} \times \cdots \times \mathcal{S}^{\alpha_{n-1}}, \mathcal{S}^{\alpha})}.$$
By $w^*$-continuity of multiple operator integrals and the assumptions of the Lemma we have
\begin{align*}
& | \tr(\left[\Gamma^{A_1,A_2, \ldots, A_n}(\varphi)\right](X_1,\ldots,X_{n-1})Y)| \\
& = \liminf_k | \tr(\left[\Gamma^{A_1,A_2, \ldots, A_n}(\varphi_k)\right](X_1,\ldots,X_{n-1})Y)| \\
& \leq \liminf_k \| \left[\Gamma^{A_1,A_2, \ldots, A_n}(\varphi_k)\right](X_1,\ldots,X_{n-1}) \|_{\alpha} \|Y\|_{\alpha'} \\
& \leq \gamma \|X_1\|_{\alpha_1} \cdots \|X_{n-1}\|_{\alpha_{n-1}}.
\end{align*}
This inequality holds true for any finite-rank operator $Y$ on $\mathcal{H}$ with $\|Y\|_{\alpha'} \leq 1$, hence
\begin{equation}\label{Proofineqp}
\| \left[\Gamma^{A_1,A_2, \ldots, A_n}(\varphi)\right](X_1,\ldots,X_{n-1}) \|_{\alpha} \leq \gamma \|X_1\|_{\alpha_1} \cdots \|X_n\|_{\alpha_{n-1}}.
\end{equation}
This implies that $\Gamma^{A_1, \ldots, A_n}(\varphi) \in \mathcal{B}_{n-1}(\mathcal{S}^{\alpha_1} \times \cdots \times \mathcal{S}^{\alpha_{n-1}}, \mathcal{S}^{\alpha})$ with
$$
\| \Gamma^{A_1, \ldots, A_n}(\varphi)\|_{\mathcal{B}_{n-1}(\mathcal{S}^{\alpha_1} \times \cdots \times \mathcal{S}^{\alpha_{n-1}}, \mathcal{S}^{\alpha})} \leq \gamma.
$$
Let $0 < \epsilon < 1$. For any $1 \leq i \leq n-1$, let $X_i \in \mathcal{S}^{\alpha_i}(\mathcal{H}), \tilde{X}_i \in \mathcal{S}^2(\mathcal{H}) \cap \mathcal{S}^{\alpha_i}(\mathcal{H})$ such that $\| X_i - \tilde{X}_i \|_{\alpha_i} \leq \epsilon$. Let $Z \in \mathcal{S}^{\alpha'}(\mathcal{H})$ and $Y$ be a finite-rank operator on $\mathcal{H}$ such that $\| Z - Y \|_{\alpha'} \leq \epsilon$. Write, for any $k\geq 1$,
$$\Gamma_{k,X} = \left[\Gamma^{A_1,A_2, \ldots, A_n}(\varphi_k)\right](X_1,\ldots, X_{n-1})$$
and
$$\tilde{\Gamma}_{k,X} = \left[\Gamma^{A_1,A_2, \ldots, A_n}(\varphi_k)\right](\tilde{X}_1, \ldots, \tilde{X}_{n-1}).
$$
Similarly, write
$$\Gamma_{X} = \left[\Gamma^{A_1,A_2, \ldots, A_n}(\varphi)\right](X_1,\ldots,X_{n-1})$$
and
$$\tilde{\Gamma}_X = \left[\Gamma^{A_1,A_2, \ldots, A_n}(\varphi)\right](\tilde{X}_1, \ldots, \tilde{X}_{n-1}).
$$
Since $\left(\Gamma^{A_1, \ldots, A_n}(\varphi_k) \right)_{k\geq 1} \subset \mathcal{B}_{n-1}(\mathcal{S}^{\alpha_1} \times \cdots \times \mathcal{S}^{\alpha_{n-1}}, \mathcal{S}^{\alpha})$ is bounded, we can set $C' := \sup_j \|\Gamma^{A_1, \ldots, A_n}(\varphi_j)\|_{\mathcal{B}_{n-1}(\mathcal{S}^{\alpha_1} \times \cdots \times \mathcal{S}^{\alpha_{n-1}}, \mathcal{S}^{\alpha})}$. By Remark $\ref{Ineqmulti}$, there exists a constant $C > 0$ depending only on $n, C', \|X_1\|_{\alpha_1}, \ldots, \|X_{n-1}\|_{\alpha_{n-1}}$ such that, for any $k\geq 1$,
\begin{equation}\label{lemma1simp}
\| \Gamma_{k,X} - \tilde{\Gamma}_{k,X} \|_p \leq C \epsilon \ \ \text{and} \ \ \| \Gamma_{X} - \tilde{\Gamma}_{X} \|_p \leq C \epsilon.
\end{equation}
By the first part of the proof, there exists $k_0 \in \mathbb{N}$ such that for any $k\geq k_0$,
\begin{equation}\label{lemma2simp}
| \tr((\tilde{\Gamma}_X - \tilde{\Gamma}_{k,X})Y)| < \epsilon.
\end{equation}
Hence, by \eqref{Proofineqp}, \eqref{lemma1simp} and \eqref{lemma2simp} we have, for any $k\geq k_0$,
\begin{align*}
& | \tr(\Gamma_{X}Z) - \tr(\Gamma_{k,X}Z) | \\
& \leq | \tr(\Gamma_{X}(Z-Y)) | + | \tr((\Gamma_{X} - \tilde{\Gamma}_{X})Y) | + | \tr((\tilde{\Gamma}_{X} - \tilde{\Gamma}_{k,X})Y) | \\
& \ \ \ + | \tr((\tilde{\Gamma}_{k,X} - \Gamma_{k,X})Y) | + | \tr(\Gamma_{k,X}(Y-Z)) | \\
& \leq \left(\gamma \prod_{i=1}^{n-1} \|X_i\|_{\alpha_i} + C \|Y\|_{\alpha'} + 1 + C \|Y\|_{\alpha'} + C'\prod_{i=1}^{n-1} \|X_i\|_{\alpha_i} \right) \epsilon.
\end{align*}
Since $\|Y\|_{\alpha'} \leq \|Z\|_{\alpha'} + \epsilon$, we proved that $$
\left[ \Gamma^{A_1, \ldots, A_n}(\varphi_k) \right](X_1, \ldots, X_{n-1}) \underset{k \to \infty}{\longrightarrow} \left[ \Gamma^{A_1, \ldots, A_n}(\varphi) \right](X_1, \ldots, X_{n-1})
$$
weakly in $\mathcal{S}^{\alpha}(\mathcal{H})$.

\end{proof}

The next three lemmas give various algebraic properties of multiple operator integrals which will be used in Section $\ref{HOPF}$ and Section $\ref{ProofofMR}$. The proofs of the following results are quite similar: we first prove them in the case $p=2$ for which the $w^*$-continuity of multiple operator integrals allows to reduce the computations to elementary tensors of functions, and then deduce the general case $1 \leq p <\infty$ by approximating the operators in $\mathcal{S}^p(\mathcal{H})$ by operators in $\mathcal{S}^2(\mathcal{H}) \cap \mathcal{S}^p(\mathcal{H})$.

\begin{lemma}\label{simplification1}
Let $1 \leq p < \infty$. Let $n \geq 2$ and $1\leq j \leq n-1$. Let $A_1, \ldots, A_n$ be selfadjoint operators on $\mathcal{H}$. Let $\phi_1 \in L^{\infty}(\lambda_{A_1} \times \cdots \times \lambda_{A_n})$ and $\phi_2 \in L^{\infty}(\lambda_{A_j} \times \lambda_{A_{j+1}})$ be such that
$$
\Gamma^{A_1, \ldots, A_n}(\phi_1) \in \mathcal{B}_{n-1}(\mathcal{S}^p(\mathcal{H})) \ \
\text{and} \ \
\Gamma^{A_j, A_{j+1}}(\phi_2) \in \mathcal{B}(\mathcal{S}^p(\mathcal{H})).
$$
We define $\widetilde{\phi_2} \in L^{\infty}(\lambda_{A_1} \times \cdots \times \lambda_{A_n})$ by
\begin{equation}\label{fortilde}
\widetilde{\phi_2}(x_1, \ldots, x_n) = \phi_2(x_j, x_{j+1})
\end{equation}
a.e. on $\sigma(A_1) \times \cdots \times \sigma(A_n)$.
Then
$$
\Gamma^{A_1, \ldots, A_n}(\phi_1 \widetilde{\phi_2}) \in \mathcal{B}_{n-1}(\mathcal{S}^p(\mathcal{H}))
$$
and for all $K_1, \ldots, K_{n-1} \in \mathcal{S}^p(\mathcal{H})$ we have
\begin{align}\label{fortildeeq}
\begin{split}
& \left[ \Gamma^{A_1, \ldots, A_n}(\phi_1 \widetilde{\phi_2})\right] (K_1, \ldots, K_{n-1}) \\
& \ \ \ \ \ \ \ \ \ = \left[ \Gamma^{A_1, \ldots, A_n}(\phi_1)\right] \left(K_1, \ldots, K_{j-1}, \left[\Gamma^{A_j, A_{j+1}}(\phi_2)\right](K_j), K_{j+1}, \ldots, K_{n-1}\right).
\end{split}
\end{align}
\end{lemma}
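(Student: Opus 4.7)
The plan is to first establish the identity \eqref{fortildeeq} in the Hilbert--Schmidt case $p=2$, where no boundedness hypothesis on $\phi_2$ is needed and the full strength of $w^*$-continuity of $\Gamma^{A_1,\ldots,A_n}$ is available, and then to bootstrap to general $1 \leq p < \infty$ via density of $\mathcal{S}^2(\mathcal{H}) \cap \mathcal{S}^p(\mathcal{H})$ in $\mathcal{S}^p(\mathcal{H})$.

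For $p=2$, I first verify \eqref{fortildeeq} on elementary tensors: take $\phi_1 = f_1 \otimes \cdots \otimes f_n$ and $\phi_2 = g_j \otimes g_{j+1}$; then $\phi_1 \widetilde{\phi_2}$ is the elementary tensor obtained from $\phi_1$ by replacing $f_j$ with $f_j g_j$ and $f_{j+1}$ with $f_{j+1} g_{j+1}$. Applying \eqref{MOItensor} to both sides of \eqref{fortildeeq}, and using $f_j(A_j) g_j(A_j) = (f_j g_j)(A_j)$ (functoriality of the functional calculus), the two sides coincide by direct inspection. Bilinearity extends this to $\phi_1 \in L^\infty(\lambda_{A_1}) \otimes \cdots \otimes L^\infty(\lambda_{A_n})$ and $\phi_2 \in L^\infty(\lambda_{A_j}) \otimes L^\infty(\lambda_{A_{j+1}})$. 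To remove the tensor assumption on $\phi_2$, I fix such an elementary $\phi_1$ and argue by $w^*$-continuity in $\phi_2$: the map $\phi_2 \mapsto \phi_1 \widetilde{\phi_2}$ from $L^\infty(\lambda_{A_j} \times \lambda_{A_{j+1}})$ into $L^\infty\bigl(\prod_{i=1}^n \lambda_{A_i}\bigr)$ is $w^*$-continuous (it is the composition of the lift $\phi_2 \mapsto \widetilde{\phi_2}$ and pointwise multiplication by the fixed $\phi_1$, both of which are adjoints of $L^1$-level operations), so precomposing with the $w^*$-continuous $\Gamma^{A_1,\ldots,A_n}$ yields $w^*$-continuity of the left-hand side; on the right, $w^*$-continuity of $\Gamma^{A_j,A_{j+1}}$ gives weak convergence in $\mathcal{S}^2(\mathcal{H})$ of $\Gamma^{A_j,A_{j+1}}(\phi_2)(K_j)$, which is then preserved by the continuous map $\Gamma^{A_1,\ldots,A_n}(\phi_1)$ on $\mathcal{S}^2(\mathcal{H})$. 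Since the algebraic tensor product is $w^*$-dense in $L^\infty(\lambda_{A_j} \times \lambda_{A_{j+1}})$, \eqref{fortildeeq} extends to all $\phi_2$; a symmetric $w^*$-density argument in $\phi_1$ then concludes the case $p=2$.

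For general $p$, take $K_1, \ldots, K_{n-1} \in \mathcal{S}^2(\mathcal{H}) \cap \mathcal{S}^p(\mathcal{H})$. The operator $\Gamma^{A_j, A_{j+1}}(\phi_2)(K_j)$ lies in $\mathcal{S}^2(\mathcal{H}) \cap \mathcal{S}^p(\mathcal{H})$: in $\mathcal{S}^p$ by hypothesis on $\phi_2$, and in $\mathcal{S}^2$ by the intrinsic $\mathcal{S}^2$-boundedness of any double operator integral. The $p=2$ identity therefore plugs in on the right-hand side of \eqref{fortildeeq}, and combining the two boundedness hypotheses yields
\begin{equation*}
\|[\Gamma^{A_1,\ldots,A_n}(\phi_1 \widetilde{\phi_2})](K_1, \ldots, K_{n-1})\|_p \leq C \prod_{i=1}^{n-1} \|K_i\|_p,
\end{equation*}
with $C = \|\Gamma^{A_1,\ldots,A_n}(\phi_1)\|_{\mathcal{B}_{n-1}(\mathcal{S}^p(\mathcal{H}))} \cdot \|\Gamma^{A_j,A_{j+1}}(\phi_2)\|_{\mathcal{B}(\mathcal{S}^p(\mathcal{H}))}$. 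Hence the restriction of $\Gamma^{A_1,\ldots,A_n}(\phi_1 \widetilde{\phi_2})$ to $(\mathcal{S}^2(\mathcal{H}) \cap \mathcal{S}^p(\mathcal{H}))^{n-1}$ is $\mathcal{S}^p$-bounded, and by density it extends uniquely to $(\mathcal{S}^p(\mathcal{H}))^{n-1}$; the right-hand side of \eqref{fortildeeq} is simultaneously continuous in $(K_1, \ldots, K_{n-1})$ by Remark \ref{Ineqmulti} applied to $\Gamma^{A_j,A_{j+1}}(\phi_2)$ and $\Gamma^{A_1,\ldots,A_n}(\phi_1)$, so passing to the $\mathcal{S}^p$-limit gives \eqref{fortildeeq} in full generality. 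The main technical point is the $w^*$-continuity invoked in the $p=2$ step, and specifically that the lift $\phi_2 \mapsto \widetilde{\phi_2}$ and multiplication by a fixed $L^\infty$ function are $w^*$-continuous; both reduce to identifying their preduals at the $L^1$ level. Once this is in place, the rest is routine bookkeeping combining \eqref{MOItensor}, multilinearity, and the density estimate from Remark \ref{Ineqmulti}.
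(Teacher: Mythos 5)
Your proposal is correct and follows essentially the same route as the paper: verification on elementary tensors via \eqref{MOItensor}, extension by bilinearity and the $w^*$-continuity of $\Gamma^{A_1,\ldots,A_n}$ (your appeal to predual-level adjoints for the lift $\phi_2\mapsto\widetilde{\phi_2}$ and multiplication by a fixed $L^\infty$ function is exactly what the paper's Fubini computation establishes), and then the passage from $p=2$ to general $p$ by density of $\mathcal{S}^2(\mathcal{H})\cap\mathcal{S}^p(\mathcal{H})$ using the two boundedness hypotheses and Remark \ref{Ineqmulti}. No gaps.
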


\begin{proof}
Assume that $p=2$. We first prove the result when $\phi_1 = f_1 \otimes \cdots \otimes f_n$ and $\phi_2 = g_j \otimes g_{j+1}$ where for any $1\leq i \leq n, f_i\in L^{\infty}(\lambda_{A_i}), g_j \in L^{\infty}(\lambda_{A_j}), g_{j+1} \in L^{\infty}(\lambda_{A_{j+1}})$. In this case,
$$
\phi_1 \widetilde{\phi_2} = f_1 \otimes \cdots \otimes f_{j-1} \otimes f_j g_j \otimes f_{j+1} g_{j+1} \otimes f_{j+2} \otimes \cdots \otimes f_n
$$
so we have, by \eqref{MOItensor},
\begin{align*}
& \left[ \Gamma^{A_1, \ldots, A_n}(\phi_1 \widetilde{\phi_2})\right] (K_1, \ldots, K_{n-1}) \\
& = f_1(A_1)K_1 \ldots K_{j-1} f_j(A_j) g_j(A_j) K_j g_{j+1}(A_{j+1}) f_{j+1}(A_{j+1}) K_{j+1} \ldots K_{n-1}f_n(A_n) \\
& = f_1(A_1)K_1 \ldots K_{j-1} f_j(A_j) \left[\Gamma^{A_j, A_{j+1}}(g_j\otimes g_{j+1})\right](K_j) f_{j+1}(A_{j+1}) K_{j+1} \ldots K_{n-1}f_n(A_n) \\
& = \left[ \Gamma^{A_1, \ldots, A_n}(\phi_1)\right] \left(K_1, \ldots, K_{j-1}, \left[\Gamma^{A_j, A_{j+1}}(\phi_2)\right](K_j), K_{j+1}, \ldots, K_{n-1}\right),
\end{align*}
which proves the result for such $\phi_1$ and $\phi_2$. Note that this formula is bilinear in $(\phi_1, \phi_2)$, hence the result holds true whenever $\phi_1 \in L^{\infty}(\lambda_{A_1}) \otimes \cdots \otimes L^{\infty}(\lambda_{A_n})$ and $\phi_2 \in L^{\infty}(\lambda_{A_j}) \otimes L^{\infty}(\lambda_{A_{j+1}})$.

In the general case, we let $(\phi_{1,s})_{s\in S} \subset L^{\infty}(\lambda_{A_1}) \otimes \cdots \otimes L^{\infty}(\lambda_{A_n})$ and $(\phi_{2,t})_{t\in T} \subset L^{\infty}(\lambda_{A_j}) \otimes L^{\infty}(\lambda_{A_{j+1}})$ be two nets converging to $\phi_1$ and $\phi_2$, respectively for the $w^*$-topology of $L^{\infty}(\lambda_{A_1} \times \cdots \times \lambda_{A_n})$ and for the $w^*$-topology of $L^{\infty}(\lambda_{A_j} \times \lambda_{A_{j+1}})$. Fix $s\in S$ and assume first that $\phi_{1,s} = f_1 \otimes \cdots \otimes f_n$. By the previous computation, we have, for any $t\in T$,
\begin{align}\label{for1}
\begin{split}
& \left[ \Gamma^{A_1, \ldots, A_n}(\phi_{1,s} \widetilde{\phi_{2,t}})\right] (K_1, \ldots, K_{n-1}) \\
& = f_1(A_1)K_1 \ldots K_{j-1} f_j(A_j) \left[\Gamma^{A_j, A_{j+1}}(\phi_{2,t})\right](K_j) f_{j+1}(A_{j+1}) K_{j+1} \ldots K_{n-1}f_n(A_n).
\end{split}
\end{align}
where $\widetilde{\phi_{2,t}}$ is defined as in \eqref{fortilde}. By the $w^*$-continuity of $\Gamma^{A_j, A_{j+1}}$, we get that the right-hand side of \eqref{for1} converges, in the $w^*$-topology of $\mathcal{S}^2(\mathcal{H})$, to
\begin{align*}
& f_1(A_1)K_1 \ldots K_{j-1} f_j(A_j) \left[\Gamma^{A_j, A_{j+1}}(\phi_2)\right](K_j) f_{j+1}(A_{j+1}) K_{j+1} \ldots K_{n-1}f_n(A_n) \\
& = \left[ \Gamma^{A_1, \ldots, A_n}(\phi_{1,s})\right] \left(K_1, \ldots, K_{j-1}, \left[\Gamma^{A_j, A_{j+1}}(\phi_2)\right](K_j), K_{j+1}, \ldots, K_{n-1}\right).
\end{align*}
For the left-hand side of \eqref{for1}, we show that $(\phi_{1,s} \widetilde{\phi_{2,t}})_{t\in T}$ $w^*$-converges to $\phi_{1,s} \widetilde{\phi_2}$. Indeed, let $g\in L^1(\lambda_{A_1} \times \cdots \times \lambda_{A_n})$. Then, writing $\Omega = \sigma(A_1) \times \cdots \times \sigma(A_n)$, we have, by Fubini's theorem,
\begin{align*}
& \int_{\Omega} \phi_{1,s} \widetilde{\phi_{2,t}} ~ g \ \text{d}\lambda_{A_1} \cdots \text{d}\lambda_{A_n} \\
& = \int_{\sigma(A_j) \times \sigma(A_{j+1})} \phi_{2,t} \left( \int_{\prod_{i \neq j,j+1} \sigma(A_i)} \phi_{1,s} g \ \prod_{i \neq j,j+1}\text{d}\lambda_{A_j} \right) \text{d}\lambda_{A_j}\text{d}\lambda_{A_{j+1}} \\
& := \int_{\sigma(A_j) \times \sigma(A_{j+1})} \phi_{2,t} \psi_s \ \text{d}\lambda_{A_j}\text{d}\lambda_{A_{j+1}}.
\end{align*}
By Fubini's theorem, we have the inequality
\begin{align*}
\int_{\sigma(A_j) \times \sigma(A_{j+1})} | \psi_s | \ \text{d}\lambda_{A_j}\text{d}\lambda_{A_{j+1}}
& \leq \int_{\Omega} | \phi_{1,s} g | \ \text{d}\lambda_{A_1} \cdots \text{d}\lambda_{A_n} \\
& \leq \| \phi_{1,s} \|_{\infty} \| g\|_{1},
\end{align*}
which shows that $\psi_s \in L^1(\lambda_{A_j} \times \lambda_{A_{j+1}})$. Hence,
$$
\int_{\sigma(A_j) \times \sigma(A_{j+1})} \phi_{2,t} \psi_s \ \text{d}\lambda_{A_j}\text{d}\lambda_{A_{j+1}} \underset{t}{\longrightarrow} \int_{\sigma(A_j) \times \sigma(A_{j+1})} \phi_2 \psi_s \ \text{d}\lambda_{A_j}\text{d}\lambda_{A_{j+1}},
$$
which is in turn equal to $\displaystyle \int_{\Omega} \phi_{1,s} \widetilde{\phi_2} ~ g \ \text{d}\lambda_{A_1} \cdots \text{d}\lambda_{A_n}$. This shows that $(\phi_{1,s} \widetilde{\phi_{2,t}})_{t\in T}$ $w^*$-converges to $\phi_{1,s} \widetilde{\phi_2}$. By $w^*$-continuity of multiple operator integrals, we have, taking the limit in the weak topology of $\mathcal{S}^2(\mathcal{H})$ in \eqref{for1},
\begin{align}\label{for2}
\begin{split}
& \left[ \Gamma^{A_1, \ldots, A_n}(\phi_{1,s} \widetilde{\phi_2})\right] (K_1, \ldots, K_{n-1}) \\
& = \left[ \Gamma^{A_1, \ldots, A_n}(\phi_{1,s})\right] \left(K_1, \ldots, K_{j-1}, \left[\Gamma^{A_j, A_{j+1}}(\phi_2)\right](K_j), K_{j+1}, \ldots, K_{n-1}\right).
\end{split}
\end{align}
Note that, by linearity, this equality holds true whenever $\phi_{1,s} \in L^{\infty}(\lambda_{A_1}) \otimes \cdots \otimes L^{\infty}(\lambda_{A_n}).$ Since $(\phi_{1,s} \widetilde{\phi_2})_{s\in S}$ $w^*$-converges to $\phi_1 \widetilde{\phi_2}$ we have, by taking the limit in the weak topology of $\mathcal{S}^2(\mathcal{H})$ in \eqref{for2},
\begin{align*}
& \left[ \Gamma^{A_1, \ldots, A_n}(\phi_1 \widetilde{\phi_2})\right] (K_1, \ldots, K_{n-1}) \\
& = \left[ \Gamma^{A_1, \ldots, A_n}(\phi_1)\right] \left(K_1, \ldots, K_{j-1}, \left[\Gamma^{A_j, A_{j+1}}(\phi_2)\right](K_j), K_{j+1}, \ldots, K_{n-1}\right).
\end{align*}

Assume now that $1\leq p < \infty$ and let $K_1, \ldots, K_{n-1} \in \mathcal{S}^2(\mathcal{H}) \cap \mathcal{S}^p(\mathcal{H})$. By assumption, there exist $A_p, B_p >0$ such that 
\begin{align}\label{densitysimp1}
\begin{split}
& \| \left[ \Gamma^{A_1, \ldots, A_n}(\phi_1)\right] \left(K_1, \ldots, K_{j-1}, \left[\Gamma^{A_j, A_{j+1}}(\phi_2)\right](K_j), K_{j+1}, \ldots, K_{n-1}\right) \|_p \\
& \leq A_p \|K_1\|_p \ldots \|K_{j-1}\|_p \| \left[\Gamma^{A_j, A_{j+1}}(\phi_2)\right](K_j) \|_p \|K_{j+1}\|_p \ldots \| K_{n-1}\|_p \\
& \leq A_p B_p \prod_{i=1}^{n-1} \|K_i\|_p.
\end{split}
\end{align}
Since for all $1\leq i \leq n-1, K_i \in \mathcal{S}^2(\mathcal{H})$, equality \eqref{fortildeeq} holds and we deduce the inequality
\begin{align}\label{densitysimp2}
\left\| \left[ \Gamma^{A_1, \ldots, A_n}(\phi_1 \widetilde{\phi_2})\right] (K_1, \ldots, K_{n-1}) \right\|_p  \leq A_p B_p \prod_{i=1}^{n-1} \|K_i\|_p.
\end{align}
By density of $\mathcal{S}^2(\mathcal{H}) \cap \mathcal{S}^p(\mathcal{H})$ in $\mathcal{S}^p(\mathcal{H})$, we get that $\Gamma^{A_1, \ldots, A_n}(\phi_1 \widetilde{\phi_2}) \in \mathcal{B}_{n-1}(\mathcal{S}^p(\mathcal{H}))$ and that inequalities \eqref{densitysimp1} and \eqref{densitysimp2} hold true for any $K_1, \ldots, K_{n-1} \in \mathcal{S}^p(\mathcal{H})$.

Finally, to prove equality \eqref{fortildeeq} in the case when $K_1, \ldots, K_{n-1} \in \mathcal{S}^p(\mathcal{H})$, we approximate $K_i, 1\leq i \leq n-1,$ by elements of $\mathcal{S}^2(\mathcal{H}) \cap \mathcal{S}^p(\mathcal{H})$, using inequalities \eqref{densitysimp1} and \eqref{densitysimp2}.

\end{proof}

\begin{lemma}\label{simplificationsep}
Let $1 \leq p < \infty$. Let $n \geq 3$ and $2\leq j \leq n-1$. Let $A_1, \ldots, A_n$ be selfadjoint operators on $\mathcal{H}$. Let $\phi_1 \in L^{\infty}(\lambda_{A_1} \times \cdots \times \lambda_{A_j})$ and $\phi_2 \in L^{\infty}(\lambda_{A_j} \times \cdots \times \lambda_{A_n})$ be such that
$$
\Gamma^{A_1, \ldots, A_j}(\phi_1) \in \mathcal{B}_{j-1}(\mathcal{S}^p(\mathcal{H})) \ \
\text{and} \ \
\Gamma^{A_j, \ldots, A_n}(\phi_2) \in \mathcal{B}_{n-j}(\mathcal{S}^p(\mathcal{H})).
$$
We define $\phi \in L^{\infty}(\lambda_{A_1} \times \cdots \times \lambda_{A_n})$ by
\begin{equation*}
\phi(x_1,\ldots, x_n) = \phi_1(x_1, \ldots, x_j) \phi_2(x_j, \ldots, x_n)
\end{equation*}
a.e. on $\sigma(A_1) \times \cdots \times \sigma(A_n)$.
Then
$$
\Gamma^{A_1, \ldots, A_n}(\phi) \in \mathcal{B}_{n-1}(\mathcal{S}^p(\mathcal{H}))
$$
and for all $K_1, \ldots, K_{n-1} \in \mathcal{S}^p(\mathcal{H})$ we have
\begin{align}\label{simplificationsepid}
\begin{split}
& \left[ \Gamma^{A_1, \ldots, A_n}(\phi)\right] (K_1, \ldots, K_{n-1}) \\
& \ \ \ \ = \left[ \Gamma^{A_1, \ldots, A_j}(\phi_1)\right] (K_1, \ldots, K_{j-1}) \left[ \Gamma^{A_j, \ldots, A_n}(\phi_2)\right] (K_j, \ldots, K_{n-1}).
\end{split}
\end{align}
\end{lemma}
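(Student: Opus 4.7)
The plan is to mirror the proof of Lemma \ref{simplification1}: first establish \eqref{simplificationsepid} in the Hilbert--Schmidt case $p = 2$ by reducing, via $w^*$-continuity, to elementary tensors where \eqref{MOItensor} applies directly, then transfer to general $1 \leq p < \infty$ by density of $\mathcal{S}^2(\mathcal{H}) \cap \mathcal{S}^p(\mathcal{H})$ in $\mathcal{S}^p(\mathcal{H})$.

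For elementary tensors $\phi_1 = f_1 \otimes \cdots \otimes f_j$ and $\phi_2 = g_j \otimes \cdots \otimes g_n$ with $p = 2$, the product is the elementary tensor $\phi = f_1 \otimes \cdots \otimes f_{j-1} \otimes (f_j g_j) \otimes g_{j+1} \otimes \cdots \otimes g_n$, and \eqref{MOItensor} shows that both sides of \eqref{simplificationsepid} equal
$$f_1(A_1) K_1 \cdots K_{j-1} f_j(A_j) g_j(A_j) K_j g_{j+1}(A_{j+1}) K_{j+1} \cdots K_{n-1} g_n(A_n).$$
Bilinearity extends the identity to finite sums of elementary tensors. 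To remove the tensor restriction, I fix nets of tensor sums $(\phi_{1,s})_s$ and $(\phi_{2,t})_t$ that $w^*$-converge to $\phi_1$ and $\phi_2$, respectively. For $\phi_{1,s}$ fixed, I show that $(\phi_{1,s}\,\phi_{2,t})_t$ $w^*$-converges in $L^\infty(\lambda_{A_1} \times \cdots \times \lambda_{A_n})$ to $\phi_{1,s}\,\phi_2$: pairing with an arbitrary $g \in L^1(\lambda_{A_1} \times \cdots \times \lambda_{A_n})$ and applying Fubini's theorem to integrate out the variables $x_1, \ldots, x_{j-1}$ absent from $\phi_2$, the pairing becomes an integral of $\phi_{2,t}$ against a function $\psi_s \in L^1(\lambda_{A_j} \times \cdots \times \lambda_{A_n})$, whose integrability follows from $\|\phi_{1,s}\|_\infty < \infty$ and Fubini. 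The $w^*$-continuity of $\Gamma^{A_1, \ldots, A_n}$ and of $\Gamma^{A_j, \ldots, A_n}$ then transports the identity from $(\phi_{1,s}, \phi_{2,t})$ to $(\phi_{1,s}, \phi_2)$. A symmetric Fubini step, integrating out $x_{j+1}, \ldots, x_n$, passes the identity through the limit in $s$ and establishes \eqref{simplificationsepid} for general $\phi_1, \phi_2$ when $p = 2$.

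For $1 \leq p < \infty$ and $K_1, \ldots, K_{n-1} \in \mathcal{S}^2(\mathcal{H}) \cap \mathcal{S}^p(\mathcal{H})$, the $p = 2$ identity, combined with H\"older's inequality for Schatten norms $\|XY\|_p \leq \|X\|_p \|Y\|_p$ and the boundedness hypotheses, gives
$$\bigl\|[\Gamma^{A_1, \ldots, A_n}(\phi)](K_1, \ldots, K_{n-1})\bigr\|_p \leq \|\Gamma^{A_1, \ldots, A_j}(\phi_1)\|_{\mathcal{B}_{j-1}(\mathcal{S}^p)} \|\Gamma^{A_j, \ldots, A_n}(\phi_2)\|_{\mathcal{B}_{n-j}(\mathcal{S}^p)} \prod_{i=1}^{n-1} \|K_i\|_p.$$
Density of $\mathcal{S}^2(\mathcal{H}) \cap \mathcal{S}^p(\mathcal{H})$ in $\mathcal{S}^p(\mathcal{H})$ yields $\Gamma^{A_1, \ldots, A_n}(\phi) \in \mathcal{B}_{n-1}(\mathcal{S}^p(\mathcal{H}))$, and Remark \ref{Ineqmulti}, applied to both sides of \eqref{simplificationsepid}, extends the identity to arbitrary $K_i \in \mathcal{S}^p(\mathcal{H})$.

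The main technical obstacle is the sequential $w^*$-limit: because $x_j$ is shared between $\phi_1$ and $\phi_2$, the product $\phi$ is not an external tensor of two independent functions, and no tensor-product continuity of $\Gamma^{A_1, \ldots, A_n}$ applies directly. The Fubini reduction, which converts the pairing in $L^\infty(\lambda_{A_1} \times \cdots \times \lambda_{A_n})$ into one against an $L^1$ function of only the variables occurring in $\phi_2$ (and symmetrically for the second limit), is the device that legitimizes the iterated $w^*$-limits.
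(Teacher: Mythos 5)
Your proposal is correct and follows essentially the same route as the paper: verification on elementary tensors via \eqref{MOItensor}, an iterated $w^*$-limit over nets of tensor sums (with the Fubini pairing argument justifying $w^*$-convergence of the products, which the paper only spells out in the proof of Lemma \ref{simplification1} and invokes implicitly here), and a final density argument with the H\"older-type bound $\|XY\|_p \leq \|X\|_p\|Y\|_p$ to pass from $\mathcal{S}^2(\mathcal{H}) \cap \mathcal{S}^p(\mathcal{H})$ to all of $\mathcal{S}^p(\mathcal{H})$. No gaps.
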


\begin{proof}
Assume first that $p=2$. In the case when $\phi_1$ and $\phi_2$ are elementary tensors, it is straightforward to check the identity \eqref{simplificationsepid}. In the general case, we let $(\phi_{1,s})_{s\in S} \subset L^{\infty}(\lambda_{A_1}) \otimes \cdots \otimes L^{\infty}(\lambda_{A_j})$ and $(\phi_{2,t})_{t\in T} \subset L^{\infty}(\lambda_{A_j}) \otimes \cdots \otimes L^{\infty}(\lambda_{A_n})$ be two nets converging to $\phi_1$ and $\phi_2$, respectively for the $w^*$-topology of $L^{\infty}(\lambda_{A_1} \times \cdots \times \lambda_{A_j})$ and for the $w^*$-topology of $L^{\infty}(\lambda_{A_j} \times \cdots \times \lambda_{A_n})$. For any $s \in S$ and any $t\in T$, we have
\begin{align}\label{simplificationsepid2}
\begin{split}
& \left[ \Gamma^{A_1, \ldots, A_n}(\phi_{1,s} \phi_{2,t} \right] (K_1, \ldots, K_{n-1}) \\
& \ \ \ \ = \left[ \Gamma^{A_1, \ldots, A_j}(\phi_{1,s})\right] (K_1, \ldots, K_{j-1}) \left[ \Gamma^{A_j, \ldots, A_n}(\phi_{2,t})\right] (K_j, \ldots, K_{n-1}).
\end{split}
\end{align}
For a fixed $s\in S$, $(\phi_{1,s} \phi_{2,t})_{t\in T}$ converges to $\phi_{1,s} \phi_2$ and $(\phi_{1,s} \phi_2)_{s\in S}$ converges to $\phi = \phi_1 \phi_2$ for the $w^*$-topology of $L^{\infty}(\lambda_{A_1} \times \cdots \times \lambda_{A_n})$. Hence by taking the limit on $t\in T$ and then on $s\in S$ in \eqref{simplificationsepid2}, we get \eqref{simplificationsepid}.

Now let $1 \leq p <\infty$ and $K_1, \ldots, K_{n-1} \in \mathcal{S}^2(\mathcal{H}) \cap \mathcal{S}^p(\mathcal{H})$. Then equality \eqref{simplificationsepid} holds and by assumption, there exist $A_p, B_p >0$ such that
\begin{align*}
\begin{split}
& \| \left[ \Gamma^{A_1, \ldots, A_n}(\phi)\right] (K_1, \ldots, K_{n-1})\|_p \\
& \leq \| \left[ \Gamma^{A_1, \ldots, A_j}(\phi_1)\right] (K_1, \ldots, K_{j-1}) \|_p \| \left[ \Gamma^{A_j, \ldots, A_n}(\phi_2)\right] (K_j, \ldots, K_{n-1}) \|_p \\
& \leq A_p \|K_1\|_p \ldots \|K_{j-1}\|_p ~ B_p \|K_j \|_p \ldots \|K_{n+1}\|_p,
\end{split}
\end{align*}
which shows that $\Gamma^{A_1, \ldots, A_n}(\phi) \in \mathcal{B}_{n-1}(\mathcal{S}^p(\mathcal{H}))$. Finally, we deduce \eqref{simplificationsepid} by approximation like in the proof of Lemma $\ref{simplification1}$.

\end{proof}

\begin{lemma}\label{simplification2}
Let $1 \leq p < \infty$. Let $n \geq 2$ and $1\leq j \leq n$. Let $A_1, \ldots, A_n$ be selfadjoint operators in $\mathcal{H}$. Let $\phi \in L^{\infty}(\lambda_{A_1} \times \cdots \times \lambda_{A_{j-1}} \times \lambda_{A_{j+1}} \times \cdots \times \lambda_{A_n})$ and assume, if $n\geq 3$, that
$$
\Gamma^{A_1, \ldots, A_{j-1}, A_{j+1}, \ldots, A_n}(\phi) \in \mathcal{B}_{n-2}(\mathcal{S}^p(\mathcal{H})).
$$
We define $\widetilde{\phi} \in L^{\infty}(\lambda_{A_1} \times \cdots \times \lambda_{A_n})$ by
\begin{equation}\label{fortilde2}
\widetilde{\phi}(x_1, \ldots, x_n) = \phi(x_1, \ldots, x_{j-1}, x_{j+1}, \ldots, x_n)
\end{equation}
a.e. on $\sigma(A_1) \times \cdots \times \sigma(A_n)$. Then
$$
\Gamma^{A_1, \ldots, A_n}(\widetilde{\phi}) \in \mathcal{B}_{n-1}(\mathcal{S}^p(\mathcal{H}))
$$
and for any $K_1, \ldots, K_{n-1} \in \mathcal{S}^p(\mathcal{H})$, we have
\begin{enumerate}
\item[(i)] If $2\leq j \leq n-1$,
\begin{align*}
& \left[ \Gamma^{A_1, \ldots, A_n}(\widetilde{\phi})\right] (K_1, \ldots, K_{n-1}) \\
& \ \ \ \ \ \ \ \ \ = \left[ \Gamma^{A_1, \ldots, A_{j-1}, A_{j+1}, \ldots, A_n}(\phi)\right] \left(K_1, \ldots, K_{j-2}, K_{j-1}K_j, K_{j+1}, \ldots, K_{n-1}\right).
\end{align*}
\item[(ii)] If $j=1$,
\begin{align*}
\left[ \Gamma^{A_1, \ldots, A_n}(\widetilde{\phi})\right] (K_1, \ldots, K_{n-1}) = K_1\left[ \Gamma^{A_2, \ldots, A_n}(\phi)\right] \left(K_2, \ldots, K_{n-1}\right).
\end{align*}
\item[(iii)] If $j=n$,
\begin{align*}
\left[ \Gamma^{A_1, \ldots, A_n}(\widetilde{\phi})\right] (K_1, \ldots, K_{n-1}) = \left[ \Gamma^{A_1, \ldots, A_{n-1}}(\phi)\right] \left(K_1, \ldots, K_{n-2}\right) K_{n-1}.
\end{align*}
\end{enumerate}
\end{lemma}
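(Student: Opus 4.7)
The proof will parallel those of Lemma \ref{simplification1} and Lemma \ref{simplificationsep}: I will first treat $p=2$ by reducing to elementary tensor functions via $w^*$-density, then transfer to $1 \leq p < \infty$ by approximating in $\mathcal{S}^p(\mathcal{H})$.

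For $p=2$, I begin with $\phi = f_1 \otimes \cdots \otimes f_{j-1} \otimes f_{j+1} \otimes \cdots \otimes f_n$ an elementary tensor missing the $j$th slot, so that $\widetilde{\phi} = f_1 \otimes \cdots \otimes f_{j-1} \otimes \mathbf{1} \otimes f_{j+1} \otimes \cdots \otimes f_n$ with the constant function $\mathbf{1}$ at position $j$. Since $\mathbf{1}(A_j) = I_{\mathcal{H}}$, formula \eqref{MOItensor} immediately gives, in case (i),
$$
[\Gamma^{A_1,\ldots,A_n}(\widetilde{\phi})](K_1,\ldots,K_{n-1}) = f_1(A_1) K_1 \cdots f_{j-1}(A_{j-1}) (K_{j-1} K_j) f_{j+1}(A_{j+1}) \cdots K_{n-1} f_n(A_n),
$$
which equals the right-hand side of (i) by reapplying \eqref{MOItensor} to $\Gamma^{A_1,\ldots,A_{j-1},A_{j+1},\ldots,A_n}(\phi)$. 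Cases (ii) and (iii) are analogous: $\mathbf{1}(A_1) = I$ (resp.\ $\mathbf{1}(A_n) = I$) causes $K_1$ (resp.\ $K_{n-1}$) to factor out on the left (resp.\ right). By linearity in $\phi$, the identity extends to any $\phi$ in the algebraic tensor product $L^\infty(\lambda_{A_1}) \otimes \cdots \otimes L^\infty(\lambda_{A_{j-1}}) \otimes L^\infty(\lambda_{A_{j+1}}) \otimes \cdots \otimes L^\infty(\lambda_{A_n})$.

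For arbitrary $\phi$, I will pick a net $(\phi_s)$ from this tensor product converging $w^*$ to $\phi$, and verify that $\widetilde{\phi_s} \to \widetilde{\phi}$ in the $w^*$-topology of $L^\infty\!\left(\prod_{i=1}^n \lambda_{A_i}\right)$. This follows by Fubini's theorem, just as in the proof of Lemma \ref{simplification1}: given $g \in L^1(\prod_{i=1}^n \lambda_{A_i})$, the function $\tilde g(x_1, \ldots, x_{j-1}, x_{j+1}, \ldots, x_n) := \int g(x_1, \ldots, x_n)\, d\lambda_{A_j}(x_j)$ lies in $L^1$ of the remaining variables with $\|\tilde g\|_1 \leq \|g\|_1$, and $\int \widetilde{\phi_s}\, g = \int \phi_s \tilde g \to \int \phi \tilde g = \int \widetilde{\phi}\, g$. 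Applying the $w^*$-continuity of $\Gamma^{A_1,\ldots,A_n}$ on the left and of $\Gamma^{A_1,\ldots,A_{j-1},A_{j+1},\ldots,A_n}$ on the right (replaced by trivial left/right multiplication in cases (ii)--(iii)) transfers the identity to arbitrary $\phi$.

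Finally, for $1 \leq p < \infty$, I will fix $K_1, \ldots, K_{n-1} \in \mathcal{S}^2(\mathcal{H}) \cap \mathcal{S}^p(\mathcal{H})$, so that the $p=2$ identity holds. The $\mathcal{S}^p$-norm of the right-hand side is controlled by a constant times $\prod_{i=1}^{n-1} \|K_i\|_p$ using the hypothesis $\Gamma^{A_1,\ldots,A_{j-1},A_{j+1},\ldots,A_n}(\phi) \in \mathcal{B}_{n-2}(\mathcal{S}^p(\mathcal{H}))$, together with the Hölder-type bound $\|K_{j-1} K_j\|_p \leq \|K_{j-1}\|_p \|K_j\|_\infty \leq \|K_{j-1}\|_p \|K_j\|_p$ in case (i), and direct bounds in cases (ii) and (iii). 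This gives $\Gamma^{A_1,\ldots,A_n}(\widetilde{\phi}) \in \mathcal{B}_{n-1}(\mathcal{S}^p(\mathcal{H}))$, and the identity extends to arbitrary $K_i \in \mathcal{S}^p(\mathcal{H})$ by density of $\mathcal{S}^2 \cap \mathcal{S}^p$ together with Remark \ref{Ineqmulti}, exactly as at the end of the proof of Lemma \ref{simplification1}. The only real technical point is the Fubini verification that $\widetilde{\phi_s} \to \widetilde{\phi}$ in the $w^*$-topology; everything else is routine given the tools already developed in Section \ref{MOI}.
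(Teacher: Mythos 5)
Your proposal is correct and follows essentially the same route as the paper's own proof: reduce to elementary tensors via \eqref{MOItensor} (with the constant function $\mathbf{1}$ inserted in the $j$th slot), extend by linearity and $w^*$-continuity along a net, and pass from $p=2$ to general $p$ by density of $\mathcal{S}^2(\mathcal{H})\cap\mathcal{S}^p(\mathcal{H})$. The only difference is cosmetic: you spell out the Fubini argument for $\widetilde{\phi_s}\to\widetilde{\phi}$ that the paper dismisses as ``easy to see.''
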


\begin{proof}
We only prove $(i)$, in the case when $n\geq 3$. The case $n=2$ and the second and third claims can be proved similarly. Assume that $2\leq j \leq n-1$. We first assume that $p=2$. If $\phi = f_1 \otimes \cdots \otimes f_{j-1} \otimes f_{j+1} \otimes \cdots \otimes f_n, f_i \in L^{\infty}(\lambda_{A_i}), 1\leq i\neq j \leq n$, we have
\begin{equation*}
\widetilde{\phi} = f_1 \otimes \cdots \otimes f_{j-1} \otimes 1 \otimes f_{j+1} \otimes \cdots \otimes f_n
\end{equation*}
so that
\begin{align*}
& \left[ \Gamma^{A_1, \ldots, A_n}(\widetilde{\phi})\right] (K_1, \ldots, K_{n-1}) \\
&  = f_1(A_1)K_1 \ldots K_{j-2}f_{j-1}(A_{j-1}) K_{j-1} K_j f_{j+1}(A_{j+1}) K_{j+1} \ldots K_{n-1}f_n(A_n) \\
& = \left[ \Gamma^{A_1, \ldots, A_{j-1}, A_{j+1}, \ldots, A_n}(\phi)\right] \left(K_1, \ldots, K_{j-1}, K_j K_{j+1}, K_{j+1}, \ldots, K_{n-1}\right).
\end{align*}
By linearity, this formula holds true whenever $\phi \in L^{\infty}(\lambda_{A_1}) \otimes \cdots L^{\infty}(\lambda_{A_{j-1}}) \otimes L^{\infty}(\lambda_{A_{j+1}}) \otimes \cdots \otimes L^{\infty}(\lambda_{A_n})$.

In the general case, we let $(\phi_s)_{s\in S} \subset L^{\infty}(\lambda_{A_1}) \otimes \cdots L^{\infty}(\lambda_{A_{j-1}}) \otimes L^{\infty}(\lambda_{A_{j+1}}) \otimes \cdots \otimes L^{\infty}(\lambda_{A_n})$ to be a net converging to $\phi$ for the $w^*$-topology of $L^{\infty}(\lambda_{A_1} \times \cdots \times \lambda_{A_{j-1}} \times \lambda_{A_{j+1}} \times \cdots \times \lambda_{A_n})$. For any $s\in S$, we define $\widetilde{\phi_s}$ as in \eqref{fortilde2}. Then, it is easy to see that $(\widetilde{\phi_s})_{s\in S}$ converges to $\widetilde{\phi}$ for the $w^*$-topology of $L^{\infty}(\lambda_{A_1} \times \cdots \times \lambda_{A_n})$. We conclude using the $w^*$-continuity of multiple operator integral like in the proof of Lemma $\ref{simplification1}$.

In the case when $1\leq p < \infty$, we argue as in the end of the proof of Lemma $\ref{simplification1}$. Details are left to the reader.

\end{proof}

\subsection{Higher order perturbation formula}\label{HOPF}

In this section, we first extend an important result on boundedness of mutiple operator integrals asssociated to divided differences $f^{[n]}$ in the case when $f$ is $n$-times differentiable with bounded $n$th derivative $f^{(n)}$. This will justify that all the operators appearing in the sequel are well-defined. Secondly, we will prove a higher order perturbation formula for differences of multiple operator integrals.\\

Let us recall the definition of the divided differences. Let $f : \mathbb{R} \to \mathbb{C}$ be differentiable. The divided
difference of the first order $f^{[1]}\colon\mathbb{R}^2\to\mathbb{C}$
is defined by
\begin{align*}
{f^{[1]} (x_0,x_1)} :=
\begin{cases}\frac
{ f(x_0) - f(x_1)}{x_0-x_1}, & \text{if}\ x_0
\neq x_1 \\
f'(x_0) & \text{if}\ x_0=x_1
\end{cases}, \qquad x_0, x_1\in\mathbb{R}.
\end{align*}
If $f'$ is bounded then $f^{[1]}$ is a bounded Borel function on $\mathbb{R}^2$ and
if in addition $f'$ is continuous, then $f^{[1]}\in C_b(\mathbb{R}^2)$.

If $n\geq 2$ and $f$ is $n$-times differentiable on $\mathbb{R}$, the divided difference of the $n$th
order $f^{[n]}\colon\mathbb{R}^{n+1}\to\mathbb{C}$
is defined recursively by
\begin{align*}
{f^{[n]} (x_0,x_1,\ldots,x_n)} :=
\begin{cases}\frac
{f^{[n-1]}(x_0,x_2,\ldots,x_n) - f^{[n-1]}(x_1, x_2 \ldots,x_n)}{x_0-x_1}, & \text{if}\ x_0
\neq x_1 \\
\partial_1 f^{[n-1]}(x_1,x_2,\ldots,x_n) & \text{if}\ x_0=x_1
\end{cases},
\end{align*}
for all $x_0, \ldots, x_n \in \mathbb{R}$, where $\partial_i$ stands for the partial derivative with respect to the $i$-th variable. If $f^{(n)}$ is bounded then $f^{[n]}$ is a bounded Borel function on $\mathbb{R}^{n+1}$ and if in addition $f^{(n)}$ is continuous, then $f^{[n]}\in C_b(\mathbb{R}^{n+1})$.

It is well-known that $f^{[n]}$ is symmetric under permutation of its arguments. Therefore, for all $1 \leq i \leq n$ and for all $x_0, \ldots, x_n \in \mathbb{R}$,
\begin{align*}
&f^{[n]} (x_0,x_1,\ldots,x_n) \\
& \ \ \ \ = \dfrac{f^{[n-1]}(x_0,\ldots,x_{i-1}, x_{i+1}, \ldots,x_n) - f^{[n-1]}(x_0,\ldots,x_{i-2}. x_{i},x_{i+1},\ldots,x_n)}{x_{i-1}-x_i}
\end{align*}
 if $x_{i-1} \neq x_i$ and
$$
f^{[n]} (x_0,x_1,\ldots,x_n) = \partial_i f^{[n-1]}(x_1,\ldots,x_n)
$$
if $x_{i-1}=x_i.$

Let $n \in \mathbb{N}, n\geq 1$. For a bounded Borel function $g$ on $\mathbb{R}$, we define, for any $x_0, \ldots, x_n \in \mathbb{R}$,
$$
\varphi_{n,g}(x_0, \ldots, x_n) = \int_{R_n} g\left( \sum_{j=0}^n s_j x_j \right) \text{d}\lambda_n (s_1, \ldots, s_n),
$$
where
$$
R_n = \left\lbrace (s_1,\ldots, s_n) \in \mathbb{R}^n : \ \sum_{j=1}^n s_j \leq 1, s_j \geq 0, 1\leq j \leq n \right\rbrace,
$$
$s_0=1-\sum_{j=1}^n s_j$ and $\lambda_n$ is the Lebesgue measure on $\mathbb{R}^n$.

Let $f$ be $n$-times differentiable on $\mathbb{R}$ with $f^{(n)}$ bounded. Then we have
\begin{equation}\label{DDint}
f^{[n]} = \varphi_{n,f^{(n)}}.
\end{equation}
This follows e.g. from \cite[Formula (7.12)]{DVL}.\\

In the sequel, we will work with selfadjoint operators $A_1, A_2, \ldots, A_n, n\in \mathbb{N}, n\geq 2$. If $\psi : \mathbb{R}^n \rightarrow \mathbb{C}$ is a bounded Borel function, let $\tilde{\psi}$ be the class of the restriction $\psi_{|\sigma(A_1) \times \sigma(A_2) \times \cdots \times \sigma(A_n)}$ in $L^{\infty}\left(\prod_{i=1}^n
\lambda_{A_i}\right)$. Then, we will denote by $\Gamma^{A_1,A_2, \ldots, A_n}(\psi)$ the multiple operator integral $\Gamma^{A_1,A_2, \ldots, A_n}(\tilde{\psi})$.

\vspace{0.5cm}

\begin{theorem}\label{BddnessMOI}
Let $1 < p < \infty$, $n\in\mathbb{N}, n\geq 1$, $f$ be $n$-times differentiable on $\mathbb{R}$ with $f^{(n)}$ bounded. Let $A_1, \ldots, A_{n+1}$ be selfadjoint operators in $\mathcal{H}$. Then $
\Gamma^{A_1,A_2, \ldots, A_{n+1}}(f^{[n]}) \in \mathcal{B}_n(\mathcal{S}^{pn} \times \cdots \times \mathcal{S}^{pn},\mathcal{S}^p)
$
and there exists $c_{p,n} > 0$ depending only on $p$ and $n$ such that for any $X_1,  \ldots, X_n \in \mathcal{S}^{np}(\mathcal{H})$,
\begin{equation}\label{IneqDDn}
\| \left[\Gamma^{A_1,A_2, \ldots, A_{n+1}}(f^{[n]})\right](X_1,\ldots,X_n) \|_p \leq c_{p,n} \|f^{(n)}\|_{\infty}\|X_1\|_{np} \cdots \|X_n\|_{np}.
\end{equation}
In particular $\Gamma^{A_1,A_2, \ldots, A_{n+1}}(f^{[n]}) \in \mathcal{B}_n(\mathcal{S}^p(\mathcal{H}))$ with
\begin{equation}\label{IneqDDn2}
\| \Gamma^{A_1,A_2, \ldots, A_{n+1}}(f^{[n]}) \|_{\mathcal{B}_n(\mathcal{S}^p)} \leq c_{p,n} \|f^{(n)}\|_{\infty}.
\end{equation}
\end{theorem}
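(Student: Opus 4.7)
The strategy is to combine the integral representation $f^{[n]} = \varphi_{n,f^{(n)}}$ from \eqref{DDint} with a $w^*$-approximation argument based on Lemma \ref{LemmeUB}. I will first recall the analogous bound for $f \in C^n(\mathbb{R})$ (available in the literature), and then deduce the general case by approximating the bounded Borel function $g := f^{(n)}$ by a sequence of continuous functions with the same $L^\infty$-bound.

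\textbf{Smooth case.} For $f\in C^n(\mathbb{R})$ with $\|f^{(n)}\|_\infty<\infty$, the divided difference $f^{[n]}$ lies in $C_b(\mathbb{R}^{n+1})$, and inequality \eqref{IneqDDn} is essentially the content of \cite{PSS-SSF} (see also \cite{LMS}); it can alternatively be derived from the Potapov--Sukochev $\mathcal{S}^p$-boundedness of double operator integrals of $f^{[1]}$, combined with \eqref{DDint} and the multilinear H\"older inequality in Schatten norms. This yields a constant $c_{p,n}$ depending only on $p$ and $n$ such that, for any selfadjoint operators $A_1,\ldots,A_{n+1}$ in $\mathcal{H}$,
\[
\|\Gamma^{A_1,\ldots,A_{n+1}}(f^{[n]})\|_{\mathcal{B}_n(\mathcal{S}^{np}\times\cdots\times\mathcal{S}^{np},\mathcal{S}^p)} \leq c_{p,n}\|f^{(n)}\|_\infty.
\]

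\textbf{Approximation and passage to the limit.} For a general $f$ with bounded $g:=f^{(n)}$, let $\mu$ be the finite Borel measure on $\mathbb{R}$ given by $\mu:=\sum_{i=1}^{n+1}\lambda_{A_i}+\lambda|_{[-N,N]}$, where $\lambda$ denotes Lebesgue measure and $N>0$ is large enough that $[-N,N]\supset\bigcup_i\sigma(A_i)$. By Lusin's theorem applied to $\mu$, choose $(g_k)\subset C_b(\mathbb{R})$ with $\|g_k\|_\infty\leq\|g\|_\infty$ and $g_k\to g$ pointwise $\mu$-a.e. Let $f_k\in C^n(\mathbb{R})$ be an $n$-fold antiderivative of $g_k$; the smooth case above gives
\[
\sup_k\|\Gamma^{A_1,\ldots,A_{n+1}}(f_k^{[n]})\|_{\mathcal{B}_n(\mathcal{S}^{np}\times\cdots\times\mathcal{S}^{np},\mathcal{S}^p)}\leq c_{p,n}\|g\|_\infty.
\]
To apply Lemma \ref{LemmeUB} it suffices to show $f_k^{[n]}=\varphi_{n,g_k}\to\varphi_{n,g}=f^{[n]}$ in the $w^*$-topology of $L^\infty(\prod_i\lambda_{A_i})$, which I obtain via pointwise convergence $\prod_i\lambda_{A_i}$-a.e.\ together with the uniform bound $\|\varphi_{n,g_k}\|_\infty\leq\|g\|_\infty$. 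For $x=(x_0,\ldots,x_n)$ off the diagonal of $\mathbb{R}^{n+1}$, the affine map $s\mapsto L_s(x):=\sum_j s_j x_j$ on $R_n$ has absolutely continuous push-forward with respect to Lebesgue measure on $[\min_i x_i,\max_i x_i]$, so dominated convergence yields $\varphi_{n,g_k}(x)\to\varphi_{n,g}(x)$. For a diagonal point $x=(a,\ldots,a)$, $\varphi_{n,g_k}(x)=|R_n|\,g_k(a)\to|R_n|\,g(a)=\varphi_{n,g}(x)$ at every $a$ with $g_k(a)\to g(a)$, which covers all common atoms of the $\lambda_{A_i}$'s since these lie in the support of $\mu$. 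Lemma \ref{LemmeUB} therefore gives \eqref{IneqDDn}, and \eqref{IneqDDn2} follows at once from the continuous inclusion $\mathcal{S}^p(\mathcal{H})\hookrightarrow\mathcal{S}^{np}(\mathcal{H})$ with $\|X\|_{np}\leq\|X\|_p$.

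\textbf{Main obstacle.} The crux is arranging pointwise convergence $\varphi_{n,g_k}(x)\to\varphi_{n,g}(x)$ at $\prod_i\lambda_{A_i}$-almost every $x\in\mathbb{R}^{n+1}$: off the diagonal this follows easily from absolute continuity of the simplex push-forward, but the diagonal portion forces one to control the $g_k$'s at every common atom of the $\lambda_{A_i}$'s, which is precisely why the reference measure $\mu$ must dominate $\sum_i\lambda_{A_i}$ in addition to Lebesgue measure; without this enrichment, mutual singularity of the relevant push-forwards would obstruct a uniform choice of approximants.
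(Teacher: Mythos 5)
Your overall skeleton coincides with the paper's: invoke the $\mathcal{S}^{np}\times\cdots\times\mathcal{S}^{np}\to\mathcal{S}^p$ bound of \cite{PSS-SSF} (extended to distinct $A_i$ as in \cite{LMS}) for continuous $n$th derivatives, then pass to a general bounded $f^{(n)}$ by exhibiting continuous approximants $g_k$ with $\|g_k\|_\infty\le\|f^{(n)}\|_\infty$ and $\varphi_{n,g_k}\to\varphi_{n,f^{(n)}}$ in the $w^*$-topology, and conclude via Lemma \ref{LemmeUB}; the deduction of \eqref{IneqDDn2} from $\|\cdot\|_{np}\le\|\cdot\|_p$ is identical. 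Where you diverge is in the construction of the $g_k$. The paper takes $g_k(t)=k\bigl(f^{(n-1)}(t+1/k)-f^{(n-1)}(t)\bigr)$: these are automatically continuous, bounded by $\|f^{(n)}\|_\infty$ via the mean value theorem, and converge to $f^{(n)}$ at \emph{every} point of $\mathbb{R}$, so dominated convergence gives $\varphi_{n,g_k}\to\varphi_{n,f^{(n)}}$ everywhere on $\mathbb{R}^{n+1}$ and the $w^*$-convergence is immediate, with no case analysis on the diagonal versus off-diagonal and no dependence on the measures $\lambda_{A_i}$. Your Lusin-based route only yields $\mu$-a.e.\ convergence, which is why you are forced into the push-forward/atom analysis; that analysis is essentially correct, but it is work the difference-quotient trick avoids entirely.

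Two concrete defects. First, your reference measure is ill-defined in the stated generality: the $A_i$ are arbitrary (possibly unbounded) selfadjoint operators, so no $N$ with $[-N,N]\supset\bigcup_i\sigma(A_i)$ need exist. This is fixable — replace $\lambda|_{[-N,N]}$ by any finite measure equivalent to Lebesgue measure on all of $\mathbb{R}$, e.g.\ $e^{-x^2}\,dx$ — but as written the construction breaks precisely in the unbounded case, which the paper needs (Theorem \ref{FormulaSAunbdd}, Proposition \ref{simplification0}). Second, your aside that the smooth case ``can alternatively be derived from the Potapov--Sukochev $\mathcal{S}^p$-boundedness of double operator integrals of $f^{[1]}$, combined with \eqref{DDint} and the multilinear H\"older inequality'' is not credible: the higher-order bound is the main theorem of \cite{PSS-SSF} and does not reduce to the first-order case by H\"older-type arguments. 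Since you also cite \cite{PSS-SSF} directly, this remark is not load-bearing, but it should be deleted rather than left as an apparent justification.
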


\begin{proof}
Define, for any $k\geq 1$, $g_k(t) = k (f^{(n-1)}(t+1/k) - f^{(n-1)}(t)), t\in \mathbb{R}$. Then $(g_k)_{k\geq 1} \subset C(\mathbb{R})$ is pointwise convergent to $f^{(n)}$ and we have the inequality $|g_k| \leq \|f^{(n)}\|_{\infty}$. By \cite[Theorem 5.3]{PSS-SSF}, there exists a constant $c_{p,n} > 0$ depending only on $p$ and $n$ such that, for any $k\geq 1$,
\begin{align}\label{Approxder}
\| \Gamma^{A_1,A_2, \ldots, A_{n+1}}(\varphi_{n,g_k}) \|_{\mathcal{B}_n(\mathcal{S}^{pn} \times \cdots \times \mathcal{S}^{pn},\mathcal{S}^p)} \leq c_{p,n} \|g_k \|_{\infty} \leq c_{p,n} \|f^{(n)}\|_{\infty}.
\end{align}
The proof is given in the case when $A_1 = \cdots = A_{n+1}$ but the arguments from the proof of \cite[Theorem 2.2]{LMS} allow to extend the result in the case when $A_1, \ldots, A_{n+1}$ are distinct.

By Lebesgue's dominated convergence theorem, $\varphi_{n,g_k}$ is pointwise convergent to $\varphi_{n,f^{(n)}}$ on $\mathbb{R}^{n+1}$. Moreover, we have
$$
|\varphi_{n,g_k}| \leq \dfrac{\|g_k\|_{\infty}}{n!} \leq \dfrac{\|f^{(n)}\|_{\infty}}{n!}.
$$
Hence, using Lebesgue's dominated convergence theorem again, we get that $(\varphi_{n,g_k})_{k\geq 1}$ $w^*$-converges to $\varphi_{n,f^{(n)}} = f^{[n]}$ for the $w^*$-topology of $L^{\infty}(\lambda_{A_1} \times \cdots \times \lambda_{A_{n+1}})$. By Lemma $\ref{LemmeUB}$ and \eqref{Approxder} we deduce that
$$
\Gamma^{A_1,A_2, \ldots, A_{n+1}}(f^{[n]}) \in \mathcal{B}_n(\mathcal{S}^{pn} \times \cdots \times \mathcal{S}^{pn},\mathcal{S}^p)
$$
with $\| \Gamma^{A_1,A_2, \ldots, A_{n+1}}(f^{[n]}) \|_{\mathcal{B}_n(\mathcal{S}^{pn} \times \cdots \times \mathcal{S}^{pn},\mathcal{S}^p)} \leq c_{p,n} \|f^{(n)}\|_{\infty}$, from which we deduce inequality \eqref{IneqDDn}. Inequality \eqref{IneqDDn2} follows from the fact that $\|.\|_{pn} \leq \|.\|_p$.

\end{proof}

Let $1<p<\infty$. Let $A, K$ be selfadjoint operators in $\mathcal{H}$ with $K\in \mathcal{S}^p(\mathcal{H})$. A Lipschitz function $f : \mathbb{R} \to \mathbb{C}$ is operator-Lipschitz on $\mathcal{S}^p(\mathcal{H})$ according to \cite[Theorem 1]{PS-Lipschitz} and hence $f(A+K) - f(A) \in \mathcal{S}^p(\mathcal{H}).$ Moreover, we have the formula
$$
f(A+K) - f(A) = \left[ \Gamma^{A+K,A}(f^{[1]})\right](K),
$$
see for instance \cite[Theorem 7.4]{dPSW}.

We will prove a higher order counterpart of this result, which will allow us to express differences of multiple operator integrals of the form
$$
\left[ \Gamma^{A_1, \ldots A_{j-1}, B, A_j, \ldots, A_{n-1}}(f^{[n-1]}) - \Gamma^{A_1, \ldots A_{j-1}, A, A_j, \ldots, A_{n-1}}(f^{[n-1]})\right] (K_1, \ldots, K_{n-1})
$$
as a multiple operator integral associated to $f^{[n]}$, provided that $f^{(n-1)}$ and $f^{(n)}$ are bounded and $B-A \in \mathcal{S}^p(\mathcal{H})$.\\

In order to prove Proposition $\ref{simplification0}$ below, we will need the following fact. Let $B$ be a selfadjoint operator in $\mathcal{H}$. By a well-known result of Weyl-Von Neumann (see \cite[Theorem 38.1]{Conway}), there exist an operator $X \in \mathcal{S}^p(\mathcal{H})$, $(b_n)_n \subset{\mathbb{R}}$ and a Hilbertian basis $(e_n)_n$ of $\mathcal{H}$ such that
$$
B = \sum_{n=1}^{\infty} b_n \left\langle e_n, \cdot \right\rangle e_n + X.
$$
For any $i\geq 1$, we let $P_i$ to be the orthogonal projection onto $\text{Span}\left\lbrace e_l, 1\leq l \leq i \right\rbrace.$ $P_i$ is a finite rank projection and $(P_i)_i$ converges strongly to the identity on $\mathcal{H}$. Moreover, we have
$$
BP_i - P_iB = XP_i - P_iX
$$
which converges to $0$ in $\mathcal{S}^p(\mathcal{H})$ because $X \in \mathcal{S}^p(\mathcal{H})$.

A similar statement holds for unitary operators, and even for normal operators, see \cite{Berg}.\\

Note that the following result was proved in \cite[Lemma 3.10]{LMS} in the case when $f^{(n)}$ is continuous, whose proof consists in approximating $f^{[n]}$ in the particular case $p=2$, and then deducing the result for $1 < p < \infty$ from this case. The formula in the general case below is new. Its proof rests on algebraic properties of divided differences and multiple operator integrals.

\begin{proposition}\label{simplification0}
Let $1 < p < \infty$, $n \in \mathbb{N}, n \geq 2$. Let $A_1, \ldots, A_{n-1}, A, B$ be selfadjoint operators in $\mathcal{H}$ such that $B-A \in \mathcal{S}^p(\mathcal{H})$. Let $f$ be $n$-times differentiable on $\mathbb{R}$ such that $f^{(n-1)}$ and $f^{(n)}$ are bounded. Then, for any $K_1, \ldots, K_{n-1} \in \mathcal{S}^p(\mathcal{H})$ and any $1 \leq j \leq n$ we have
\begin{align*}
& \left[ \Gamma^{A_1, \ldots A_{j-1}, B, A_j, \ldots, A_{n-1}}(f^{[n-1]}) - \Gamma^{A_1, \ldots A_{j-1}, A, A_j, \ldots, A_{n-1}}(f^{[n-1]})\right] (K_1, \ldots, K_{n-1}) \\
& \ = \left[ \Gamma^{A_1, \ldots, A_{j-1}, B, A, A_j, \ldots, A_{n-1}}(f^{[n]})\right](K_1, \ldots, K_{j-1}, B-A, K_j, \ldots K_{n-1}).
\end{align*}
\end{proposition}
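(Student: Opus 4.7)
The plan is to derive the identity from the pointwise divided-difference recursion
\begin{align*}
& f^{[n-1]}(z_1, \ldots, z_{j-1}, y, z_{j+2}, \ldots, z_{n+1}) - f^{[n-1]}(z_1, \ldots, z_{j-1}, x, z_{j+2}, \ldots, z_{n+1}) \\
& \qquad = (y - x)\, f^{[n]}(z_1, \ldots, z_{j-1}, y, x, z_{j+2}, \ldots, z_{n+1}),
\end{align*}
which is immediate from the definition of $f^{[n]}$ together with its symmetry. Since both sides are pointwise bounded by $2\|f^{[n-1]}\|_\infty \leq 2\|f^{(n-1)}\|_\infty/(n-1)!$, this is an identity in $L^\infty$ of $\lambda_{A_1}\times\cdots\times\lambda_{A_{j-1}}\times\lambda_B\times\lambda_A\times\lambda_{A_j}\times\cdots\times\lambda_{A_{n-1}}$. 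I would then apply the $(n+1)$-operator integral $\Gamma^{A_1,\ldots,A_{j-1},B,A,A_j,\ldots,A_{n-1}}$ to both sides and evaluate at $(K_1,\ldots,K_{j-1},M,K_j,\ldots,K_{n-1})$ for a well-chosen finite-rank $M\in\mathcal{S}^p(\mathcal{H})$.

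On the left, the functions $\phi_B(z_1,\ldots,z_{n+1}) = f^{[n-1]}(z_1,\ldots,z_j,z_{j+2},\ldots,z_{n+1})$ and $\phi_A(z_1,\ldots,z_{n+1}) = f^{[n-1]}(z_1,\ldots,z_{j-1},z_{j+1},z_{j+2},\ldots,z_{n+1})$ are independent of $z_{j+1}$ and $z_j$ respectively; Lemma~\ref{simplification2} (case (i) for interior $j$, and cases (ii) or (iii) when $j=1$ or $j=n$) reduces each to an $n$-operator MOI, giving
\begin{align*}
& [\Gamma^{A_1,\ldots,A_{j-1},B,A_j,\ldots,A_{n-1}}(f^{[n-1]})](K_1,\ldots,K_{j-1},MK_j,K_{j+1},\ldots,K_{n-1}) \\
& \qquad - [\Gamma^{A_1,\ldots,A_{j-1},A,A_j,\ldots,A_{n-1}}(f^{[n-1]})](K_1,\ldots,K_{j-2},K_{j-1}M,K_j,\ldots,K_{n-1}).
\end{align*}
On the right, Lemma~\ref{simplification1} with $\phi_1 = f^{[n]}$ and $\phi_2(y,x) = y - x = (g\otimes 1 - 1\otimes g)(y,x)$ where $g(t) = t$ gives
\[
[\Gamma^{A_1,\ldots,A_{j-1},B,A,A_j,\ldots,A_{n-1}}(f^{[n]})](K_1,\ldots,K_{j-1},[\Gamma^{B,A}(\phi_2)](M),K_j,\ldots,K_{n-1}),
\]
and one reads off $[\Gamma^{B,A}(\phi_2)](M) = BM - MA$.

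Next, I would specialise $M = P_i$, the finite-rank projection from the Weyl--von Neumann decomposition of $B$ recalled just before the proposition. Since $P_i \to I$ strongly and $K_k \in \mathcal{S}^p$, one has $P_i K_j \to K_j$ and $K_{j-1} P_i \to K_{j-1}$ in $\mathcal{S}^p$, while
\[
BP_i - P_i A = (BP_i - P_i B) + P_i(B-A) = (XP_i - P_i X) + P_i(B-A) \xrightarrow[i\to\infty]{\mathcal{S}^p} B - A,
\]
the first bracket by the Weyl--von Neumann hint and the second because $B - A \in \mathcal{S}^p$. By the $\mathcal{S}^p$-boundedness of $\Gamma(f^{[n-1]})$ and $\Gamma(f^{[n]})$ (Theorem~\ref{BddnessMOI} at orders $n-1$ and $n$), passing to the limit in the intermediate identity with $M = P_i$ produces the desired equality.

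The main technical obstacle is that $\phi_2(y,x) = y - x$ lies in $L^\infty(\lambda_B \times \lambda_A)$ only when $\sigma(B)$ and $\sigma(A)$ are bounded, so Lemma~\ref{simplification1} cannot be applied to it verbatim for general unbounded $A, B$. The remedy is to replace $\phi_2$ by the bounded truncations $\psi_R(y,x) = \chi_R(y)\chi_R(x)(y-x)$ for a smooth cutoff $\chi_R$, apply Lemma~\ref{simplification1} to $f^{[n]}\cdot\widetilde{\psi_R}$, and let $R\to\infty$. This passage is legitimate because $M = P_i$ has finite-rank range inside the common domain $\mathrm{dom}(B) = \mathrm{dom}(A)$ (the domains coincide since $B - A$ is bounded), so the truncated factor $B\chi_R(B)P_i\chi_R(A) - \chi_R(B)P_iA\chi_R(A)$ converges in $\mathcal{S}^p$ to $BP_i - P_iA$, and continuity of the MOI of $f^{[n]}$ on $\mathcal{S}^p$ closes the argument.
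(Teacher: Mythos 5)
Your proposal is correct and follows essentially the same route as the paper's proof: the same divided-difference recursion, the same reduction via Lemma~\ref{simplification1} and Lemma~\ref{simplification2}, the same Weyl--von Neumann finite-rank projections $P_i$ to convert a commutator-type term $BX-XA$ into $B-A$, and the same spectral truncation to make the unbounded factor $y-x$ into an element of $L^{\infty}(\lambda_B\times\lambda_A)$. The only (immaterial) difference is the order of the limits: you fix $M=P_i$ and remove the cutoff $R\to\infty$ before letting $i\to\infty$, whereas the paper fixes the cutoff level $k$, lets $i\to\infty$ to obtain $p_k(B)(B-A)p_k(A)$, and only then lets $k\to\infty$.
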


\begin{proof}
Let $1 \leq j \leq n$. First note that we have the following equality: for any $(x_0, \ldots, x_n) \in \mathbb{R}^{n+1}$,
\begin{align}\label{eqDDsep}
\begin{split}
& f^{[n]}(x_0, \ldots, x_n).(x_{j-1} - x_j) \\
& \ \ \ = f^{[n-1]}(x_0,\ldots,x_{j-1}, x_{j+1}, \ldots,x_n) - f^{[n-1]}(x_0,\ldots,x_{j-2}, x_{j},x_{j+1},\ldots,x_n).
\end{split}
\end{align}
Let $k\geq 1$. Define $\phi_1 = f^{[n]}$, and for any $(x_0, \ldots, x_n) \in \mathbb{R}^{n+1}$,
$$
\phi_2(x_{j-1},x_j) = (x_{j-1} - x_j)\chi_{[-k,k]}(x_{j-1})\chi_{[-k,k]}(x_{j}),
$$
$$\psi_1(x_0, \ldots, x_n) = f^{[n-1]}(x_0,\ldots,x_{j-1}, x_{j+1}, \ldots,x_n)\chi_{[-k,k]}(x_{j-1})\chi_{[-k,k]}(x_{j})$$ and
$$\psi_2(x_0, \ldots, x_n) = f^{[n-1]}(x_0,\ldots,x_{j-2}, x_{j},x_{j+1},\ldots,x_n)\chi_{[-k,k]}(x_{j-1})\chi_{[-k,k]}(x_{j}).$$
Then $\phi_1, \psi_1, \psi_2 \in L^{\infty}(\lambda_{A_1} \times \cdots \times  \lambda_{A_{j-1}} \times \lambda_B \times \lambda_A \times \lambda_{A_j} \times \cdots \times \lambda_{A_n})$, $\phi_2 \in L^{\infty}(\lambda_B \times \lambda_A)$ and after multiplying equality \eqref{eqDDsep} by $\chi_{[-k,k]}(x_{j-1})\chi_{[-k,k]}(x_{j})$ we obtain
\begin{equation}\label{eqfunc}
\phi_1 \widetilde{\phi_2} = \psi_1 - \psi_2,
\end{equation}
where $\widetilde{\phi_2}$ was defined in \eqref{fortilde}.

Assume first that $2 \leq j \leq n-1$. Let $X, K_1, \ldots, K_{n-1} \in \mathcal{S}^p(\mathcal{H})$. Note that
$$
\left[\Gamma^{B,A}(\chi_{[-k,-k]} \otimes \chi_{[-k,-k]})\right](X) = p_k(B)Xp_k(A)
$$
and
$$
\left[\Gamma^{B,A}(\phi_2)\right](X) = Bp_k(B)Xp_k(A) - p_k(B)Xp_k(A)A,
$$
where $p_k = \chi_{[-k,k]}$.
Denote
$$\Gamma_A = \Gamma^{A_1, \ldots, A_{j-1}, A, A_j, \ldots, A_{n-1}}, \ \Gamma_B = \Gamma^{A_1, \ldots, A_{j-1}, B, A_j, \ldots, A_{n-1}}$$
and
$$\Gamma_{B,A} = \Gamma^{A_1, \ldots, A_{j-1}, B, A, A_j, \ldots, A_{n-1}}.$$
Applying the operator $\left[\Gamma_{B,A}(\cdot)\right](K_1, \ldots, K_{j-1}, X, K_j, \ldots K_{n-1})$ to \eqref{eqfunc} gives, by Lemma $\ref{simplification1}$ and Lemma $\ref{simplification2}$,
\begin{align}\label{eqfunc2}
\begin{split}
& \left[ \Gamma_{B,A}(f^{[n]})\right](K_1, \ldots, K_{j-1}, Bp_k(B)Xp_k(A) - p_k(B)Xp_k(A)A, K_j, \ldots, K_{n-1}) \\
& = \left[ \Gamma_B(f^{[n-1]})\right](K_1, \ldots, K_{j-1}, p_k(B)Xp_k(A) K_j, K_{j+1} \ldots, K_{n-1})\\
& \ \ \ \ - \left[ \Gamma_A(f^{[n-1]})\right](K_1, \ldots, K_{j-2}, p_k(B)Xp_k(A) K_{j-1}, K_j \ldots, K_{n-1}).
\end{split}
\end{align}

Let $(P_i)_k$ be an increasing sequence of finite rank projections converging strongly to the identity and such that
\begin{equation}\label{existpro}
BP_i - P_iB \underset{i \to \infty}{\longrightarrow} 0 \ \text{in} \ \mathcal{S}^p(\mathcal{H}).
\end{equation}
As explained before the statement of the Proposition, such sequence exists. We apply equality \eqref{eqfunc2} to $X=P_i$ and we obtain, for any $i\geq 1$,
\begin{align}\label{eqfunc3}
\begin{split}
& \left[ \Gamma_{B,A}(f^{[n]})\right](K_1, \ldots, K_{j-1}, Bp_k(B)P_ip_k(A) - p_k(B)P_ip_k(A)A, K_j, \ldots, K_{n-1}) \\
& = \left[ \Gamma_B(f^{[n-1]})\right](K_1, \ldots, K_{j-1}, p_k(B)P_ip_k(A) K_j, K_{j+1}, \ldots, K_{n-1})\\
& \ \ \ \ - \left[ \Gamma_A(f^{[n-1]})\right](K_1, \ldots, K_{j-2}, p_k(B)P_ip_k(A)K_{j-1}, K_j \ldots, K_{n-1}).
\end{split}
\end{align}
Note that for any $K \in \mathcal{S}^p(\mathcal{H}), KP_i \to K$ and $P_iK\to K$ in $\mathcal{S}^p(\mathcal{H})$, as $i$ goes to $\infty$. This implies that $p_k(B)P_ip_k(A) K_j \to p_k(B)p_k(A) K_j$ and that $p_k(B)P_ip_k(A)K_{j-1} \to p_k(B)p_k(A)K_{j-1}$ as $i$ goes to $\infty$. By continuity of multiple operator integrals stated in Theorem $\ref{BddnessMOI}$, this implies that the right-hand side of \eqref{eqfunc3} converges in $\mathcal{S}^p(\mathcal{H})$ to
\begin{align*}
&\left[ \Gamma_B (f^{[n-1]})\right](K_1, \ldots, K_{j-1}, p_k(B)p_k(A) K_j, K_{j+1}, \ldots, K_{n-1})\\
& - \left[ \Gamma_A (f^{[n-1]})\right](K_1, \ldots, K_{j-2}, p_k(B)p_k(A)K_{j-1}, K_j \ldots, K_{n-1}).
\end{align*}
Using the identity
$$
Bp_k(B)P_ip_k(A) - p_k(B)P_ip_k(A)A = p_k(B)(BP_i-P_iB)p_k(A) + p_k(B)P_i(B-A)p_k(A),
$$
we have, by \eqref{existpro}, that
$$
Bp_k(B)P_ip_k(A) - p_k(B)P_ip_k(A)A \to p_k(B)(B-A)p_k(A)
$$
in $\mathcal{S}^p(\mathcal{H})$, as $i$ goes to $\infty$. Hence, the left-hand side of \eqref{eqfunc3} converges in $\mathcal{S}^p(\mathcal{H})$ to
$$
\left[ \Gamma_{B,A}(f^{[n]})\right](K_1, \ldots, K_{j-1}, p_k(B)(B-A)p_k(A), K_j, \ldots, K_{n-1})
$$
and we proved that
\begin{align}\label{eqfunc4}
\begin{split}
& \left[ \Gamma_{B,A}(f^{[n]})\right](K_1, \ldots, K_{j-1}, p_k(B)(B-A)p_k(A), K_j, \ldots, K_{n-1}) \\
& = \left[ \Gamma_B(f^{[n-1]})\right](K_1, \ldots, K_{j-1}, p_k(B)p_k(A) K_j, K_{j+1}, \ldots, K_{n-1})\\
& \ \ \ \ - \left[ \Gamma_A(f^{[n-1]})\right](K_1, \ldots, K_{j-2}, p_k(B)p_k(A)K_{j-1}, K_j \ldots, K_{n-1}).
\end{split}
\end{align}
Finally, note that $(p_k(B))_{k\geq 1}$ and $(p_k(A))_{k\geq 1}$ converge strongly to the identity as $k$ goes to $\infty$ so $p_k(B)p_k(A) K_j \to K_j$ and $p_k(B)p_k(A)K_{j-1} \to K_{j-1}$ in $\mathcal{S}^p(\mathcal{H})$, as $k$ goes to $\infty$. By assumption, $B-A \in \mathcal{S}^p(\mathcal{H})$ so we have $p_k(B)(B-A)p_k(A) \to B-A$ as $k$ goes to $\infty$. Hence, taking the limit on $k$ in \eqref{eqfunc4} concludes the proof in the case when $2 \leq j \leq n-1$.\\

In the case when $j=1$, the right-hand side of \eqref{eqfunc2} is replaced by
\begin{align*}
& \left[ \Gamma_B (f^{[n-1]})\right](p_k(B)Xp_k(A)K_1, K_2, \ldots, K_{n-1})\\
& \ \ \ \ - p_k(B)Xp_k(A)\left[ \Gamma_A (f^{[n-1]})\right](K_1,\ldots, K_{n-1})
\end{align*}
and when $j=n$, the right-hand side is replaced by
\begin{align*}
& \left[ \Gamma_B (f^{[n-1]})\right](K_1, \ldots, K_{n-1})p_k(B)Xp_k(A)\\
& \ \ \ \ - \left[ \Gamma_A (f^{[n-1]})\right](K_1,\ldots, K_{n-2}, p_k(B)Xp_k(A)K_{n-1}).
\end{align*}
We then apply the same reasonning as before to obtain the result.

\end{proof}

\begin{remark}\label{approximatebdd}
In the latter, we used the projections $p_k$ to approximate the (possibly) unbounded operators $A$ and $B$ by bounded operators. In the case when $A$ and $B$ are bounded selfadjoint operators (without any assumption on the difference $B-A$), the latter proof shows that we have, for any $2\leq j \leq n-1$ and any $X\in \mathcal{S}^p(\mathcal{H})$,
\begin{align*}
& \left[ \Gamma^{A_1, \ldots, A_{j-1}, B, A, A_j, \ldots, A_{n-1}}(f^{[n]})\right](K_1, \ldots, K_{j-1}, BX - XA, K_j, \ldots, K_{n-1}) \\
& = \left[ \Gamma^{A_1, \ldots, A_{j-1}, B, A_j, \ldots, A_{n-1}}(f^{[n-1]})\right](K_1, \ldots, K_{j-1}, X K_j, K_{j+1} \ldots, K_{n-1})\\
& \ \ \ \ - \left[ \Gamma^{A_1, \ldots, A_{j-1}, A, A_j, \ldots, A_{n-1}}(f^{[n-1]})\right](K_1, \ldots, K_{j-2}, X K_{j-1}, K_j \ldots, K_{n-1}).
\end{align*}
When $j=1$, we have
\begin{align*}
& \left[ \Gamma^{A_1, \ldots, A_{j-1}, B, A, A_j, \ldots, A_{n-1}}(f^{[n]})\right](BX - XA, K_1, \ldots, K_{n-1}) \\
& = \left[ \Gamma^{A_1, \ldots, A_{j-1}, B, A_j, \ldots, A_{n-1}}(f^{[n-1]})\right](XK_1, K_2, \ldots, K_{n-1})\\
& \ \ \ \ - X\left[ \Gamma^{A_1, \ldots, A_{j-1}, A, A_j, \ldots, A_{n-1}}(f^{[n-1]})\right](K_1,\ldots, K_{n-1})
\end{align*}
and when $j=n$, we have
\begin{align*}
& \left[ \Gamma^{A_1, \ldots, A_{j-1}, B, A, A_j, \ldots, A_{n-1}}(f^{[n]})\right](K_1, \ldots, K_{n-1},BX - XA) \\
& = \left[ \Gamma^{A_1, \ldots, A_{j-1}, B, A_j, \ldots, A_{n-1}}(f^{[n-1]})\right](K_1, \ldots, K_{n-1})X\\
& \ \ \ \ - \left[ \Gamma^{A_1, \ldots, A_{j-1}, A, A_j, \ldots, A_{n-1}}(f^{[n-1]})\right](K_1,\ldots, K_{n-2}, X K_{n-1}).
\end{align*}
\end{remark}

\section{Differentiability of $t \mapsto f(A+tK) - f(A)$ in $\mathcal{S}^p(\mathcal{H})$}\label{Perturbation}

\subsection{Statements of the main results}\label{SecMR}

In this subsection, we state our main results on $\mathcal{S}^p$-differentiability of functions of operators.

The following generalizes the analogous result of \cite[Theorem 3.7 (ii)]{LMS} from $n$-times continuously differentiable $f$ to $n$-times differentiable functions $f$, with a proof of a completely different nature. It is also the $n$th order analogue of \cite[7.13]{KPSS}.

\begin{theorem}\label{FormulaSA}
Let $1 < p < \infty$, let $A$ and $K$ be bounded selfadjoint operators in $\mathcal{H}$ with $K\in \mathcal{S}^p(\mathcal{H})$. Let $n \in\N, n\geq 1$ and let $f$ be $n$-times differentiable on $\mathbb{R}$ such that $f^{(n)}$ is bounded. Consider the function
$$\varphi : t\in \mathbb{R} \mapsto f(A+tK) - f(A) \in \mathcal{S}^p(\mathcal{H}).$$
Then the function $\varphi$ belongs to $D^n(\mathbb{R},\Sc^p(\H))$ and for every integer $1 \leq k \leq n$,
\begin{equation}\label{diffenrential}
\dfrac{1}{k!} \varphi^{(k)}(t) = \left[ \Gamma^{A+tK, A+tK, \ldots, A+tK}(f^{[k]}) \right] (K, \ldots, K), t\in \mathbb{R}.
\end{equation}
In particular, for any $1\leq k \leq n-1$, $\varphi^{(k)}$ is bounded on any bounded interval of $\mathbb{R}$ and $\varphi^{(n)}$ is bounded on $\mathbb{R}$.
\end{theorem}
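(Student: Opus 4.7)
The plan is to argue by induction on $n$. The base case $n=1$ is exactly the theorem of \cite{KPSS} cited in the introduction: for $f$ differentiable with $f'$ bounded and $A$ a bounded selfadjoint operator, $\varphi$ is $\Sc^p$-differentiable with $\varphi'(t)=[\Gamma^{A+tK,A+tK}(f^{[1]})](K)$.

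For the inductive step, suppose the formula holds at order $n-1$. Although we only assume $f^{(n)}$ is bounded, the boundedness of $f^{(n-1)}$ needed to invoke the $(n-1)$st-order statement is harmless here: $A$ and $K$ being bounded, $\sigma(A+tK)$ stays in a common compact set as $t$ ranges over any bounded interval, and one may truncate $f$ outside such an interval without altering $\varphi$ on that interval. Fix $t_0\in\R$ and set
$$
\Psi(t) := \bigl[\Gamma^{A+tK,\ldots,A+tK}(f^{[n-1]})\bigr](K,\ldots,K),
$$
with $n$ copies of $A+tK$ and $n-1$ copies of $K$, so that $\tfrac{1}{(n-1)!}\varphi^{(n-1)}(t)=\Psi(t)$ by the inductive hypothesis.

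For $t\neq t_0$, introduce the telescoping quantity $T_j(t)$, $0\le j\le n$, defined like $\Psi(t)$ but with only the first $j$ operators equal to $A+tK$ and the remaining $n-j$ equal to $A+t_0K$, so that $T_n(t)=\Psi(t)$, $T_0(t)=\Psi(t_0)$, and $\Psi(t)-\Psi(t_0)=\sum_{j=1}^{n}(T_j(t)-T_{j-1}(t))$. For each $1\le j\le n$, the difference $T_j(t)-T_{j-1}(t)$ corresponds to changing the $j$th operator from $A+t_0K$ to $A+tK$, and since $(A+tK)-(A+t_0K)=(t-t_0)K\in\Sc^p(\H)$, Proposition \ref{simplification0} yields
$$
T_j(t) - T_{j-1}(t) = \bigl[\Gamma_j(t)\bigr]\bigl(K,\ldots,K,(t-t_0)K,K,\ldots,K\bigr),
$$
where $\Gamma_j(t):=\Gamma^{\ldots}(f^{[n]})$ is the MOI whose $n+1$ operators consist of $j$ copies of $A+tK$ followed by $n+1-j$ copies of $A+t_0K$, and $(t-t_0)K$ sits in the $j$th argument slot. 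Dividing by $t-t_0$ and summing,
$$
\frac{\Psi(t)-\Psi(t_0)}{t-t_0} = \sum_{j=1}^{n}\bigl[\Gamma_j(t)\bigr](K,\ldots,K).
$$

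The remaining step is to pass to the $\Sc^p$-limit as $t\to t_0$. Each of the $n$ summands should then equal $[\Gamma^{A+t_0K,\ldots,A+t_0K}(f^{[n]})](K,\ldots,K)$, which combined with the factor $(n-1)!$ yields the formula at order $n$. The stated boundedness of $\varphi^{(n)}$ on $\R$ (and of the lower-order derivatives on bounded intervals) then follows at once from Theorem \ref{BddnessMOI}, together with the Schatten inclusion $\|K\|_{np}\le\|K\|_p$.

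The main obstacle is precisely the justification of this passage to the limit. Since $f^{(n)}$ is only bounded, $f^{[n]}$ need not be continuous, so one cannot rely on a naive continuity argument for the MOI with respect to its underlying selfadjoint operators. Following the strategy announced in the introduction, I would first approximate $K$ in $\Sc^p(\H)$ by an operator whose finer structure permits rewriting $\Gamma_j(t)$ through Lemmas \ref{simplification1}--\ref{simplification2}, thereby reducing the continuity question to one about ordinary functional calculus of bounded selfadjoint operators, where the $\Sc^p$-convergence is more transparent. The conclusion for arbitrary $K\in\Sc^p(\H)$ is then transferred back by density, using the uniform $\Sc^p$-estimate of Theorem \ref{BddnessMOI} to control error terms and the lower-semicontinuity/weak-limit principle of Lemma \ref{LemmeUB} to identify the limit.
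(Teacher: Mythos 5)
Your setup --- induction on $n$, the telescoping decomposition of $\Psi(t)-\Psi(t_0)$, and the application of Proposition \ref{simplification0} to rewrite the difference quotient as $\sum_{j=1}^{n}[\Gamma_j(t)](K,\ldots,K)$ --- coincides exactly with the first half of the paper's argument. But the proof stops where the real work begins. You correctly identify that the passage to the limit $t\to t_0$ is the main obstacle (since $f^{[n]}$ need not be continuous, there is no general continuity of $t\mapsto \Gamma^{A+tK,\ldots}(f^{[n]})$ in its underlying operators), yet the resolution you offer --- ``approximate $K$ by an operator whose finer structure permits rewriting $\Gamma_j(t)$'' --- never says what that structure is, and that is precisely the content of the proof. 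The paper takes the dense class $X_0=\{i[A,Y]+Z : Y,Z\in(\mathcal{S}^p(\mathcal{H}))_{\mathrm{sa}},\ Z \text{ commutes with } A\}$ from \cite[Proposition 6.2]{KPSS}, replaces the line $A+tK$ by the path $\nu(t)=e^{-itY}(A+tZ)e^{itY}$ (Lemma \ref{simplification4}), and then splits $K=i[A,Y]+Z$ by multilinearity. The $Z$-only terms collapse via Lemma \ref{MOIcomm} to $\frac{1}{(n-1)!}\,\frac{f^{(n-1)}(A+tZ)-f^{(n-1)}(A)}{t}\,Z^{n-1}$, whose convergence is a first-order result from \cite{KS}; the terms containing a commutator slot $[A,Y]$ are converted, via the algebraic identity of Remark \ref{approximatebdd}, into \emph{differences of $f^{[n-1]}$-integrals}, whose continuity in $t$ does follow from the perturbation formula (Lemma \ref{simplification5}) because the difference carries an explicit factor of $t$. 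Without this commutator mechanism there is no known way to take the limit for merely bounded $f^{(n)}$.

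A secondary inaccuracy: the transfer from the dense class back to general $K$ is not accomplished by Lemma \ref{LemmeUB}, which concerns weak convergence under $w^*$-convergence of the \emph{symbols} with fixed operators; it cannot identify the norm limit of difference quotients when the perturbation $K$ varies. The correct tool is Lemma \ref{simplification3}, an $\epsilon$-argument that again relies on Proposition \ref{simplification0} together with the uniform bound of Theorem \ref{BddnessMOI} and Remark \ref{Ineqmulti} to compare the difference quotients for $K$ and for a nearby $K_0$ in the dense class. Your truncation remark justifying the use of the order-$(n-1)$ statement (so that $f^{(n-1)}$ may be assumed bounded on the relevant compact spectra) is fine, and the concluding boundedness assertions do follow from Theorem \ref{BddnessMOI} as you say; but as written the proposal is a correct outline with the central analytic step missing.
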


We have the same result for unbounded operators, provided that the derivatives of $f$ are bounded, to ensure the boundedness of multiple operator integrals.

\begin{theorem}\label{FormulaSAunbdd}
Let $1 < p < \infty$, $A$ and $K$ be selfadjoint operators in $\mathcal{H}$ with $K\in \mathcal{S}^p(\mathcal{H})$. Let $n \in\N, n\geq 1$ and let $f$ be $n$-times differentiable on $\mathbb{R}$ such that $f^{(i)}$ is bounded for all $1\leq i \leq n$. Consider the function
\begin{equation}\label{defofvarphi}
\varphi : t\in \mathbb{R} \mapsto f(A+tK) - f(A) \in \mathcal{S}^p(\mathcal{H}).
\end{equation}
Then $\varphi$ belongs to $D^n(\mathbb{R},\Sc^p(\H))$ and for every integer $1 \leq k \leq n$, $\varphi^{(k)}$ is bounded on $\mathbb{R}$ and given by
\begin{equation}\label{diffenrentialunbdd}
\dfrac{1}{k!} \varphi^{(k)}(t) = \left[ \Gamma^{A+tK, A+tK, \ldots, A+tK}(f^{[k]}) \right] (K, \ldots, K), t\in \mathbb{R}.
\end{equation}
\end{theorem}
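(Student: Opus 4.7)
The strategy is induction on $n$, where the inductive step invokes the higher order perturbation formula (Proposition \ref{simplification0}) to rewrite the difference quotient of $\varphi^{(n-1)}$ as a telescoping sum of multiple operator integrals of $f^{[n]}$.

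The base case $n=1$ reduces to \cite[Theorem 7.13]{KPSS} and \cite[Theorem 7.4]{dPSW}. For the inductive step, assume the result for $n-1$, so that $\varphi^{(n-1)}(t) = (n-1)! \, \Psi_{n-1}(t)$, where I write
$$\Psi_k(t) := \bigl[\Gamma^{A+tK,\ldots,A+tK}(f^{[k]})\bigr](K,\ldots,K), \qquad 1 \leq k \leq n.$$
I would fix $t\in\mathbb{R}$, $h\neq 0$ and expand $\Psi_{n-1}(t+h) - \Psi_{n-1}(t)$ as a telescoping sum over the $n$ operator slots of the underlying MOI, swapping one copy of $A+tK$ for $A+(t+h)K$ at each step. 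Each step is a difference of MOIs of $f^{[n-1]}$ in which only one operator slot changes, with the two operators differing by $hK \in \mathcal{S}^p(\mathcal{H})$. Proposition \ref{simplification0} then rewrites each such difference as a single MOI of $f^{[n]}$ with an extra slot holding $hK$; multilinearity factors out the $h$, yielding
$$\frac{\varphi^{(n-1)}(t+h)-\varphi^{(n-1)}(t)}{h} = (n-1)! \sum_{j=1}^{n} \bigl[\Gamma^{B_1^{(j)},\ldots,B_{n+1}^{(j)}}(f^{[n]})\bigr](K,\ldots,K),$$
where $B_i^{(j)} = A+(t+h)K$ for $i \leq j$ and $B_i^{(j)} = A+tK$ for $i > j$.

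The crux is then to prove that each of these $n$ multiple operator integrals converges, in the $\mathcal{S}^p$-norm as $h\to 0$, to $\Psi_n(t)$; summation gives $n\cdot(n-1)!\, \Psi_n(t) = n!\,\Psi_n(t)$, which is formula \eqref{diffenrentialunbdd}. This is a continuity statement for MOIs of $f^{[n]}$ under perturbation of the operators in $\mathcal{S}^p$, where $B_i^{(j)} - (A+tK)$ lies in $\{0, hK\}$. To establish it, I would combine Theorem \ref{BddnessMOI} (providing the uniform bound $c_{p,n}\|f^{(n)}\|_\infty\|K\|_p^n$ that gives equicontinuity in the operator arguments) with an approximation of $K$ by finite-rank selfadjoint operators, as foreshadowed in the introduction: for finite-rank $K$ the MOIs reduce to finite sums involving spectral projections of $A+tK$ and $A+(t+h)K$, where strong resolvent convergence $A+(t+h)K \to A+tK$ translates into norm convergence via standard spectral calculus, and the uniform bound permits passage to the limit for general $K \in \mathcal{S}^p(\mathcal{H})$. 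The boundedness of $\varphi^{(k)}$ on $\mathbb{R}$ is an immediate consequence of the formula and the estimate of Theorem \ref{BddnessMOI}.

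The main obstacle is precisely the norm-continuity step for MOIs of $f^{[n]}$ under perturbation of the operator variables when $A$ is unbounded: the generic $w^*$-framework of Lemma \ref{LemmeUB} yields only weak $\mathcal{S}^p$-convergence, so the upgrade to $\mathcal{S}^p$-norm convergence requires the finite-rank approximation of $K$ together with the auxiliary lemmas of Section \ref{SubAux} and the algebraic manipulations (Lemmas \ref{simplification1}, \ref{simplificationsep}, \ref{simplification2}) needed to simplify the resulting MOIs; the assumption that all derivatives $f^{(i)}$, $1 \leq i \leq n$, are bounded is used here to ensure the intermediate MOIs of $f^{[i]}$ that arise in these simplifications are $\mathcal{S}^p$-bounded uniformly in the (possibly unbounded) operators involved.
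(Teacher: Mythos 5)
Your telescoping decomposition of the difference quotient via Proposition \ref{simplification0} is correct and is exactly what the paper's Lemma \ref{simplification5} encodes, and the reduction to a dense set of perturbations $K$ using the uniform bound of Theorem \ref{BddnessMOI} is in the spirit of Lemma \ref{simplification3}. But the step you yourself identify as the crux --- that each of the $n$ individual terms $\bigl[\Gamma^{B_1^{(j)},\ldots,B_{n+1}^{(j)}}(f^{[n]})\bigr](K,\ldots,K)$ converges in $\mathcal{S}^p$-norm to $\Psi_n(t)$ as $h\to 0$ --- is a genuine gap, and the mechanism you propose for it does not work. This is essentially a norm-continuity statement for multiple operator integrals of $f^{[n]}$ under strong resolvent perturbation of the operator slots; when $f^{(n)}$ is merely bounded Borel (possibly discontinuous), such continuity is precisely what fails in general --- this is why the paper needs a separate Proposition \ref{Contcase}, with $f\in C^n(\mathbb{R})$, to get continuity of $\varphi^{(n)}$. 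Your reduction to finite-rank $K$ does not rescue this: for finite-rank $K$ the MOI becomes a finite sum of rank-one operators whose coefficients are integrals of $f^{[n]}$ against products of complex spectral measures of the $B_i^{(j)}$, and strong resolvent convergence only gives convergence of these integrals for \emph{continuous} integrands; ``standard spectral calculus'' does not upgrade this for a discontinuous $f^{[n]}$. Note also that already for $n=1$ your claim is the full content of \cite[Theorem 7.13]{KPSS}, whose proof is itself nontrivial, so asserting the $n$-term version without a new idea begs the question.

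The paper circumvents this in two ways that your proposal misses. First, Theorem \ref{FormulaSAunbdd} is not proved by redoing the analysis for unbounded $A$: it is reduced to the bounded case (Theorem \ref{FormulaSA}) by truncating with $E_m=\chi_{[-m,m]}(A)$, using Lemma \ref{reducbdd} to replace $A$ by $A_m=AE_m$ when $K=E_mKE_m$, and Lemma \ref{simplification3} to pass from the dense set of such $K$ to all of $(\mathcal{S}^p(\mathcal{H}))_{\mathrm{sa}}$. Second, and more importantly, in the bounded case the paper never proves termwise convergence of the telescoping sum. It takes $K=i[A,Y]+Z$ with $Z$ commuting with $A$ (dense by \cite[Proposition 6.2]{KPSS}), conjugates by $e^{itY}$ via Lemma \ref{simplification4}, and then shows that only the \emph{sum} $\sum_k[\Gamma_k(t)](Z,\ldots,Z)$ converges: it telescopes back to $\frac{1}{(n-1)!}\,t^{-1}\bigl(f^{(n-1)}(A+tZ)-f^{(n-1)}(A)\bigr)Z^{n-1}$ by Lemma \ref{MOIcomm}, whose limit is supplied by the scalar-type result \cite[Lemma 3.4]{KS}; the commutator pieces $[A,Y]$ are converted by Remark \ref{approximatebdd} into differences of MOIs of $f^{[n-1]}$, for which norm continuity \emph{is} available (Lemma \ref{simplification5}) because it only costs one more bounded derivative, namely $f^{(n)}$. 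Without some substitute for this commutator/commutant decomposition, your argument cannot close.
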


The following allows to express operator Taylor remainders as multiple operator integrals and deduce an $\mathcal{S}^p$-estimate in the case when $f$ has a bounded $n$th derivative. It generalizes \cite[Theorem 3.8]{LMS} where such representation and estimate were obtained for $n$-times continuously differentiable functions $f$.

\begin{proposition}\label{Taylorrmd}
Let $1 < p < \infty$, $n \in\N, n\geq 2$, $A$ and $K$ be selfadjoint operators in $\mathcal{H}$ with $K\in \mathcal{S}^{np}(\mathcal{H})$. Let $f$ be $n$-times differentiable on $\mathbb{R}$ such that $f^{(n)}$ is bounded. Assume that either $A$ is bounded or $f^{(i)}$ is bounded for all $1\leq i \leq n$. Denote
$$
R_{n,p,A,K,f} = f(A+K)-f(A) - \sum_{k=1}^{n-1} \dfrac{1}{k!} \frac{d^k}{dt^k}\Big(f(A+tK)\Big)\Big|_{t=0}.
$$
Then,
\begin{equation}\label{Taylor}
R_{n,p,A,K,f} = \left[ \Gamma^{A+K, A, \ldots, A}(f^{[n]}) \right] (K, \ldots, K),
\end{equation}
and we have the inequality
\begin{equation}\label{Taylorineq}
\|R_{n,p,A,K,f}\|_p \leq c_{p,n} \|f^{(n)}\|_{\infty} \|K\|_{np}^n.
\end{equation}
\end{proposition}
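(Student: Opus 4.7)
The plan is to establish the representation \eqref{Taylor} by iterating Proposition~\ref{simplification0} as a telescoping identity, and then to derive the estimate \eqref{Taylorineq} directly from Theorem~\ref{BddnessMOI}.

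First I would reduce to the situation in which $f^{(i)}$ is bounded for every $1 \le i \le n$. If all $f^{(i)}$ are already bounded there is nothing to do; otherwise $A$ is bounded, and I replace $f$ by a function $\tilde{f}$ coinciding with $f$ on a common bounded neighbourhood of $\sigma(A) \cup \sigma(A+K)$ and whose first $n$ derivatives are all bounded on $\mathbb{R}$ (for instance, multiplying the Taylor remainder $f - P_{n-1}$ of $f$ at $0$ by a smooth compactly supported cut-off). Since every multiple operator integral appearing below, as well as the operators $f(A)$ and $f(A+K)$, depends only on the restrictions of $f$ and its derivatives to $\sigma(A) \cup \sigma(A+K)$, neither side of \eqref{Taylor} or \eqref{Taylorineq} is altered. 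With this reduction in place, Theorem~\ref{FormulaSA} (if $A$ is bounded) or Theorem~\ref{FormulaSAunbdd} (otherwise) yields
$$
\frac{1}{k!}\varphi^{(k)}(0) = \left[\Gamma^{A, A, \ldots, A}(f^{[k]})\right](K, \ldots, K), \quad 1 \le k \le n-1.
$$

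Next, I set, for $1 \le k \le n$,
$$
T_k := \left[\Gamma^{A+K, A, \ldots, A}(f^{[k]})\right](K, \ldots, K),
$$
with $k+1$ operators ($A+K$ followed by $k$ copies of $A$) and $k$ arguments. The base case $k=1$ gives $T_1 = f(A+K) - f(A)$ by the first-order perturbation formula recalled just before Proposition~\ref{simplification0}. For $2 \le k \le n$ I apply Proposition~\ref{simplification0} with $n$ replaced by $k$, $B := A+K$, $j := 1$, all auxiliary $A_i := A$ and all $K_i := K$; since $B - A = K \in \mathcal{S}^p(\mathcal{H})$, this produces the recursion
$$
T_k = T_{k-1} - \frac{1}{(k-1)!}\varphi^{(k-1)}(0).
$$
Summing from $k=2$ to $k=n$ telescopes to
$$
T_n = f(A+K) - f(A) - \sum_{k=1}^{n-1}\frac{1}{k!}\varphi^{(k)}(0) = R_{n,p,A,K,f},
$$
which is precisely \eqref{Taylor}. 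The estimate \eqref{Taylorineq} is then immediate: since $K \in \mathcal{S}^{np}(\mathcal{H})$, applying the $\mathcal{S}^{np} \times \cdots \times \mathcal{S}^{np} \to \mathcal{S}^p$ bound of Theorem~\ref{BddnessMOI} to $T_n$ gives $\|T_n\|_p \le c_{p,n}\|f^{(n)}\|_\infty\|K\|_{np}^n$.

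The only technical point I expect to demand care is the cut-off reduction, which must produce $\tilde{f}$ such that every intermediate divided difference $\tilde{f}^{[k]}$, $1 \le k \le n$, is a bounded Borel symbol, so that each application of Proposition~\ref{simplification0} is legitimate; once this is in place the telescoping is entirely formal.
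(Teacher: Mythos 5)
Your telescoping of $T_k := \bigl[\Gamma^{A+K,A,\ldots,A}(f^{[k]})\bigr](K,\ldots,K)$ via Proposition \ref{simplification0} (with $j=1$, $B=A+K$) is exactly the induction the paper has in mind --- its own proof only sketches this, deferring to \cite[Theorem 4.1 (ii)]{CLSS} --- and deriving \eqref{Taylorineq} from \eqref{IneqDDn} is also the paper's route, so the argument is correct in substance. Two details need repair. First, the explicit cut-off you suggest fails for $n\geq 3$: if $\tilde f = P_{n-1}+\chi\,(f-P_{n-1})$ then $\tilde f^{(i)}$ contains the unbounded polynomial $P_{n-1}^{(i)}$ for $i\leq n-2$, while $\chi\,(f-P_{n-1})$ alone does not coincide with $f$ near the spectra. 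Take instead $\tilde f=\chi f$ with $\chi$ smooth, compactly supported and $\equiv 1$ on a bounded \emph{interval} $J\supset\sigma(A)\cup\sigma(A+K)$; then every $\tilde f^{(i)}$, $1\leq i\leq n$, is bounded (the lower derivatives of $f$ are continuous, hence bounded on the compact support of $\chi$), and since by \eqref{DDint} the value of $f^{[k]}$ at points of $J^{k+1}$ only involves $f^{(k)}$ on the convex set $J$, no operator quantity in \eqref{Taylor} changes. Second, the hypothesis is $K\in\mathcal{S}^{np}(\mathcal{H})$, which does \emph{not} imply $K\in\mathcal{S}^{p}(\mathcal{H})$; you should invoke Theorems \ref{FormulaSA}/\ref{FormulaSAunbdd} and Proposition \ref{simplification0} with $p$ replaced by $np$ throughout, so that the identity \eqref{Taylor} is first obtained in $\mathcal{S}^{np}(\mathcal{H})$, after which \eqref{IneqDDn} shows that the remainder in fact lies in $\mathcal{S}^{p}(\mathcal{H})$ with the stated bound.
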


Finally, the result stated below is the $\mathcal{S}^p$-analogue of \cite[Theorem 4.1]{CLSS}. Note that \cite[Theorem 3.7 (ii)]{LMS} establishes the existence of the $n$th derivative of $\varphi$ under the assumptions of Proposition $\ref{Contcase}$. We prove here that $\varphi$ is actually $n$-times continuously differentiable.

\begin{proposition}\label{Contcase}
Let $1 < p < \infty$, let $A$ and $K$ be selfadjoint operators in $\mathcal{H}$ with $K\in \mathcal{S}^p(\mathcal{H})$. Let $n \in\N$ and $f\in C^n(\mathbb{R})$.  Assume that either $A$ is bounded or $f^{(i)}$ is bounded for all $1\leq i\leq n$. Consider the function
$$\varphi : t\in \mathbb{R} \mapsto f(A+tK) - f(A) \in \mathcal{S}^p(\mathcal{H}).$$
Then $\varphi$ belongs to $C^n(\mathbb{R},\Sc^p(\H ))$ and for every integer $1 \leq k \leq n$ and $t\in \mathbb{R}$,
\begin{equation*}
\dfrac{1}{k!} \varphi^{(k)}(t) = \left[ \Gamma^{A+tK, A+tK, \ldots, A+tK}(f^{[k]}) \right] (K, \ldots, K).
\end{equation*}
\end{proposition}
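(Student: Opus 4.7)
By Theorems \ref{FormulaSA} and \ref{FormulaSAunbdd}, $\varphi$ already belongs to $D^n(\mathbb{R}, \mathcal{S}^p(\mathcal{H}))$ and satisfies the formula for $\varphi^{(k)}$ claimed in the proposition; the only missing piece is the continuity of each $\varphi^{(k)}$. Since all orders share the same structural representation as a multiple operator integral against a divided difference, it suffices to prove that $\varphi^{(n)}$ is continuous. My plan is to approximate $f$ by smoother functions $f_m$ for which $\varphi_m^{(n)}$ is automatically continuous as the $n$th derivative of an $(n+1)$-times differentiable function, and then to pass to the limit uniformly on compact sets in $\mathcal{S}^p$-norm.

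Fix a standard mollifier $(\rho_m)_{m \ge 1} \subset C^\infty_c(\mathbb{R})$. In the bounded $A$ case and on a fixed compact interval $I \subset \mathbb{R}$, one has $\sigma(A+tK) \subset [-M,M]$ uniformly for $t \in I$, so multiplying $f$ by a cutoff $\chi \in C^\infty_c(\mathbb{R})$ equal to $1$ on $[-M-1,M+1]$ does not change $\varphi(t)$ for $t\in I$ by the spectral theorem; one may thus assume $f$ compactly supported and $f^{(n)}$ uniformly continuous on $\mathbb{R}$, and set $f_m := f * \rho_m$. In the unbounded $A$ case with $f^{(i)}$ bounded for $1 \le i \le n$, set $f_m := f * \rho_m$ directly: the derivatives $f_m^{(i)}$, $1 \le i \le n$, remain bounded (uniformly in $m$), and $f_m^{(n+1)} = f^{(n)} * \rho_m'$ is bounded. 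In both situations $f_m$ is $(n+1)$-times differentiable with bounded $f_m^{(n+1)}$, so Theorem \ref{FormulaSA} or Theorem \ref{FormulaSAunbdd} applied at order $n+1$ yields $\varphi_m \in D^{n+1}(\mathbb{R},\mathcal{S}^p(\mathcal{H}))$, and $\varphi_m^{(n)}$ is in particular continuous.

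For the limit, Theorem \ref{BddnessMOI} applied to the symbol $f^{[n]} - f_m^{[n]}$ yields the estimate
\begin{equation*}
\|\varphi^{(n)}(t) - \varphi_m^{(n)}(t)\|_p \le n!\,c_{p,n}\,\|f^{(n)} - f_m^{(n)}\|_\infty\,\|K\|_{np}^n.
\end{equation*}
In the bounded $A$ case this tends to $0$ uniformly in $t \in I$ because $f^{(n)}$ has been arranged to be uniformly continuous on $\mathbb{R}$, and the continuity of $\varphi^{(n)}|_I$ follows as a uniform limit of continuous functions. The main obstacle is the unbounded $A$ case, where $f^{(n)}$ is only continuous and bounded (not necessarily uniformly continuous), so $\|f^{(n)} - f_m^{(n)}\|_\infty$ need not tend to $0$. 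To work around this I would decompose $K = K' + K''$ with $K'$ of finite rank and $\|K''\|_p < \varepsilon$, expand the multiple operator integrals by multilinearity, control the terms containing at least one $K''$ uniformly in $t$ and $m$ via Theorem \ref{BddnessMOI} and the factor $\|K''\|_{np} \le \|K''\|_p < \varepsilon$, and handle the remaining term involving only $K'$ -- whose range lies in a fixed finite-dimensional subspace -- by using the $w^*$-continuity provided by Lemma \ref{LemmeUB} together with a spectral cutoff of $A + tK$ to a large compact interval, reducing the estimate to values of $f^{(n)}$ on a bounded set where uniform continuity does hold.
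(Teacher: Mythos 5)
Your reduction to the continuity of $\varphi^{(n)}$ alone is legitimate (for $k<n$ the derivative $\varphi^{(k)}$ is differentiable, hence continuous), and your treatment of the bounded-$A$ case is correct and genuinely different from the paper's: after the compactly supported cutoff, $f^{(n)}$ is uniformly continuous, $\|f^{(n)}-f_m^{(n)}\|_\infty\to 0$, and linearity of $\phi\mapsto\Gamma^{A+tK,\ldots,A+tK}(\phi)$ together with \eqref{IneqDDn2} applied to the symbol $(f-f_m)^{[n]}$ gives uniform convergence of $\varphi_m^{(n)}$ to $\varphi^{(n)}$ on $I$. The paper never mollifies $f$: it multiplies the symbol by $g_k\otimes 1\otimes\cdots\otimes 1\otimes g_k$, rewrites $f^{[n]}(x_0,\ldots,x_n)g(x_0)g(x_n)$ via the algebraic identity \eqref{recfng} in terms of $(gf)^{[n]}$, $g^{[n]}$ and lower-order products (handled by induction on $n$ and by \cite[Theorem 3.4]{LMS} for functions with top derivative in $C_0(\mathbb{R})$), and controls the error by fixing $k_0$ first and using only the strong convergence $g_{k_0}(A+tK)X_1\to g_{k_0}(A)X_1$.

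The genuine gap is in the unbounded-$A$ case, at the term $\left[\Gamma^{A+tK,\ldots,A+tK}\bigl((f-f_m)^{[n]}\bigr)\right](K',\ldots,K')$ with $K'$ finite rank. Three things are missing. First, Lemma \ref{LemmeUB} yields only \emph{weak} convergence in $\mathcal{S}^p(\mathcal{H})$, which cannot produce the norm estimate, uniform in $t$ near $0$ and valid for all large $m$, that a uniform-limit argument requires. Second, a ``spectral cutoff of $A+tK$'' is a projection $\chi_{[-R,R]}(A+tK)$ depending discontinuously on $t$; to make the truncation error small uniformly in $t$ you need a tightness statement of the form: for every $\varepsilon>0$ there is $R$ with $\|(1-\chi_{[-R,R]}(A+tK))K'\|_p<\varepsilon$ for all small $t$. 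This is plausible (dominate the indicator by a $C_b$ function and invoke strong resolvent convergence), but it is neither proved nor supplied by any lemma in the paper. Third, even granting the cutoff, the truncated symbol $(f-f_m)^{[n]}\chi_{[-R,R]^{n+1}}$ is no longer a divided difference, and there is no bound $\|\Gamma^{A_1,\ldots,A_{n+1}}(\phi)\|_{\mathcal{B}_n(\mathcal{S}^p)}\lesssim\|\phi\|_\infty$ for general bounded Borel $\phi$ --- Theorem \ref{BddnessMOI} is special to symbols of the form $\varphi_{n,g}$ --- so ``reducing to values of $f^{(n)}$ on a bounded set'' must be implemented by substituting a globally controlled function that agrees with $f-f_m$ near $[-R,R]$, and the residual symbol, supported where some coordinate is large, again requires the tightness statement. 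None of this is fatal in principle, but as written the unbounded case is a sketch with the hard analytic content absent; the paper's multiplicative cutoff $g_k\otimes 1\otimes\cdots\otimes 1\otimes g_k$ is designed precisely to sidestep these issues, since it touches only the outer variables and materializes as the fixed bounded operators $g_{k_0}(A+tK)$ acting on the first and last arguments.
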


\subsection{Auxiliary lemmas}\label{SubAux}

In this subsection, we will prove important technical lemmas that will be used in Section $\ref{ProofofMR}$.

\begin{lemma}\label{MOIcomm}
Let $1 < p < \infty$, $n \in \mathbb{N}, n \geq 1$. Let $A \in \mathcal{B}(\mathcal{H})$ be a selfadjoint operator and let $Z_1, \ldots, Z_n \in \mathcal{S}^p(\mathcal{H})$ be such that $A$ and $Z_i$ commute, for every $1\leq i\leq n$. Let $f$ be $n$-times differentiable on $\mathbb{R}$ such that $f^{(n)}$ is bounded. Then
$$
\left[ \Gamma^{A, \ldots, A}(f^{[n]}) \right] (Z_1, \ldots, Z_n) = \dfrac{1}{n!} f^{(n)}(A)Z_1\ldots Z_n.
$$
\end{lemma}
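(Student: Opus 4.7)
The plan is to establish the formula first for polynomial $f$ by direct computation, then to bootstrap to the general case in two successive approximation steps, using the $w^*$-continuity of multiple operator integrals (Lemma \ref{LemmeUB}) together with the integral representation \eqref{DDint}. The commutativity hypothesis is used crucially in the base case, where it allows every power $A^{i_j}$ produced by \eqref{MOItensor} to collapse past the $Z_j$'s to a single side.

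For the base case, I take a monomial $f(x) = x^m$. If $m < n$ both sides vanish, so assume $m \geq n$; then the classical identity reads
\[
f^{[n]}(x_0, \ldots, x_n) = \sum_{i_0+\cdots+i_n=m-n} x_0^{i_0} x_1^{i_1} \cdots x_n^{i_n}.
\]
Expanding the MOI via \eqref{MOItensor} and using that each $A^{i_j}$ commutes with each $Z_k$, every summand collapses to $A^{m-n} Z_1 \cdots Z_n$; since the number of multi-indices is $\binom{m}{n} = \frac{1}{n!} \cdot \frac{m!}{(m-n)!}$, the total equals $\frac{1}{n!} f^{(n)}(A) Z_1 \cdots Z_n$. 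Linearity of the MOI in the function argument extends this to every polynomial $f$.

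Next, I treat the case when $f^{(n)}$ is continuous and bounded. Since $A \in \mathcal{B}(\mathcal{H})$, its spectrum lies in the compact interval $I := [-\|A\|, \|A\|]$. By Weierstrass, choose polynomials $q_k$ with $q_k \to f^{(n)}$ uniformly on $I$, and let $p_k$ be an $n$-fold polynomial antiderivative of $q_k$. The integral formula \eqref{DDint} then gives $p_k^{[n]} = \varphi_{n, q_k} \to \varphi_{n, f^{(n)}} = f^{[n]}$ uniformly on $I^{n+1}$, hence $w^*$ in $L^\infty\bigl(\prod \lambda_A\bigr)$. Theorem \ref{BddnessMOI} provides a uniform $\mathcal{B}_n(\mathcal{S}^p)$-bound on the corresponding MOIs, so Lemma \ref{LemmeUB} yields weak $\mathcal{S}^p$-convergence of $[\Gamma^{A,\ldots, A}(p_k^{[n]})](Z_1, \ldots, Z_n)$ to $[\Gamma^{A,\ldots, A}(f^{[n]})](Z_1, \ldots, Z_n)$. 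By the polynomial case the former equals $\frac{1}{n!} q_k(A) Z_1 \cdots Z_n$, and this converges to $\frac{1}{n!} f^{(n)}(A) Z_1 \cdots Z_n$ in $\mathcal{S}^p$-norm since $q_k(A) \to f^{(n)}(A)$ in operator norm and $Z_1 \cdots Z_n \in \mathcal{S}^p$. Uniqueness of weak limits gives the formula.

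Finally, to handle a merely bounded Borel $f^{(n)}$, I mimic the proof of Theorem \ref{BddnessMOI} and set $g_k(t) = k\bigl(f^{(n-1)}(t+1/k) - f^{(n-1)}(t)\bigr)$, so that $g_k$ is continuous, $g_k \to f^{(n)}$ pointwise, and $|g_k| \leq \|f^{(n)}\|_\infty$. Any $n$-fold antiderivative $F_k$ of $g_k$ satisfies $F_k^{(n)} = g_k$, so the continuous case applies to $F_k$. Dominated convergence in the integral formula gives $F_k^{[n]} = \varphi_{n, g_k} \to f^{[n]}$ pointwise and uniformly bounded, hence $w^*$; Lemma \ref{LemmeUB} then yields weak $\mathcal{S}^p$-convergence of the left-hand side. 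For the right-hand side, the Borel functional calculus gives SOT-convergence $g_k(A) \to f^{(n)}(A)$ under the uniform bound $\|g_k\|_\infty \leq \|f^{(n)}\|_\infty$, and since $Z_1 \cdots Z_n$ is compact, a standard finite-rank approximation argument upgrades this to $\mathcal{S}^p$-norm convergence of $g_k(A) Z_1 \cdots Z_n$. The main obstacle is precisely this final assembly of three distinct modes of convergence — $w^*$ in $L^\infty$, SOT for the functional calculus, and Schatten-norm via compactness — into a single equality in $\mathcal{S}^p$; the commutativity of $A$ with each $Z_i$ is what keeps the right-hand side in its tractable multiplicative form $f^{(n)}(A) Z_1 \cdots Z_n$ throughout the entire approximation scheme.
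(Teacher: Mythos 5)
Your proof follows essentially the same route as the paper's: reduce to polynomials via \eqref{MOItensor} and commutativity, pass to continuous $f^{(n)}$ by uniform polynomial approximation on $\mathrm{conv}(\sigma(A))$, and finally reach a general bounded Borel $f^{(n)}$ via the difference quotients $g_k$ and the $w^*$-continuity packaged in Lemma \ref{LemmeUB}; the final assembly (weak limit on the left, strong-times-Schatten upgrade on the right) is exactly the paper's. Your monomial computation, the count $\binom{m}{n}$ of multi-indices, and the compactness argument upgrading $g_k(A)Z_1\cdots Z_n$ to $\mathcal{S}^p$-norm convergence are all correct.

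There is one technical point in your middle step that needs repair. You invoke Theorem \ref{BddnessMOI} to get a uniform $\mathcal{B}_n(\mathcal{S}^p)$-bound on $\Gamma^{A,\ldots,A}(p_k^{[n]})$, but the bound that theorem supplies is $c_{p,n}\|p_k^{(n)}\|_\infty = c_{p,n}\sup_{\mathbb{R}}|q_k|$, which is infinite for a nonconstant polynomial $q_k$; as written, the hypothesis of Lemma \ref{LemmeUB} does not follow. The fix is the one the paper carries out with its functions $v^k_j$: since $\varphi_{n,g}(x_0,\ldots,x_n)$ only evaluates $g$ at convex combinations of the $x_j$, the operator $\Gamma^{A,\ldots,A}(\varphi_{n,q_k})$ is unchanged if $q_k$ is replaced by a bounded continuous function agreeing with it on the compact interval $I=\mathrm{conv}(\sigma(A))$, and the resulting bound $c_{p,n}\sup_I|q_k|$ is uniform in $k$ because $q_k\to f^{(n)}$ uniformly on $I$. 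With that modification inserted, your argument is complete and matches the paper's.
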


\begin{proof}
In this proof, we will use the notation introduced before the statement of Theorem $\ref{BddnessMOI}$. For any $k\geq 1$, we let $\psi_k := \varphi_{n,g_k}$ to be the function defined as in the proof of Theorem $\ref{BddnessMOI}$. For any bounded Borel function $g$, we let $\tilde{g}$ to be the function defined on $\mathbb{R}$ by $\tilde{g}_k(x) = g(x, \ldots, x), x\in \mathbb{R}$. Let us prove first that for any $k\geq 1$,
$$
\left[ \Gamma^{A, \ldots, A}(\psi_k) \right] (Z_1, \ldots, Z_n) = \tilde{\psi}_k(A)Z_1\ldots Z_n.
$$
Fix $k\geq 1$. $A$ is bounded so $\sigma(A) \subset \mathbb{R}$ is bounded and by definition,
$$\Gamma^{A, \ldots, A}(\psi_k) = \Gamma^{A, \ldots, A}(\phi_k)$$
where $\phi_k$ is the class in $L^{\infty}(\lambda_A \times \cdots \times \lambda_A)$ of the restriction of $\psi_k$ to $\sigma(A)^{n+1}$. $g_k$ is continuous on the compact $I= \text{conv}(\sigma(A))$ so there exists a sequence $(P^k_j)_{j\geq 1}$ of polynomial functions converging uniformly to $g_k$ on $I$. For any $j\geq 1$, define $Q^k_j = \varphi_{n, P^k_j}$. It is easy to see that $(Q^k_j)_{j\geq 1}$ converges uniformly to $\psi_k$ on $\sigma(A)^{n+1}$. According to \eqref{DDint}, $Q^k_j = (R^k_j)^{[n]}$ where $R^k_j$ is a polynomial function on $\mathbb{R}$ such that $(R^k_j)^{(n)} = P^k_j$. Hence $Q^k_j$ is a $(n+1)$-variable polynomial function, and in particular, $Q^k_j \in \text{Bor}(\mathbb{R}) \otimes \cdots \otimes \text{Bor}(\mathbb{R})$. Note that for an elementary tensor $g = g_1 \otimes \cdots \otimes g_{n+1} \in \text{Bor}(\mathbb{R}) \otimes \cdots \otimes \text{Bor}(\mathbb{R})$, we have
\begin{align*}
\left[ \Gamma^{A, \ldots, A}(g) \right] (Z_1, \ldots, Z_n)
& = g_1(A)Z_1g_2(A)\ldots g_n(A)Z_ng_{n+1}(A) \\
& = g_1(A)\ldots g_{n+1}(A) Z_1 \ldots Z_n \\
& = \tilde{g}(A) Z_1 \ldots Z_n.
\end{align*}
By linearity, this implies that for any $j \geq 1$,
\begin{equation}\label{eqcomm0}
\left[ \Gamma^{A, \ldots, A}(Q^k_j) \right] (Z_1, \ldots, Z_n) = \tilde{Q}^k_j(A)Z_1\ldots Z_n.
\end{equation}
For any $j\geq 1$, we let $v^k_j \in \text{Bor}(\mathbb{R})$ be such that $v^k_j = P^k_j$ on $I$ and  $v^k_j \to g_k$ uniformly on $\mathbb{R}$. Then
$$
\Gamma^{A, \ldots, A}(\varphi_{n, P^k_j}) = \Gamma^{A, \ldots, A}(\varphi_{n, v^k_j})
$$
and by Theorem $\ref{BddnessMOI}$, there exists a constant $c_{p,n}$ such that
\begin{align*}
& \| \left[ \Gamma^{A, \ldots, A}(Q^k_j) \right] (Z_1, \ldots, Z_n) - \left[ \Gamma^{A, \ldots, A}(\psi_k) \right] (Z_1, \ldots, Z_n) \|_p \\
& = \| \left[ \Gamma^{A, \ldots, A}(\varphi_{n, v^k_j} - \varphi_{n, g_k}) \right] (Z_1, \ldots, Z_n) \|_p \\
& \leq c_{p,n} \| v^k_j - g_k\|_{\infty} \|Z_1\|_p \cdots \|Z_n\|_p \\
& \underset{j\to \infty}{\longrightarrow} 0.
\end{align*}
Note that $(\tilde{Q}^k_j)_{j\geq 1}$ converges uniformly to $\tilde{\phi}_k$ on $\sigma(A)$. Hence, $\tilde{Q}^k_j(A)$ converges to $\tilde{\psi}_k(A)$ in $\mathcal{B}(\mathcal{H})$ so that the right-hand side of \eqref{eqcomm0} converges in $\mathcal{S}^p(\mathcal{H})$ to $\tilde{\psi}_k(A)Z_1\ldots Z_n$. By taking the limit on $j$ in \eqref{eqcomm0} we get
\begin{equation}\label{eqcomm}
\left[ \Gamma^{A, \ldots, A}(\psi_k) \right] (Z_1, \ldots, Z_n) = \tilde{\psi}_k(A)Z_1\ldots Z_n.
\end{equation}
Recall that, from the proof of Theorem $\ref{BddnessMOI}$, the sequence $(\psi_k)_{k\geq 1}$ $w^*$-converges to $f^{[n]}$ for the $w^*$-topology of $L^{\infty}(\lambda_A \times \cdots \times \lambda_A)$ and that $(\Gamma^{A, \ldots, A}(\psi_k))_{k\geq 1} \subset \mathcal{B}_n(\mathcal{S}^p(\mathcal{H}))$ is bounded. Hence, by Lemma $\ref{LemmeUB}$,
$$
\left[ \Gamma^{A, \ldots, A}(\psi_k) \right] (Z_1, \ldots, Z_n) \underset{k \to \infty}{\longrightarrow} \left[ \Gamma^{A, \ldots, A}(f^{[n]}) \right] (Z_1, \ldots, Z_n)
$$
weakly in $\mathcal{S}^p(\mathcal{H})$. On the other hand, $(\tilde{\psi}_k)_{k\geq 1}$ is bounded and is pointwise convergent to $\tilde{f}^{[n]}=\dfrac{1}{n!}f^{(n)}$ so $(\tilde{\psi}_k(A))_{k\geq 1}$ converges strongly to $\dfrac{1}{n!} f^{(n)}(A)$ (see e.g. the proof of \cite[Proposition 3.1]{CLSS}). This implies that the right-hand side of \eqref{eqcomm} converges in $\mathcal{S}^p(\mathcal{H})$ to $\dfrac{1}{n!} f^{(n)}(A)Z_1\ldots Z_n.$ We conclude the proof by taking the limit on $k$ in \eqref{eqcomm}, in the weak topology of $\mathcal{S}^p(\mathcal{H})$.

\end{proof}

From now on, we will adopt the following notation: if $X$ is an operator on $\mathcal{H}$, then for any integer $k$, we denote by $(X)^{k}$ the tuple consisting of $k$ copies of $X$.

\begin{lemma}\label{simplification5}
Let $1 < p < \infty$, $n\in\mathbb{N}, n\geq 2$. Let $A, K$ be selfadjoint operators in $\mathcal{H}$ with $K\in \mathcal{S}^p(\mathcal{H})$ and let $X_1, \ldots, X_{n-1} \in \mathcal{S}^p(\mathcal{H})$. Let $f$ be $n$-times differentiable on $\mathbb{R}$ such that $f^{(n)}$ is bounded. Assume that either $A$ is bounded or $f^{(n-1)}$ is bounded. Let $1 \leq j \leq n$. Define $\psi : t\in \mathbb{R} \to \mathcal{S}^p(\mathcal{H})$ by
$$
\psi(t) = \left[ \Gamma^{(A+tK)^j, (A)^{n-j}}(f^{[n-1]}) \right](X_1, \ldots, X_{n-1}), t\in \mathbb{R}.
$$
Then we have, for any $t\in \mathbb{R}$,
\begin{align}\label{simplification5eq}
\begin{split}
& \psi(t) - \psi(0) \\
& \ \ \ \ = t \sum_{k=1}^{j} \left[ \Gamma^{(A+tK)^{j-k+1},(A)^{n-j+k}}(f^{[n]}) \right](X_1, \ldots, X_{j-k}, K, X_{j-k+1}, \ldots, X_{n-1}).
\end{split}
\end{align}
In particular, $\psi$ is continuous in $0$.
\end{lemma}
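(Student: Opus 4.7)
The plan is a telescoping argument based on Proposition~\ref{simplification0}. Write $B := A+tK$, so that $B-A = tK \in \mathcal{S}^p(\mathcal{H})$, and for each integer $0 \leq k \leq j$ set
\[
T_k := \left[\Gamma^{(B)^{j-k},(A)^{n-j+k}}(f^{[n-1]})\right](X_1,\ldots,X_{n-1}).
\]
Then $T_0 = \psi(t)$ and $T_j = \psi(0)$, so $\psi(t) - \psi(0) = \sum_{k=1}^{j}(T_{k-1}-T_k)$. The difference $T_{k-1}-T_k$ amounts to changing position $j-k+1$ of the selfadjoint tuple from $B$ to $A$ while leaving the other entries (namely $B$ at positions $1,\ldots,j-k$ and $A$ at positions $j-k+2,\ldots,n$) unchanged, which is precisely what Proposition~\ref{simplification0} is designed to handle. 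Applying it with its index ``$j$'' equal to $j-k+1$ and using $B-A=tK$ gives
\[
T_{k-1} - T_k = t\left[\Gamma^{(B)^{j-k+1},(A)^{n-j+k}}(f^{[n]})\right](X_1,\ldots,X_{j-k},K,X_{j-k+1},\ldots,X_{n-1}),
\]
which is exactly the $k$-th summand on the right-hand side of \eqref{simplification5eq}; summing over $k$ yields the identity.

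The one genuine subtlety is a hypothesis mismatch: Proposition~\ref{simplification0} requires both $f^{(n-1)}$ and $f^{(n)}$ bounded, whereas here I only have $f^{(n)}$ bounded together with either $A$ bounded or $f^{(n-1)}$ bounded. In the second case the proposition applies verbatim. In the first case $A$ and $B=A+tK$ are both bounded, hence their spectra lie in a common compact interval $I$, and I would replace $f$ by $\tilde f := f\chi$ for a compactly supported smooth cutoff $\chi$ equal to $1$ on a neighbourhood of $I$. Then $\tilde f$ is $n$-times differentiable on $\mathbb{R}$ with $\tilde f^{(n-1)}$ and $\tilde f^{(n)}$ bounded, and since a multiple operator integral depends only on the class of its symbol on the product of spectra, we have $\tilde f^{[n-1]} = f^{[n-1]}$ and $\tilde f^{[n]} = f^{[n]}$ in the relevant $L^\infty$ spaces, so every MOI appearing in the argument is unaffected by the replacement.

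Continuity of $\psi$ at $0$ then drops out of \eqref{simplification5eq}: by Theorem~\ref{BddnessMOI}, each multiple operator integral in the sum has $\mathcal{S}^p$-norm bounded by $c_{p,n}\|f^{(n)}\|_\infty$ uniformly in $t$, so
\[
\|\psi(t)-\psi(0)\|_p \leq C|t|\,\|K\|_p\prod_{i=1}^{n-1}\|X_i\|_p
\]
for some constant $C$ independent of $t$, and $\psi(t) \to \psi(0)$ in $\mathcal{S}^p(\mathcal{H})$ as $t\to 0$. The only real difficulty is the cutoff step needed to bridge the hypothesis gap with Proposition~\ref{simplification0}; the telescoping identity itself is formal once that proposition is available.
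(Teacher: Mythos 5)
Your proof is correct and follows essentially the same route as the paper: the identical telescoping decomposition of $\psi(t)-\psi(0)$ into $j$ consecutive differences, each converted by Proposition~\ref{simplification0} (applied at position $j-k+1$ with $B-A=tK$) into the corresponding term of \eqref{simplification5eq}, and continuity at $0$ from the uniform bound of Theorem~\ref{BddnessMOI}. Your cutoff argument for the case where $A$ is bounded but $f^{(n-1)}$ possibly is not is a welcome extra precaution that the paper leaves implicit when it invokes Proposition~\ref{simplification0}, whose stated hypotheses require $f^{(n-1)}$ bounded.
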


\begin{proof}
We have the following decomposition
\begin{align*}
&\psi(t) - \psi(0) \\
& = \sum_{k=1}^j \left[ \Gamma^{(A+tK)^{j-k+1},(A)^{n-j+k-1}}(f^{[n-1]}) - \Gamma^{(A+tK)^{j-k},(A)^{n-j+k}}(f^{[n-1]}) \right](X_1, \ldots, X_{n-1}).
\end{align*}
For any $1\leq k\leq j$, we have, by Proposition $\ref{simplification0}$,
\begin{align*}
& \left[ \Gamma^{(A+tK)^{j-k+1},(A)^{n-j+k-1}}(f^{[n-1]}) - \Gamma^{(A+tK)^{j-k},(A)^{n-j+k}}(f^{[n-1]}) \right](X_1, \ldots, X_{n-1}) \\
& = t \left[ \Gamma^{(A+tK)^{j-k+1},(A)^{n-j+k}}(f^{[n]}) \right](X_1, \ldots, X_{j-k}, K, X_{j-k+1}, \ldots, X_{n-1}),
\end{align*}
from which we deduce \eqref{simplification5eq}.

For the continuity of $\psi$ in $0$, note that by Theorem $\ref{BddnessMOI}$ there exists a constant $c_{p,n} > 0$ such that
\begin{align*}
& \| \psi(t) - \psi(0) \|_p \\
& \leq |t| \sum_{k=1}^{j} \| \left[ \Gamma^{(A+tK)^{j-k+1},(A)^{n-j+k}}(f^{[n]}) \right](X_1, \ldots, X_{j-k}, K, X_{j-k+1}, \ldots, X_{n-1}) \|_p \\
& \leq |t| j c_{p,n} \|f^{(n)} \|_{\infty} \|K\|_p \prod_{i=1}^{n-1} \| X_i \|_p,
\end{align*}
which converges to $0$ as $t$ goes to $0$.

\end{proof}

\begin{lemma}\label{simplification3}
Let $1 < p < \infty$, $n \in\N, n\geq 2$. Let $A$ be a selfadjoint operator in $\mathcal{H}$ and $f$ be $n$-times differentiable on $\mathbb{R}$ such that $f^{(n)}$ is bounded. Assume that either $A$ is bounded or $f^{(n-1)}$ is bounded. Let $X_0 \subset (\mathcal{S}^p(\mathcal{H}))_{\text{sa}}$ be a dense subset. Assume that for any $K_0 \in X_0$, the map $\psi_0 : t\in \mathbb{R} \to \mathcal{S}^p(\mathcal{H})$ defined by
$$
\psi_0(t) = \left[ \Gamma^{A+tK_0,\ldots,A+tK_0}(f^{[n-1]}) \right](K_0, \ldots, K_0), t\in \mathbb{R},
$$
is differentiable in $0$ with
$$
\psi_0'(0) = n \left[ \Gamma^{A,\ldots,A}(f^{[n]}) \right](K_0, \ldots, K_0).
$$
Let $K \in \mathcal{S}^p(\mathcal{H})$ selfadjoint and define $\psi : t\in \mathbb{R} \to \mathcal{S}^p(\mathcal{H})$ by
$$
\psi(t) = \left[ \Gamma^{A+tK,\ldots,A+tK}(f^{[n-1]}) \right](K, \ldots, K), t\in \mathbb{R}.
$$
Then $\psi$ is differentiable in $0$ and
$$
\psi'(0) = n \left[ \Gamma^{A,\ldots,A}(f^{[n]}) \right](K, \ldots, K).
$$
\end{lemma}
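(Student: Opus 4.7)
The strategy is an approximation argument: since $X_0$ is dense in $(\Sc^p(\H))_{\mathrm{sa}}$, I would fix $K_0 \in X_0$ close to $K$ and transfer the differentiability from $\psi_0$ to $\psi$ by controlling $\psi-\psi_0$ appropriately. For $t\neq 0$, I would write
\begin{align*}
\frac{\psi(t)-\psi(0)}{t} & - n\bigl[\Gamma^{A,\ldots,A}(f^{[n]})\bigr](K,\ldots,K) \\
& = \frac{(\psi(t)-\psi_0(t))-(\psi(0)-\psi_0(0))}{t} + \left(\frac{\psi_0(t)-\psi_0(0)}{t} - \psi_0'(0)\right) \\
& \quad + n\bigl[\Gamma^{A,\ldots,A}(f^{[n]})\bigr](K_0,\ldots,K_0) - n\bigl[\Gamma^{A,\ldots,A}(f^{[n]})\bigr](K,\ldots,K).
\end{align*}
The second summand tends to $0$ as $t\to 0$ by hypothesis on $K_0$, and the third is of order $\|K-K_0\|_p$ by multilinearity together with Theorem \ref{BddnessMOI}. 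The heart of the proof is to bound the first summand in $\Sc^p$-norm by a constant times $\|K-K_0\|_p$ uniformly on $|t|\leq 1$, so that we can first choose $K_0$ close to $K$ and only then take $t$ small.

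To estimate the first summand, I would decompose $\psi(t)-\psi_0(t)=A(t)+B(t)$, where $A(t)$ changes only the arguments of $\Gamma^{(A+tK)^n}(f^{[n-1]})$ from $(K,\ldots,K)$ to $(K_0,\ldots,K_0)$, and $B(t)$ changes only the family of operators from $A+tK$ to $A+tK_0$ (with the arguments fixed to $(K_0,\ldots,K_0)$). A multilinearity telescoping rewrites $A(t)$ as a sum of $n-1$ terms $\Gamma^{(A+tK)^n}(f^{[n-1]})$ each having $K-K_0$ in exactly one argument; applying Lemma \ref{simplification5} (with $j=n$) to each such summand expresses $A(t)-A(0)$ as $t$ times a sum of multiple operator integrals associated with $f^{[n]}$ whose inputs still include $K-K_0$. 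Theorem \ref{BddnessMOI} then yields $\|(A(t)-A(0))/t\|_p \leq C\|K-K_0\|_p$ uniformly in $t$. For $B(t)$, a successive replacement of $A+tK$ by $A+tK_0$ combined with Proposition \ref{simplification0} gives
\begin{equation*}
B(t) = t\sum_{j=0}^{n-1} \bigl[\Gamma^{(A+tK)^{n-j},(A+tK_0)^{j+1}}(f^{[n]})\bigr]\bigl(K_0,\ldots, K_0, K-K_0, K_0,\ldots, K_0\bigr),
\end{equation*}
and since $B(0)=0$, Theorem \ref{BddnessMOI} also bounds $\|B(t)/t\|_p$ uniformly by $C'\|K-K_0\|_p$.

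The main obstacle is precisely this uniform bound on the first summand: the direct estimates $\|\psi(t)-\psi_0(t)\|_p = O(\|K-K_0\|_p)$ and $\|\psi(0)-\psi_0(0)\|_p = O(\|K-K_0\|_p)$ are inadequate because dividing by $t$ would blow up as $t\to 0$. The point of the $A/B$ decomposition is that $B(t)$ automatically gains an extra factor of $t$ through $(A+tK)-(A+tK_0)=t(K-K_0)$, made explicit by Proposition \ref{simplification0}, while Lemma \ref{simplification5} extracts the $t$-dependence from $A(t)$; in both contributions the remaining small quantity is $K-K_0$, and Theorem \ref{BddnessMOI} delivers bounds uniform in the operators $A+tK$ and $A+tK_0$. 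A minor technical point is that under the hypothesis ``$A$ bounded'' rather than ``$f^{(n-1)}$ bounded'', one may replace $f$ by a function agreeing with it on a bounded interval containing the spectra of $A+tK$ and $A+tK_0$ for $|t|\leq 1$ in order to invoke the auxiliary results requiring global bounds on $f^{(n-1)}$.
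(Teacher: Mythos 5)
Your proposal is correct and follows essentially the same route as the paper: approximate $K$ by $K_0\in X_0$, interpolate between $\psi$ and $\psi_0$ through an intermediate term, extract the factor of $t$ via Proposition \ref{simplification0}/Lemma \ref{simplification5}, and use Theorem \ref{BddnessMOI} with multilinearity to get bounds uniform in $t$ of order $\|K-K_0\|_p$. The only cosmetic difference is the choice of intermediate corner: the paper passes through $\bigl[\Gamma^{(A+tK_0)^n}(f^{[n-1]})\bigr](K,\ldots,K)$ (change the operators first), whereas you pass through $\bigl[\Gamma^{(A+tK)^n}(f^{[n-1]})\bigr](K_0,\ldots,K_0)$ (change the arguments first); the estimates are the same.
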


\begin{proof}
Let $K\in \mathcal{S}^p(\mathcal{H})$ selfadjoint and show that $\psi$ is differentiable in $0$ with
$$
\psi'(0) = n \left[\Gamma^{A, \ldots A}(f^{[n]}) \right](K, \ldots, K).
$$
Let $\epsilon > 0$ and choose $K_0 \in X_0$ such that $\|K-K_0\|_p \leq \epsilon$. By assumption, $\psi_0$ is differentiable in $0$ and
$$
\psi_0'(0) = n \left[\Gamma^{A, \ldots A}(f^{[n]}) \right](K_0, \ldots, K_0).
$$
Hence, there exists $\mu > 0$ such that for any $|t| < \mu$,
\begin{equation}\label{psiepsilon0}
\|\psi_0(t) - \psi_0(0) - n t \left[\Gamma^{A, \ldots A}(f^{[n]}) \right](K_0, \ldots, K_0) \|_p \leq |t| \epsilon.
\end{equation}
Define $\tilde{\psi} : \mathbb{R} \to \mathcal{S}^p(\mathcal{H})$ by
$$
\tilde{\psi}(t) = \left[\Gamma^{A+tK_0, \ldots A+tK_0}(f^{[n-1]}) \right](K, \ldots, K), t\in \mathbb{R}.
$$
By Lemma $\ref{simplification5}$ we have
\begin{align*}
\tilde{\psi}(t) - \tilde{\psi}(0)
& = t \sum_{k=1}^n \left[ \Gamma^{(A+tK_0)^{n-k+1},(A)^k}(f^{[n]}) \right]((K)^{n-k}, K_0, (K)^{k-1}) \\
& = t \sum_{k=1}^n \left[ \Gamma^{(A+tK_0)^{n-k+1},(A)^k}(f^{[n]}) \right](K_0, \ldots, K_0) + t\psi_{\epsilon}(t) \\
& = \psi_0(t) - \psi_0(0) + t\psi_{\epsilon}(t)
\end{align*}
where
$$
\psi_{\epsilon}(t) = \sum_{k=1}^n \left( \Gamma_{A,K_0,k,t}((K)^{n-k}, K_0, (K)^{k-1}) - \Gamma_{A,K_0,k,t}(K_0,\ldots,K_0) \right).
$$
with $\Gamma_{A,K_0,k,t} = \Gamma^{(A+tK_0)^{n-k+1},(A)^k}(f^{[n]})$.
By Remark $\ref{Ineqmulti}$ and Theorem $\ref{BddnessMOI}$, there exists a constant $\alpha > 0$ depending only on $p, n, \|f^{(n)}\|_{\infty}$ and $\|K\|_p$ such that for any $1 \leq k \leq n$ and any $t\in \mathbb{R}$, 
$$
\| \Gamma_{A,K_0,k,t}((K)^{n-k}, K_0, (K)^{k-1}) - \Gamma_{A,K_0,k,t}(K_0, \ldots, K_0) \|_p \leq \alpha \epsilon
$$
so that we have the estimate $\| \psi_{\epsilon}(t) \| \leq n \alpha \epsilon.$ By the estimate \eqref{psiepsilon0} and triangle inequality, we deduce that for any $|t| < \mu$,
\begin{align}\label{psiepsilon4}
\begin{split}
& \| \tilde{\psi}(t) - \tilde{\psi}(0) - n t \left[\Gamma^{A, \ldots A}(f^{[n]}) \right](K_0, \ldots, K_0) \|_p \\
& \leq \| \psi_0(t) - \psi_0(0)  - n t \left[\Gamma^{A, \ldots A}(f^{[n]}) \right](K_0, \ldots, K_0) \|_p + |t| \| \psi_{\epsilon}(t) \|_p \\
& \leq |t| \epsilon (n\alpha + 1).
\end{split}
\end{align}

By Lemma $\ref{simplification5}$ we have
\begin{align*}
\psi(t) - \tilde{\psi}(t) = t \sum_{k=1}^{n} \left[ \Gamma^{(A+tK)^{n-k+1}, (A+tK_0)^k}(f^{[n]})\right]((K)^{n-k}, K-K_0, (K)^{k-1}).
\end{align*}
Hence, by Remark $\ref{Ineqmulti}$ and Theorem $\ref{BddnessMOI}$, there exists a constant $\beta>0$ depending only on $p, n, \|f^{(n)}\|_{\infty}$ and $\|K\|_p$ such that, for any $t\in \mathbb{R}$
\begin{equation}\label{psiepsilon2}
\| \psi(t) - \tilde{\psi}(t) \|_p \leq n \beta |t| \epsilon.
\end{equation}

Let also $\gamma>0$ be a constant depending only on $p, n, \|f^{(n)}\|_{\infty}$ and $\|K\|_p$ such that
\begin{equation}\label{psiepsilon3}
\| \left[\Gamma^{A, \ldots A}(f^{[n]}) \right](K, \ldots, K) - \left[\Gamma^{A, \ldots A}(f^{[n]}) \right](K_0, \ldots, K_0) \|_p \leq \gamma \epsilon.
\end{equation}

Finally, by triangle inequality and noting that $\psi(0) = \tilde{\psi}(0)$ we have, by \eqref{psiepsilon2}, \eqref{psiepsilon4} and \eqref{psiepsilon3}, 
\begin{align*}
& \| \psi(t) - \psi(0) - n t \left[\Gamma^{A, \ldots A}(f^{[n]}) \right](K, \ldots, K) \|_p \\
& \leq \| \psi(t) - \tilde{\psi}(t)\|_p \\
& \ \ \ \ + \| \tilde{\psi}(t) - \tilde{\psi}(0) - n t \left[\Gamma^{A, \ldots A}(f^{[n]}) \right](K_0, \ldots, K_0)\|_p \\
& \ \ \ \ + \| n t \left[\Gamma^{A, \ldots A}(f^{[n]}) \right](K_0, \ldots, K_0) - nt \left[\Gamma^{A, \ldots A}(f^{[n]}) \right](K, \ldots, K) \|_p \\
& \leq |t| \epsilon (n\beta + n\alpha + n\gamma  + 1),
\end{align*}
for any $|t| < \mu$. This concludes the proof.

\end{proof}

\begin{lemma}\label{simplification4}
Let $1 < p < \infty$, let $A \in \mathcal{B}(\mathcal{H})$ be a selfadjoint operator on $\mathcal{H}$ and let $K=K^* \in \mathcal{S}^p(\mathcal{H})$. Let $n\geq 2$ and let $f$ be $n$-times differentiable on $\mathbb{R}$ such that $f^{(n)}$ is bounded. Let $\nu : \mathbb{R} \to (\mathcal{B}(\mathcal{H}))_{\text{sa}}$ be such that $\nu(0)=A$ and $\nu$ is $\mathcal{S}^p$-differentiable in $0$ with $\nu'(0)=K$. Define $\psi, \tilde{\psi} : \mathbb{R} \to \mathcal{S}^p(\mathcal{H})$ by
$$
\psi(t) = \left[ \Gamma^{A+tK,\ldots,A+tK}(f^{[n-1]}) \right](K, \ldots, K), t\in \mathbb{R},
$$
and
$$
\tilde{\psi}(t) = \left[ \Gamma^{\nu(t),\ldots,\nu(t)}(f^{[n-1]}) \right](K, \ldots, K), t\in \mathbb{R}.
$$
If $\tilde{\psi}$ is differentiable in $0$, then $\psi$ is also differentiable in $0$ and $\psi'(0) = \tilde{\psi}'(0)$.
\end{lemma}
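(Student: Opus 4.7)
The plan is to split $\psi(t)-\psi(0)=\bigl(\psi(t)-\tilde{\psi}(t)\bigr)+\bigl(\tilde{\psi}(t)-\tilde{\psi}(0)\bigr)$. Since $\nu(0)=A$, we have $\psi(0)=\tilde{\psi}(0)$, and since $\tilde{\psi}$ is assumed differentiable at $0$, it suffices to show that
\[
\frac{1}{t}\bigl(\psi(t)-\tilde{\psi}(t)\bigr)\longrightarrow 0 \quad\text{in } \mathcal{S}^p(\mathcal{H}) \text{ as } t\to 0,
\]
which will yield the required $\psi'(0)=\tilde{\psi}'(0)$.

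To control $\psi(t)-\tilde{\psi}(t)$, I would telescope the operator indices one at a time. Set $\epsilon(t):=\frac{\nu(t)-A}{t}-K$, so that $\|\epsilon(t)\|_p\to 0$ by $\mathcal{S}^p$-differentiability of $\nu$ at $0$, and $(A+tK)-\nu(t)=-t\,\epsilon(t)\in\mathcal{S}^p(\mathcal{H})$ for small $|t|$. For $0\le j\le n$, put
\[
S_j(t):=\bigl[\Gamma^{(A+tK)^j,(\nu(t))^{n-j}}(f^{[n-1]})\bigr](K,\ldots,K),
\]
so that $S_n(t)=\psi(t)$, $S_0(t)=\tilde{\psi}(t)$, and $\psi(t)-\tilde{\psi}(t)=\sum_{j=1}^n(S_j(t)-S_{j-1}(t))$. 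Consecutive differences differ only in the $j$-th operator index (swapping $\nu(t)$ for $A+tK$), and Proposition \ref{simplification0}, applied at index $j$ with its $B$ equal to $A+tK$ and its $A$ equal to $\nu(t)$, yields
\[
S_j(t)-S_{j-1}(t)=-t\,\bigl[\Gamma^{(A+tK)^j,(\nu(t))^{n-j+1}}(f^{[n]})\bigr]\bigl(K,\ldots,K,\epsilon(t),K,\ldots,K\bigr),
\]
with $\epsilon(t)$ in the $j$-th input slot (the endpoint cases $j=1,n$ having the analogous form). Theorem \ref{BddnessMOI} then bounds the $\mathcal{S}^p$-norm of this MOI by $c_{p,n}\|f^{(n)}\|_\infty\|K\|_p^{n-1}\|\epsilon(t)\|_p$, so summing over $j$ and dividing by $|t|$ gives the required convergence.

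The one delicate point I expect to address is the applicability of Proposition \ref{simplification0}, which nominally requires $f^{(n-1)}$ bounded whereas only $f^{(n)}$ is assumed bounded. This is circumvented using that $A\in\mathcal{B}(\mathcal{H})$ and $\nu(t)\to A$ in operator norm (dominated by the $\mathcal{S}^p$-norm): the spectra of $A+tK$ and $\nu(t)$ stay in a common compact interval $[-M,M]$ for small $|t|$, so we may replace $f$ by a truncation $\tilde{f}$ that agrees with it on a slightly larger compact set and has all derivatives bounded. Every MOI appearing above depends only on this restriction and is therefore unchanged by the truncation — the same implicit reduction already used in the proofs of Lemmas \ref{simplification3} and \ref{simplification5}.
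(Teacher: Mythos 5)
Your proof is correct and follows essentially the same route as the paper: telescope $\psi(t)-\tilde{\psi}(t)$ over the operator indices, convert each consecutive difference into a multiple operator integral of $f^{[n]}$ against $(A+tK)-\nu(t)=-t\,\epsilon(t)$ via Proposition \ref{simplification0}, and bound it with Theorem \ref{BddnessMOI} so that $\frac{1}{t}(\psi(t)-\tilde{\psi}(t))\to 0$. Your explicit remark on truncating $f$ to justify the boundedness of $f^{(n-1)}$ required by Proposition \ref{simplification0} (legitimate here since $A$ is bounded and $\nu(t)\to A$ in operator norm) is a point the paper leaves implicit, and is a welcome addition.
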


\begin{proof}
Let $\epsilon > 0$. By assumption, there exists $\mu_1 > 0$ such that for any $|t| < \mu_1$,
\begin{equation}\label{nu1}
\| \tilde{\psi}(t) - \tilde{\psi}(0) - t \tilde{\psi}'(0) \|_p \leq |t| \epsilon.
\end{equation}
Moreover, $\nu(0)=A$ and $\nu'(0) = K$ in $\mathcal{S}^p(\mathcal{H})$, so there exists $\mu_2 > 0$ such that for any $|t| < \mu_2$,
\begin{equation*}
\left\| \frac{(A+tK) - \mu(t)}{t} \right\|_p = \left\| \frac{\mu(t)-A}{t} -K \right\|_p \leq \epsilon.
\end{equation*}
We have, by Proposition $\ref{simplification0}$,
\begin{align*}
& \dfrac{\psi(t) - \tilde{\psi}(t)}{t}  \\
& = \dfrac{1}{t} \sum_{k=1}^n \left[ \Gamma^{(A+tK)^{n-k+1}, (\nu(t))^{k-1}}(f^{[n-1]})-\Gamma^{(A+tK)^{n-k}, (\nu(t))^k}(f^{[n-1]}) \right](K, \ldots, K) \\
& = \sum_{k=1}^{n} \left[ \Gamma^{(A+tK)^{n-k+1}, (\nu(t))^k}(f^{[n]})\right]\left( (K)^{n-k}, \frac{(A+tK) - \mu(t)}{t} , (K)^{k-1} \right).
\end{align*}
Hence, by Remark $\ref{Ineqmulti}$ and Theorem $\ref{BddnessMOI}$, there exists a constant $\alpha > 0$ depending only on $p, n, \|f^{(n)}\|_{\infty}$ and $\|K\|_p$ such that, for any $t\in \mathbb{R}$,
\begin{equation}\label{nu2}
\| \psi(t) - \tilde{\psi}(t) \|_p \leq |t| n \alpha \epsilon.
\end{equation}
Finally, by \eqref{nu1} and \eqref{nu2} and by triangle inequality we have, noting that $\psi(0) = \tilde{\psi}(0)$,
\begin{align*}
\| \psi(t) - \psi(0) - t \tilde{\psi}'(0) \|_p
& \leq \| \psi(t) -  \tilde{\psi}(t)\|_p + \| \tilde{\psi}(t) - \tilde{\psi}(0) - t \tilde{\psi}'(0) \|_p \\
& \leq (n\alpha + 1) |t| \epsilon,
\end{align*}
for any $|t| < \text{min}(\mu_1, \mu_2)$, which proves the claim.

\end{proof}

The following lemma will allow us to reduce the question of differentiability of $\varphi$ defined in \eqref{defofvarphi} for an unbounded operator $A$ to the question of differentiability for a bounded operator.

\begin{lemma}\label{reducbdd}
Let $1 < p < \infty$, $A,K,Y$ be selfadjoint operators in $\mathcal{H}$ with $K$ bounded and $Y\in \mathcal{S}^p(\mathcal{H})$. Let $n \in\N, n\geq 1$ and let $f$ be $n$-times differentiable on $\mathbb{R}$ with $f^{(n)}$ bounded. Let $m\geq 1$ be an integer. We let $E_m = \chi_{[-m,m]}(A), A_m = AE_m, K_m = E_m K E_m$ and $Y_m = E_m Y E_m$. Then
\begin{equation}\label{reducbddeq}
\left[\Gamma^{A+K_m, \ldots, A+K_m}(f^{[n]}) \right](Y_m, \ldots, Y_m) 
= \left[\Gamma^{A_m+K_m, \ldots, A_m+K_m}(f^{[n]}) \right](Y_m, \ldots, Y_m).
\end{equation}
\end{lemma}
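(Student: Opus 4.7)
The plan is to exploit the fact that $E_m=\chi_{[-m,m]}(A)$ commutes with both $B:=A+K_m$ and $B':=A_m+K_m$, and that $B$ and $B'$ agree on the invariant subspace $E_m\mathcal{H}$. Indeed, $E_m$ commutes with $A$ as a spectral projection, and $E_mK_m=K_m=K_mE_m$ follows directly from $K_m=E_mKE_m$; hence $E_m$ lies in the commutant of both $B$ and $B'$. In the orthogonal decomposition $\mathcal{H}=E_m\mathcal{H}\oplus E_m^{\perp}\mathcal{H}$, the operators admit the block forms
$$
B = B_1\oplus A_{|E_m^{\perp}\mathcal{H}},\qquad B' = B_1\oplus 0,\qquad Y_m = \widetilde{Y}\oplus 0,
$$
where $B_1=(AE_m+K_m)_{|E_m\mathcal{H}}$ is bounded selfadjoint and $\widetilde{Y}$ is the restriction of $Y_m$ to $E_m\mathcal{H}$.

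First I would establish \eqref{reducbddeq} when $f^{[n]}$ is replaced by an elementary tensor $\phi=g_1\otimes\cdots\otimes g_{n+1}$ of bounded Borel functions on $\mathbb{R}$. The key observation is that for any bounded Borel $g$,
$$
g(B)Y_m = g(B')Y_m \qquad\text{and}\qquad Y_mg(B) = Y_mg(B').
$$
This is because $g(B)$ and $g(B')$ each commute with $E_m$ and agree on $E_m\mathcal{H}$, while $Y_m$ has range in $E_m\mathcal{H}$ and vanishes on $E_m^{\perp}\mathcal{H}$, so only the $E_m\mathcal{H}$-block of $g(B)$ (respectively $g(B')$) contributes. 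Applying \eqref{MOItensor} and iterating this identity along the product
$$
g_1(B)Y_m g_2(B)\cdots Y_m g_{n+1}(B) = g_1(B')Y_m g_2(B')\cdots Y_m g_{n+1}(B')
$$
yields \eqref{reducbddeq} for such elementary tensors.

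To extend to $f^{[n]}$, I would use a functional monotone class argument, after first reducing to $\mathcal{S}^2$-inputs. Approximate $Y$ in $\mathcal{S}^p$ by selfadjoint $Y^{(N)}\in\mathcal{S}^2\cap\mathcal{S}^p$ and set $Y_m^{(N)}:=E_mY^{(N)}E_m\in\mathcal{S}^2\cap\mathcal{S}^p$; then $Y_m^{(N)}\to Y_m$ in $\mathcal{S}^p$, and Theorem \ref{BddnessMOI} together with Remark \ref{Ineqmulti} imply that it suffices to prove \eqref{reducbddeq} with $Y_m$ replaced by $Y_m^{(N)}$ for each $N$. Fix $N$ and let $\mathcal{M}_N$ denote the set of bounded Borel $\phi:\mathbb{R}^{n+1}\to\mathbb{C}$ satisfying
$$
\bigl[\Gamma^{B,\ldots,B}(\phi)\bigr](Y_m^{(N)},\ldots,Y_m^{(N)}) = \bigl[\Gamma^{B',\ldots,B'}(\phi)\bigr](Y_m^{(N)},\ldots,Y_m^{(N)}).
$$
By the previous paragraph, $\mathcal{M}_N$ contains every elementary tensor of bounded Borel functions; it is clearly a vector space; and it is closed under bounded pointwise convergence, since dominated convergence gives simultaneous $w^*$-convergence in $L^{\infty}\bigl(\prod_i\lambda_B\bigr)$ and $L^{\infty}\bigl(\prod_i\lambda_{B'}\bigr)$, and the $w^*$-continuity of multiple operator integration on $\mathcal{S}^2$ (applicable because $Y_m^{(N)}\in\mathcal{S}^2$) transfers both sides to the limit. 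The functional monotone class theorem then forces $\mathcal{M}_N$ to equal the space of all bounded Borel functions on $\mathbb{R}^{n+1}$, so in particular $f^{[n]}\in\mathcal{M}_N$.

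The main technical obstacle is this last extension step: the two multiple operator integrals live over different scalar-valued spectral measures $\lambda_B$ and $\lambda_{B'}$, so an abstract $w^*$-density argument in a single space does not immediately suffice; bounded pointwise convergence on $\mathbb{R}^{n+1}$ is what simultaneously provides $w^*$-convergence in both measures and makes the monotone class step go through.
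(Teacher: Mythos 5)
Your argument is correct, and its skeleton matches the paper's: both proofs rest on the observation that $E_m$ commutes with $A+K_m$ and $A_m+K_m$, verify \eqref{reducbddeq} first for elementary tensors via \eqref{MOItensor}, pass to general symbols by $w^*$-continuity in the $\mathcal{S}^2$ setting, and finish by approximating $Y$ in $\mathcal{S}^p$ by elements of $\mathcal{S}^2\cap\mathcal{S}^p$ using the bounds of Theorem \ref{BddnessMOI}. Where you genuinely diverge is the middle extension step. The paper works with $g\in C_b(\mathbb{R})$ (citing \cite[(7.25)]{KPSS}), passes to the uniform closure of $C_b(\mathbb{R})\otimes\cdots\otimes C_b(\mathbb{R})$ to capture $C_0(\mathbb{R}^{n+1})$, then truncates to reach $C_b(\mathbb{R}^{n+1})$, and finally invokes the specific $w^*$-approximation $\varphi_{n,g_k}\to f^{[n]}$ with $g_k$ continuous from the proof of Theorem \ref{BddnessMOI}; the identity for $f^{[n]}$ is thus obtained only through its integral representation as a $w^*$-limit of continuous divided differences. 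You instead prove the block-diagonal identity $g(B)Y_m=g(B')Y_m$ directly for \emph{all} bounded Borel $g$ (which the direct-sum form of the functional calculus indeed gives, with no continuity needed) and then run a functional monotone class argument, correctly noting that bounded pointwise convergence yields simultaneous $w^*$-convergence with respect to both $\prod_i\lambda_{A+K_m}$ and $\prod_i\lambda_{A_m+K_m}$. This buys you the identity for every bounded Borel symbol on $\mathbb{R}^{n+1}$ at once, so $f^{[n]}$ is covered without any appeal to its structure; the price is invoking the monotone class theorem and keeping track of the $\mathcal{S}^2$ reduction before it, which you do. Both routes are sound; yours is somewhat more general and arguably cleaner, the paper's stays closer to the approximation machinery it has already set up.
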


\begin{proof}
We first assume that $Y\in \mathcal{S}^2(\mathcal{H})$. Note that the projection $E_m$ commutes with $A + K_m$ so that for any $g\in C_b(\mathbb{R})$ we have, by \cite[(7.25)]{KPSS},
$$
E_m g(A + K_m) = g(A_m + K_m) = g(A + K_m) E_m .
$$
From this equality, we easily deduce that for any $\phi \in C_b(\mathbb{R}) \otimes \cdots \otimes C_b(\mathbb{R})$,
\begin{equation}\label{reducbddeq2}
\left[\Gamma^{A+K_m, \ldots, A+K_m}(\phi) \right](Y_m, \ldots, Y_m) 
= \left[\Gamma^{A_m+K_m, \ldots, A_m+K_m}(\phi) \right](Y_m, \ldots, Y_m).
\end{equation}
By approximation, this implies that \eqref{reducbddeq2} holds true whenever $\phi$ belongs to the uniform closure of $C_b(\mathbb{R})\otimes \cdots \otimes  C_b(\mathbb{R})$, which contains in particular $C_0(\mathbb{R}^{n+1})$.

Assume now that $\phi \in C_b(\mathbb{R}^{n+1})$. Let
$(g_k)_{k\geq 1}$ be a sequence of functions in
$C_0(\mathbb{R})$ satisfying the following two properties:
$$
\forall\, k\in\N, \quad 0\leq g_k\leq 1,
\qquad\hbox{and}\qquad\forall\, r\in \mathbb{R},\quad
g_k(r)\overset{k\to\infty}{\longrightarrow} 1.
$$
For any $k\geq 1, \phi g_k \in C_0(\mathbb{R}^{n+1})$, so the latter implies that
\begin{equation}\label{reducbddeq1}
\left[\Gamma^{A+K_m, \ldots, A+K_m}(\phi g_k) \right](Y_m, \ldots, Y_m) 
= \left[\Gamma^{A_m+K_m, \ldots, A_m+K_m}(\phi g_k) \right](Y_m, \ldots, Y_m).
\end{equation}
By Lebesgue's dominated convergence theorem, the properties satisfied by the sequence $(g_k)_{k\geq 1}$ imply that $(\phi g_k)_{k\geq 1}$ converges to $\phi$ for the $w^*$-topology of $L^{\infty}\left(\prod_{i=1}^n \lambda_{A + K_m}\right)$ and $L^{\infty}\left(\prod_{i=1}^n \lambda_{A_m + K_m}\right)$. Hence, by the $w^*$-continuity of multiple operator integrals, we obtain, by taking the limit on $k$ in \eqref{reducbddeq1},
\begin{equation}\label{reducbddeq3}
\left[\Gamma^{A+K_m, \ldots, A+K_m}(\phi) \right](Y_m, \ldots, Y_m) 
= \left[\Gamma^{A_m+K_m, \ldots, A_m+K_m}(\phi) \right](Y_m, \ldots, Y_m).
\end{equation}

For any $k\geq 1$, let $\phi_k = \varphi_{n,g_k}$ as defined in the proof of Theorem $\ref{BddnessMOI}$. Then $(\varphi_{n,g_k})_{k\geq 1} \subset C_b(\mathbb{R}^{n+1})$ and the sequence $w^*$-converges to $f^{[n]}$ for the $w^*$-topologies of $L^{\infty}\left(\prod_{i=1}^n \lambda_{A + K_m}\right)$ and $L^{\infty}\left(\prod_{i=1}^n \lambda_{A_m + K_m}\right)$. Hence, $\phi_k$ satisfies \eqref{reducbddeq3} for any $k\geq 1$ and by the $w^*$-continuity of multiple operator integrals, we get that $\phi$ satisfies \eqref{reducbddeq}. 

In the case $1 < p < \infty$, we approximate $Y \in \mathcal{S}^p(\mathcal{H})$ by a sequence $(Y_j)_{j\geq 1}$ of elements of $\mathcal{S}^2(\mathcal{H}) \cap \mathcal{S}^p(\mathcal{H})$ and then pass to the limit in the equality
\begin{align*}
&\left[\Gamma^{A+K_m, \ldots, A+K_m}(f^{[n]}) \right]((Y_j)_m, \ldots, (Y_j)_m) \\
& \ \ \ \ = \left[\Gamma^{A_m+K_m, \ldots, A_m+K_m}(f^{[n]}) \right]((Y_j)_m, \ldots, (Y_j)_m)
\end{align*}
as $j \to \infty$, using the estimate in Theorem $\ref{BddnessMOI}$ and the fact that $(Y_j)_m \underset{j \to \infty}{\longrightarrow} Y_m$ in $\mathcal{S}^p(\mathcal{H})$.
\end{proof}

\subsection{Proofs of the main results}\label{ProofofMR}

We now turn to the proof of the main results of this paper, stated in Subsection $\ref{SecMR}$.

\begin{proof}[Proof of Theorem $\ref{FormulaSA}$]
The assumptions on $f$ ensure, by \cite[Theorem 7.13]{KPSS}, that $\varphi$ is differentiable on $\mathbb{R}$ and that for any $t\in \mathbb{R}$,
$$
\varphi'(t) = \left[\Gamma^{A+tK, A+tK}(f^{[1]}) \right](K).
$$
Assume now that $\varphi$ is $(n-1)$-times differentiable on $\mathbb{R}$ with
$$
\dfrac{\varphi^{(n-1)}(t)}{(n-1)!} = \left[\Gamma^{A+tK, \ldots, A+tK}(f^{[n-1]}) \right](K, \ldots, K).
$$
We have to show that the function
$$
\psi : t\in \mathbb{R} \mapsto \left[\Gamma^{A+tK, \ldots, A+tK}(f^{[n-1]}) \right](K, \ldots, K)
$$
is differentiable and that for any $t\in \mathbb{R}$,
$$
\psi'(t) = n \left[\Gamma^{A+tK, \ldots, A+tK}(f^{[n]}) \right](K, \ldots, K).
$$
It is clear that we only have to prove the differentiability in $0$, from which we can deduce the differentiabily on $\mathbb{R}$. In this case, by Lemma $\ref{simplification3}$, it is sufficient to prove the differentiability for $K$ belonging to a dense subset of $(\mathcal{S}^p(\mathcal{H}))_{\text{sa}}$. By \cite[Proposition 6.2]{KPSS}, the subspace $X_0$ defined by
$$
X_0 = \left\lbrace i[A,Y] + Z \ \text{with} \ Y,Z \in (\mathcal{S}^p(\mathcal{H}))_{\text{sa}} \ \text{and} \ Z \ \text{commutes with} \ A \right\rbrace \subset (\mathcal{S}^p(\mathcal{H}))_{\text{sa}}
$$
is dense in $(\mathcal{S}^p(\mathcal{H}))_{\text{sa}}$. Let $K = i[A,Y] + Z \in X_0$ and show that
$$
\psi_0 : t\in \mathbb{R} \mapsto \left[\Gamma^{A+tK, \ldots A+tK}(f^{[n-1]}) \right](K, \ldots, K)
$$
is differentiable in $0$ with
$$
\psi_0'(0) = n \left[\Gamma^{A, \ldots, A}(f^{[n]}) \right](K, \ldots, K).
$$
Let $\nu(t) = e^{-itY}(A+tZ)e^{itY}$. We have $\nu(0)=A$ and $\nu$ is $\mathcal{S}^p$-differentiable in $0$ with $\nu'(0)=K$. Hence, by Lemma $\ref{simplification4}$, to prove the latter, it is equivalent to prove that $\tilde{\psi} : \mathbb{R} \to \mathcal{S}^p(\mathcal{H})$ defined by
$$
\tilde{\psi}(t) = \left[ \Gamma^{\nu(t),\ldots,\nu(t)}(f^{[n-1]}) \right](K, \ldots, K), t\in \mathbb{R},
$$
is differentiable in $0$ with
$$
\tilde{\psi}'(0) = n \left[\Gamma^{A, \ldots, A}(f^{[n]}) \right](K, \ldots, K).
$$
We have, by Proposition $\ref{simplification0}$,
\begin{align*}
& \dfrac{\tilde{\psi}(t) - \tilde{\psi}(0)}{t}  \\
& = \dfrac{1}{t} \sum_{k=1}^n \left[ \Gamma^{(\nu(t))^{n-k+1}, (A)^{k-1}}(f^{[n-1]})-\Gamma^{(\nu(t))^{n-k}, (A)^k}(f^{[n-1]}) \right](K, \ldots, K) \\
& = \sum_{k=1}^{n} \left[ \Gamma^{(\nu(t))^{n-k+1}, (A)^k}(f^{[n]})\right]\left( (K)^{n-k}, \frac{\nu(t) - A}{t} , (K)^{k-1} \right).
\end{align*}
For $1\leq k\leq n$ and any $t\neq 0$, let
$$\displaystyle \Gamma_{\nu,k}(t) = \left[ \Gamma^{(\nu(t))^{n-k+1}, (A)^k}(f^{[n]})\right]\left( (K)^{n-k}, \frac{\nu(t) - A}{t} , (K)^{k-1} \right).$$
Since $\frac{\nu(t) - A}{t}$ goes to $K$ in $\mathcal{S}^p(\mathcal{H})$ as $t$ goes to $0$, by uniform boundedness of $\Gamma^{(\nu(t))^{n-k+1}, (A)^k}(f^{[n]}) \in \mathcal{B}_n(\mathcal{S}^p), t\in \mathbb{R}$, we deduce that if one of those limits exists, so does the second one and we have
$$
\underset{t \to 0}{\lim} \ \Gamma_{\nu,k}(t) = \underset{t \to 0}{\lim} \ \left[ \Gamma^{(\nu(t))^{n-k+1}, (A)^k}(f^{[n]})\right]\left( K,\ldots,K \right).
$$
Note that
\begin{align*}
& \left[ \Gamma^{(\nu(t))^{n-k+1}, (A)^k}(f^{[n]})\right]\left( K,\ldots,K \right) \\
& \ \ \ \ \ \ = e^{-itY}\left[ \Gamma^{(A+tZ)^{n-k+1}, (A)^k}(f^{[n]})\right]\left((e^{itY}Ke^{-itY})^{n-k},e^{itY}K, (K)^{k-1} \right).
\end{align*}
In fact, more generally, for any $t\in \mathbb{R}$, for any $g\in \text{Bor}(\mathbb{R}^{n+1})$ such that $\Gamma^{(\nu(t))^{n-k+1}, (A)^k}(g) \in \mathcal{B}_n(\mathcal{S}^p)$ and any $X \in  \mathcal{S}^p(\mathcal{H})$, we have
\begin{align*}
& \left[ \Gamma^{(\nu(t))^{n-k+1}, (A)^k}(g)\right]\left(X, \ldots, X \right) \\
& \ \ \ \ \ \ = e^{-itY}\left[ \Gamma^{(A+tZ)^{n-k+1}, (A)^k}(g)\right]\left((e^{itY}Xe^{-itY})^{n-k},e^{itY}X, (X)^{k-1} \right).
\end{align*}
Indeed, when $g$ is an element of $\text{Bor}(\mathbb{R}) \otimes \cdots \otimes \text{Bor}(\mathbb{R})$, this equality is a consequence of the fact that for any $h \in \text{Bor}(\mathbb{R})$, $h(e^{-itY}(A+tZ)e^{itY}) = e^{-itY}h(A+tZ)e^{itY}$. Hence, if $p=2$, the general case follows from the $w^*$-continuity of multiple operator integrals. If $1<p<\infty$, we approximate $X\in \mathcal{S}^p(\mathcal{H})$ by elements of $\mathcal{S}^2(\mathcal{H}) \cap \mathcal{S}^p(\mathcal{H})$. Details are left to the reader.

Now, when $t$ goes to $0$, $e^{-itY} \to 1$ in $\mathcal{B}(\mathcal{H})$ so that $e^{itY}Ke^{-itY}, e^{itY}K \to K$ in $\mathcal{S}^p(\mathcal{H})$. Hence, by uniform boundedness of $\Gamma^{(A+tZ)^{n-k+1}, (A)^k}(f^{[n]}) \in \mathcal{B}_n(\mathcal{S}^p), t\in \mathbb{R}$, we have that if one of those limits exists, so does the second one and then
\begin{align*}
& ~ \ \ \ \underset{t \to 0}{\lim} \ e^{-itY}\left[ \Gamma^{(A+tZ)^{n-k+1}, (A)^k}(f^{[n]})\right]\left((e^{itY}Ke^{-itY})^{n-k},e^{itY}K, (K)^{k-1} \right) \\
& = \underset{t \to 0}{\lim} \ \left[ \Gamma^{(A+tZ)^{n-k+1}, (A)^k}(f^{[n]})\right]\left(K, \ldots, K \right).
\end{align*}
Define
$$
\xi(t) = \displaystyle \sum_{k=1}^n \left[ \Gamma^{(A+tZ)^{n-k+1}, (A)^k}(f^{[n]})\right] (K, \ldots, K), t \in \mathbb{R}.
$$
The latter implies that if $\xi$ has a limit in $0$, then so does $\dfrac{\tilde{\psi}(t) - \tilde{\psi}(0)}{t}$ with the same limit. Hence, in order to prove Formula \eqref{diffenrential}, we have to show that $\xi$ has a limit in $0$ and that
$$
\underset{t \to 0}{\lim} \ \xi(t) = n \left[\Gamma^{A, \ldots, A}(f^{[n]}) \right](K, \ldots, K) \ \ \text{in} \ \ \mathcal{S}^p(\mathcal{H}).
$$
For any $1\leq k\leq n$ and any $t\in \mathbb{R}$, let $\Gamma_k(t) = \Gamma^{(A+tZ)^{n-k+1}, (A)^k}(f^{[n]}).$
Since $K = i[A,Y] + Z$, we have, for any $1\leq k \leq n$,
\begin{align*}
\left[\Gamma_k(t) \right](K,\ldots,K)
&  = \left[ \Gamma_k(t) \right] (Z, \ldots, Z) \\
&  + \sum_{j=1}^n \ \sum_{\substack{K_m = i[A,Y] \\\text{or} \ K_m=Z, \\ 1 \leq m \leq n-1}} i \left[ \Gamma_k(t) \right] (K_1, \ldots, K_{j-1}, [A,Y], K_j, \ldots, K_{n-1})
\end{align*}

Hence,
\begin{align*}
\xi(t) 
& = \sum_{k=1}^{n} \left[ \Gamma_k(t) \right] (Z, \ldots, Z) \\
& + \sum_{k=1}^{n} \sum_{j=1}^n \ \sum_{\substack{K_m = i[A,Y] \\\text{or} \ K_m=Z, \\ 1 \leq m \leq n-1}} i \left[ \Gamma_k(t) \right] (K_1, \ldots, K_{j-1}, [A,Y], K_j, \ldots, K_{n-1}).
\end{align*}
By Lemma $\ref{simplification5}$ and Lemma $\ref{MOIcomm}$ we have
\begin{align*}
\sum_{k=1}^{n} \left[ \Gamma_k(t) \right] (Z, \ldots, Z)
& = \dfrac{1}{t} \left[ \Gamma^{A+tZ,\ldots, A+tZ}(f^{[n-1]}) - \Gamma^{A,\ldots, A}(f^{[n-1]}) \right](Z, \ldots, Z) \\
& = \dfrac{1}{(n-1)!} \ \dfrac{f^{(n-1)}(A+tZ)-f^{(n-1)}(A) }{t} \ Z^{n-1}.
\end{align*}
By \cite[Lemma 3.4 (ii)]{KS}, this quantity converges as $t$ goes to $0$ in $\mathcal{S}^p(\mathcal{H})$ to
$$
\dfrac{1}{(n-1)!} \ f^{(n)}(A) Z^n.
$$
Since $n! f^{[n]}(x,\ldots,x) = f^{(n)}(x)$, the latter is in turn, by Lemma $\ref{MOIcomm}$, equal to
$$
n \left[\Gamma^{A,\ldots, A}(f^{[n]}) \right](Z, \ldots, Z).
$$

We will now show that for any $1\leq j,k \leq n$ and for any $K_1, \ldots, K_{n-1}$ with $K_m= Z$ or $i[A,Z]$, $1\leq m \leq n-1$, 
$$
\left[\Gamma_k(t)\right](K_1, \ldots, K_{j-1}, [A,Y], K_j, \ldots, K_{n-1})
$$
goes to $\left[\Gamma^{A,\ldots, A}(f^{[n]}) \right](K_1, \ldots, K_{j-1}, [A,Y], K_j, \ldots, K_{n-1})$ in $\mathcal{S}^p(\mathcal{H})$ as $t$ goes to $0$.

Assume first that $n-k+2 \leq j \leq n$. Since $A$ and $Z$ are bounded operators, we have, by Remark $\ref{approximatebdd}$,
\begin{align*}
& \left[\Gamma_k(t)\right](K_1, \ldots, K_{j-1}, [A,Y], K_j, \ldots, K_{n-1}) \\
& = \left[\Gamma^{(A+tZ)^{n-k+1},(A)^{k-1}}(f^{[n-1]}) \right](K_1, \ldots, K_{j-1}, YK_j, K_{j+1}, \ldots, K_{n-1}) \\
& \ \ \ - \left[\Gamma^{(A+tZ)^{n-k+1},(A)^{k-1}}(f^{[n-1]}) \right](K_1, \ldots, K_{j-1}, YK_{j-1}, K_j, \ldots, K_{n-1}),
\end{align*}
with a simple modification in the case $j=n$. By Lemma $\ref{simplification5}$ the latter converges, as $t$ goes to $0$, to
\begin{align*}
&\left[\Gamma^{A, \ldots, A}(f^{[n-1]}) \right](K_1, \ldots, K_{j-1}, YK_j, K_{j+1}, \ldots, K_{n-1}) \\
& - \left[\Gamma^{A, \ldots, A}(f^{[n-1]}) \right](K_1, \ldots, K_{j-1}, YK_{j-1}, K_j, \ldots, K_{n-1}),
\end{align*}
which is in turn equal to $\left[\Gamma^{A,\ldots, A}(f^{[n]}) \right](K_1, \ldots, K_{j-1}, [A,Y], K_j, \ldots, K_{n-1})$.

Assume now that $j=n-k+1$. In this case, by Remark $\ref{approximatebdd}$,
\begin{align*}
& \left[\Gamma^{(A+tZ)^{n-k+1},(A)^k}(f^{[n]}) \right](K_1, \ldots, K_{j-1}, (A+tZ)Y - YA, K_j, \ldots, K_{n-1}) \\
& = \left[\Gamma^{(A+tZ)^{n-k+1},(A)^{k-1}}(f^{[n-1]}) \right](K_1, \ldots, K_{j-1}, YK_j, K_{j+1}, \ldots, K_{n-1}) \\
& \ \ \ - \left[\Gamma^{(A+tZ)^{n-k+1},(A)^{k-1}}(f^{[n-1]}) \right](K_1, \ldots, K_{j-1}, YK_{j-1}, K_j, \ldots, K_{n-1}).
\end{align*}
Since $(A+tZ)Y - YA = [A,Y] +tZY$, we get
\begin{align*}
& \left[\Gamma_k(t)\right](K_1, \ldots, K_{j-1}, [A,Y], K_j, \ldots, K_{n-1}) \\
& = \left[\Gamma^{(A+tZ)^{n-k+1},(A)^{k-1}}(f^{[n-1]}) \right](K_1, \ldots, K_{j-1}, YK_j, K_{j+1}, \ldots, K_{n-1}) \\
& \ \ \ - \left[\Gamma^{(A+tZ)^{n-k+1},(A)^{k-1}}(f^{[n-1]}) \right](K_1, \ldots, K_{j-1}, YK_{j-1}, K_j, \ldots, K_{n-1}) \\
& \ \ \ - t \left[\Gamma^{(A+tZ)^{n-k+1},(A)^k}(f^{[n]}) \right](K_1, \ldots, K_{j-1}, ZY, K_j, \ldots, K_{n-1}).
\end{align*}
The inequality
\begin{align*}
& \| t \left[\Gamma^{(A+tZ)^{n-k+1},(A)^k}(f^{[n]}) \right](K_1, \ldots, K_{j-1}, ZY, K_j, \ldots, K_{n-1}) \|_p \\
& \leq |t| c_{p,n} \|f^{(n)}\|_{\infty} \|K_1\|_p \ldots \|K_{n-1}\|_p \|ZY\|_p
\end{align*}
and the same reasoning as for the case $n-k+2 \leq j \leq n$ show that
$$
\left[\Gamma_k(t)\right](K_1, \ldots, K_{j-1}, [A,Y], K_j, \ldots, K_{n-1})
$$
converges to $\left[\Gamma^{A,\ldots, A}(f^{[n]}) \right](K_1, \ldots, K_{j-1}, [A,Y], K_j, \ldots, K_{n-1})$.

Finally, assume that $1\leq j \leq n-k$. By Remark $\ref{approximatebdd}$,
\begin{align*}
& \left[\Gamma^{(A+tZ)^{n-k+1},(A)^k}(f^{[n]}) \right](K_1, \ldots, K_{j-1}, [A+tZ,Y], K_j, \ldots, K_{n-1}) \\
& = \left[\Gamma^{(A+tZ)^{n-k+1},(A)^{k-1}}(f^{[n-1]}) \right](K_1, \ldots, K_{j-1}, YK_j, K_{j+1}, \ldots, K_{n-1}) \\
& \ \ \ - \left[\Gamma^{(A+tZ)^{n-k+1},(A)^{k-1}}(f^{[n-1]}) \right](K_1, \ldots, K_{j-1}, YK_{j-1}, K_j, \ldots, K_{n-1}),
\end{align*}
with a simple modification in the case $j=1$ as in Remark $\ref{approximatebdd}$. Note that $[A+tZ,Y] = [A,Y] + t[Z,Y]$ and then reason as in the previous case to show that
$$
\left[\Gamma_k(t)\right](K_1, \ldots, K_{j-1}, [A,Y], K_j, \ldots, K_{n-1})
$$
goes to $\left[\Gamma^{A,\ldots, A}(f^{[n]}) \right](K_1, \ldots, K_{j-1}, [A,Y], K_j, \ldots, K_{n-1})$ in $\mathcal{S}^p(\mathcal{H})$ as $t$ goes to $0$.
Hence, we proved that
\begin{align*}
& \underset{t \to 0}{\lim} \ \xi(t) \\
& = n \left[\Gamma^{A,\ldots, A}(f^{[n]}) \right](Z, \ldots, Z) \\
& \ \ \ \ + \sum_{k=1}^n \sum_{j=1}^n \ \sum_{\substack{K_m = i[A,Y] \\\text{or} \ K_m=Z, \\ 1 \leq m \leq n-1}} i \left[ \Gamma^{A, \ldots, A}(f^{[n]}) \right] (K_1, \ldots, K_{j-1}, [A,Y], K_j, \ldots, K_{n-1}) \\
& = n \left[\Gamma^{A,\ldots, A}(f^{[n]}) \right](K, \ldots, K).
\end{align*}
This proves that $\varphi \in D^n(\mathbb{R},\Sc^p(\H))$ and that for any $1\leq k \leq n$, $\varphi^{(k)}$ is given by \eqref{diffenrential}.

Finally, let $I\subset \mathbb{R}$ be a bounded interval and let $1 \leq i \leq n-1$. We let $J \subset \mathbb{R}$ to be a bounded interval such that, for any $t\in I$, $\sigma(A+tK) \subset J$. There exists $f_i \in C^i(\mathbb{R})$ compactly supported such that $f_i = f$ on $J$. Then for any $t\in I$,
\begin{align*}
\dfrac{1}{k!} \varphi^{(i)}(t)
& = \left[ \Gamma^{A+tK, A+tK, \ldots, A+tK}((f^{[i]})_{|J^{n+1}}) \right] (K, \ldots, K) \\
& = \left[ \Gamma^{A+tK, A+tK, \ldots, A+tK}(f_i^{[i]}) \right] (K, \ldots, K).
\end{align*}

Hence, since $f_i^{(i)}$ is bounded on $\mathbb{R}$, $\varphi^{(i)}$ is bounded on $I$ by Theorem $\ref{BddnessMOI}$. Similarly, since $f^{(n)}$ is bounded on $\mathbb{R}$, $\varphi^{(n)}$ is bounded on $\mathbb{R}$.

\end{proof}

We now turn to the proof of Theorem $\ref{FormulaSAunbdd}$.

\begin{proof}[Proof of Theorem $\ref{FormulaSAunbdd}$]
By \cite[Theorem 7.18]{KPSS}, $\varphi$ is differentiable on $\mathbb{R}$ and for any $t\in \mathbb{R}$, $\varphi'(t) = \left[\Gamma^{A+tK, A+tK}(f^{[1]}) \right](K).$ 
Assume now that $\varphi$ is $(n-1)$-times differentiable on $\mathbb{R}$ with
$$
\dfrac{\varphi^{(n-1)}(t)}{(n-1)!} = \left[\Gamma^{A+tK, \ldots, A+tK}(f^{[n-1]}) \right](K, \ldots, K).
$$
We will prove that the function
$$
\psi : t\in \mathbb{R} \mapsto \left[\Gamma^{A+tK, \ldots, A+tK}(f^{[n-1]}) \right](K, \ldots, K)
$$
is differentiable in $0$ and that
\begin{equation}\label{unbbproof}
\psi'(0) = n \left[\Gamma^{A, \ldots, A}(f^{[n]}) \right](K, \ldots, K).
\end{equation}

For any $m\geq 1$, let $E_m = \chi_{[-m,m]}(A)$. Then $A_m := AE_m$ is bounded. Note that $(E_m)_{m\geq 1}$ converges strongly to the identity so for any $K\in \mathcal{S}^p(\mathcal{H})$, $K_m:=E_m K E_m$ converges to $K$ in $\mathcal{S}^p(\mathcal{H})$ as $m$ goes to $\infty$. This implies that the set
$$
X_0 = \left\lbrace K_m \ | \ K \in (\mathcal{S}^p(\mathcal{H}))_{\text{sa}}, m\geq 1 \right\rbrace \subset (\mathcal{S}^p(\mathcal{H}))_{\text{sa}}
$$
is dense in $(\mathcal{S}^p(\mathcal{H}))_{\text{sa}}$. Hence, by Lemma $\ref{simplification3}$, we only have to prove \eqref{unbbproof} for $K$ element of $X_0$. Let $K=K_m \in X_0$ for some $m\geq 1$.
By Lemma $\ref{reducbdd}$, we have, for any $t\in \mathbb{R}$,
$$
\psi(t) = \left[\Gamma^{A_m+tK_m, \ldots, A_m+tK_m}(f^{[n-1]}) \right](K_m, \ldots, K_m)
$$
which is, by Theorem $\ref{FormulaSA}$, differentiable in $0$ with
$$
\psi'(0) = n\left[\Gamma^{A_m, \ldots, A_m}(f^{[n]}) \right](K_m, \ldots, K_m).
$$
Using Lemma $\ref{reducbdd}$ again, we see that
$$
\psi'(0) = n\left[\Gamma^{A, \ldots, A}(f^{[n]}) \right](K_m, \ldots, K_m).
$$
This proves that $\varphi \in D^n(\mathbb{R},\Sc^p(\H))$ and that the derivatives of $\varphi$ are given by \eqref{diffenrentialunbdd}.

Finally, the boundedness of the derivatives follows from Theorem $\ref{BddnessMOI}$.

\end{proof}

\begin{proof}[Proof of Proposition $\ref{Taylorrmd}$]
The existence of the derivatives $\displaystyle \frac{d^k}{dt^k}\Big(f(A+tK)\Big)\Big|_{t=0}, k=1, \ldots, n-1,$ are ensured by Theorem $\ref{FormulaSA}$ and Theorem $\ref{FormulaSAunbdd}$. The representation \eqref{Taylor} can be obtained by induction on $n$, using Theorem $\ref{BddnessMOI}$. See the proof of \cite[Theorem 4.1 (ii)]{CLSS} for more details. From this representation, we obtain the estimate \eqref{Taylorineq} by applying Inequality \eqref{IneqDDn}.

\end{proof}

\begin{proof}[Proof of Proposition $\ref{Contcase}$]
By Theorem $\ref{FormulaSA}$, we know that $\varphi$ is $n$-times differentiable and that for any $t\in \mathbb{R}$,
\begin{equation*}
\dfrac{1}{n!} \varphi^{(n)}(t) = \left[ \Gamma^{A+tK, A+tK, \ldots, A+tK}(f^{[n]}) \right] (K, \ldots, K).
\end{equation*}
Hence, to prove the result, we have to show that the mapping
$$
t \in \mathbb{R} \mapsto \left[ \Gamma^{A+tK, A+tK, \ldots, A+tK}(f^{[n]}) \right] (K, \ldots, K) \in \mathcal{S}^p(\mathcal{H})
$$
is continuous. More generally, we will prove that for any $X = (X_1, \ldots, X_n) \in \mathcal{S}^p(\mathcal{H})^n$, the mapping
$$
\varphi_X : t \in \mathbb{R} \mapsto \left[ \Gamma^{A+tK, A+tK, \ldots, A+tK}(f^{[n]}) \right] (X_1, \ldots, X_n) \in \mathcal{S}^p(\mathcal{H})
$$
is continuous. Note that it is sufficient to prove that $\varphi_X$ is continuous in $0$.

Let $h \in C^n(\mathbb{R})$.  Assume that either $A$ is bounded or $h^{(i)}$ is bounded for all $1\leq i\leq n-1$ and further assume that $h^{(n)} \in C_0(\mathbb{R})$. It follows from \cite[Theorem 3.4]{LMS} that $h$ is $n$-times continuously Fr\'echet $\mathcal{S}^p$-differentiable at $A$. This implies that for any $X_1, \ldots, X_n \in \mathcal{S}^p(\mathcal{H})$, the mapping
\begin{equation}\label{casefnC0}
t\in \mathbb{R} \mapsto \left[ \Gamma^{A+tK, A+tK, \ldots, A+tK}(h^{[n]}) \right] (X_1, \ldots, X_n)
\end{equation}
is continuous in $0$. Hence, if $f^{(n)} \in C_0(\mathbb{R})$, $\varphi \in C^n(\mathbb{R},\Sc^p(\H))$. The rest of the proof consists in reducing to this particular case.\\

Let $(g_k)_{k\geq 1}$ be a sequence of $C^{\infty}_c(\mathbb{R})$ satisfying the following two properties:
$$
\forall\, k\in\N, \quad 0\leq g_k\leq 1,
\qquad\hbox{and}\qquad\forall\, r\in \mathbb{R},\quad
g_k(r)\overset{k\to\infty}{\longrightarrow} 1.
$$
Let $k\geq 1$. Define $G_k^n = g_k \otimes \underbrace{1 \otimes \cdots \otimes 1}_{n-1 \ \text{times}} \otimes g_k$ and write
$$
G_k^n(x_1, \ldots, x_n) = h_1(x_1,x_2) h_2(x_{n-1},x_n)
$$
where $h_1 = g_k \otimes 1$ and $h_2 = 1 \otimes g_k$. We have
$$
\left[\Gamma^{A+tK, A+tK}(h_1)\right](X_1) = g_k(A+tK) X_1
$$
and
$$\left[\Gamma^{A+tK, A+tK}(h_2)\right](X_n) = X_n g_k(A+tK).
$$
Hence, by Lemma $\ref{simplification1}$, the function
$$
\varphi_{k,X}^n : t \in \mathbb{R} \mapsto \left[ \Gamma^{A+tK, A+tK, \ldots, A+tK}(f^{[n]}G_k^n) \right] (X_1, \ldots, X_n) \in \mathcal{S}^p(\mathcal{H})
$$
satisfies, for any $t\in \mathbb{R}$,
$$
\varphi_{k,X}^n(t) = \left[ \Gamma^{A+tK, A+tK, \ldots, A+tK}(f^{[n]}) \right](g_k(A+tK) X_1, X_2, \ldots, X_{n-1}, X_n g_k(A+tK)).
$$
$(g_k)_{k\geq 1}$ is bounded and pointwise convergent to $1$ so $(g_k(A))_{k\geq 1}$ converges strongly to the identity of $\mathcal{H}$ and hence $g_k(A) X_1 \to X_1$ and $X_n g_k(A) \to X_n$ in $\mathcal{S}^p(\mathcal{H})$ as $k\to \infty$. Moreover, it follows from the arguments of the proof of \cite[Lemma 3.4]{CLSS} that $A + tK \to A$ resolvent strongly as $t \to 0$. This means that for any $u \in C_b(\mathbb{R})$,
$$
u(A+tK) \underset{t \to 0}{\longrightarrow} u(A) \ \ \text{strongly}.
$$
For any $k\geq 1, g_k \in C_b(\mathbb{R})$ so $g_k(A+tK) \to g_k(A)$ strongly as $t \to 0$, which implies that $g_k(A+tK) X_1 \to g_k(A) X_1$ and $X_n g_k(A+tK) \to X_n g_k(A)$ in $\mathcal{S}^p(\mathcal{H})$ as $t\to 0$.

Let $\epsilon > 0$. By the latter, there exists $k_0 \in \mathbb{N}$ such that, for any $k\geq k_0$,
$$
\| X_1 - g_{k_0}(A) K\|_p \leq \epsilon \ \ \text{and} \ \ \| X_n - X_n g_{k_0}(A) \|_p \leq \epsilon
$$
and there exists $t_0 > 0$ such that for any $|t| \leq t_0$,
$$
\| g_{k_0}(A)X_1 - g_{k_0}(A+tK) X_1\|_p \leq \epsilon \ \ \text{and} \ \ \| X_n g_{k_0}(A) - X_n g_{k_0}(A+tK) \|_p \leq \epsilon.
$$
Hence, for any $|t| \leq t_0$,
\begin{align*}
& \| X_1 - g_{k_0}(A+tK) X_1\|_p \\
& \leq \| X_1 - g_{k_0}(A) X_1\|_p + \| g_{k_0}(A)X_1 - g_{k_0}(A+tK) X_1\|_p \\
& \leq 2 \epsilon,
\end{align*}
and similarly,
$$
\| X_n - X_n g_{k_0}(A+tK) \|_p \leq 2\epsilon.
$$
By Remark $\ref{Ineqmulti}$ and Theorem $\ref{BddnessMOI}$, there exists a constant $C > 0$ depending only on $p, n, \|f^{(n)}\|_{\infty}$ and $\|K\|_p$ such that, for any $|t| < t_0$,
$$
\| \varphi_X(t) - \varphi_{k_0,X}^n(t) \|_p \leq C \epsilon.
$$
By the triangle inequality we get that for any $|t| < t_0$,
\begin{align*}
& \| \varphi_X(t) - \varphi_X(0) \|_p \\
& \leq \| \varphi_X(t) - \varphi_{k_0,X}^n(t) \|_p + \|\varphi_{k_0,X}^n(t) - \varphi_{k_0,X}^n(0) \|_p + \| \varphi_{k_0,X}^n(0) - \varphi_X(0)\|_p \\
& \leq 2C \epsilon + \|\varphi_{k_0,X}^n(t) - \varphi_{k_0,X}^n(0) \|_p.
\end{align*}
Hence, to prove the result, it suffices to prove that for any $k\geq 1$ and any $X \in \mathcal{S}^p(\mathcal{H})^n, \varphi_{k,X}^n$ is continuous in $0$.\\

Fix $k\geq 1$ and let $g = g_k$. We will prove the continuity of $\varphi_{k,X}^n$ in $0$ by induction on $n$. For $n=1$, we have, for any $(x_0,x_1)\in \mathbb{R}^2$ with $x_0 \neq x_1$,
\begin{align}\label{calculsn1}
\begin{split}
f^{[1]}(x_0,x_1)g(x_0)g(x_1)
& = \dfrac{g(x_1)(gf)(x_0) - (gf)(x_1) g(x_0)}{x_0 - x_1} \\
& = \dfrac{g(x_1)(gf)(x_0) - g(x_1)(gf)(x_1)}{x_0 - x_1} \\
& \ \ \ \ + \dfrac{(gf)(x_1)g(x_1) - (gf)(x_1) g(x_0)}{x_0 - x_1} \\
& = g(x_1) (gf)^{[1]}(x_0,x_1) - (gf)(x_1)g^{[1]}(x_0,x_1).
\end{split}
\end{align}
By continuity, this equality holds true for any $x_0,x_1 \in \mathbb{R}$. Hence, by Lemma $\ref{simplification1}$, we have, for any $t\in \mathbb{R}$ and any $X\in \mathcal{S}^p(\mathcal{H})$,
\begin{align*}
& \varphi_{k,X}^1(t) \\
& = \left[ \Gamma^{A+tK, A+tK}((gf)^{[1]}) \right](Xg(A+tK)) - \left[ \Gamma^{A+tK, A+tK}(g^{[1]}) \right](X(gf)(A+tK)).
\end{align*}

As explained in the first part of the proof, the mappings $t\in \mathbb{R} \mapsto Xg(A+tK) \in \mathcal{S}^p(\mathcal{H})$ and $t\in \mathbb{R} \mapsto X(gf)(A+tK) \in \mathcal{S}^p(\mathcal{H})$ are continuous in $0$. Note that $g', (gf)' \in C_0(\mathbb{R})$ so that, by continuity of the map defined in \eqref{casefnC0} and the uniform boundedness of the mappings $\Gamma^{A+tK, A+tK}((gf)^{[1]}), \Gamma^{A+tK, A+tK}(g^{[1]}), t\in \mathbb{R}$, we get that
\begin{align*}
\underset{t\to 0}{\lim} ~ \varphi_{k,X}^1(t) = \left[ \Gamma^{A, A}((gf)^{[1]}) \right](Xg(A)) - \left[ \Gamma^{A, A}(g^{[1]}) \right](X(gf)(A)) = \varphi_{k,X}^1(0),
\end{align*}
so that $\varphi_{k,X}^1$ is continuous in $0$.

Now, let $n\geq 2$ and assume that for any $1\leq i \leq n-1$ and any $X=(X_1,\ldots, X_{i}) \in \mathcal{S}^p(\mathcal{H})^i$, $\varphi_{k,X}^{i}$ is continuous in $0$. First, we show by induction the following formula: for every $(x_0,\ldots,x_n) \in \mathbb{R}^{n+1}$,
\begin{align}\label{recfng}
\begin{split}
& f^{[n]}(x_0, \ldots, x_n)g(x_0) \\
& = (gf)^{[n]}(x_0, \ldots, x_n) - f(x_n) g^{[n]}(x_0,\ldots, x_n) - \sum_{l=1}^{n-1} g^{[l]}(x_0, \ldots, x_l)f^{[n-l]}(x_{l} ,\ldots, x_n).
\end{split}
\end{align}
For $n=2$, first note that the computations made in \eqref{calculsn1} give
$$
f^{[1]}(x_0,x_1)g(x_0) = (gf)^{[1]}(x_0,x_1) - f(x_1)g^{[1]}(x_0,x_1)
$$
so that
\begin{align*}
& f^{[2]}(x_0,x_1,x_2)g(x_0) \\
& = \dfrac{f^{[1]}(x_0,x_2)g(x_0) - f^{[1]}(x_1,x_2)g(x_1)}{x_0-x_1} + \dfrac{f^{[1]}(x_1,x_2)g(x_1) - f^{[1]}(x_1,x_2)g(x_0)}{x_0-x_1} \\
& = \dfrac{(gf)^{[1]}(x_0,x_2) - f(x_2)g^{[1]}(x_0,x_2) - (gf)^{[1]}(x_1,x_2) + f(x_2)g^{[1]}(x_1,x_2)}{x_0-x_1} \\
& \ \ \ \ - g^{[1]}(x_0,x_1)f^{[1]}(x_1,x_2) \\
& = (gf)^{[2]}(x_0,x_1,x_2) - f(x_2) g^{[2]}(x_0,x_1,x_2) - g^{[1]}(x_0,x_1)f^{[1]}(x_1,x_2),
\end{align*}
which shows \eqref{recfng} for $n=2$. Assume now that we have \eqref{recfng} at the order $n$ and show that it still holds true at the order $n+1$. We have
\begin{align}\label{recu1}
\begin{split}
& f^{[n+1]}(x_0,\ldots, x_{n+1})g(x_0) \\
& = \dfrac{f^{[n]}(x_0, x_2, \ldots, x_{n+1})g(x_0) - f^{[n]}(x_1,\ldots, x_{n+1})g(x_1)}{x_0-x_1} \\
& \ \ \ \ + \dfrac{f^{[n]}(x_1,\ldots, x_{n+1})g(x_1) - f^{[n]}(x_1,\ldots, x_{n+1})g(x_0)}{x_0-x_1} \\
& = \dfrac{f^{[n]}(x_0, x_2, \ldots, x_{n+1})g(x_0) - f^{[n]}(x_1,\ldots, x_{n+1})g(x_1)}{x_0-x_1} \\
& \ \ \ \ - g^{[1]}(x_0,x_1) f^{[n]}(x_1,\ldots, x_{n+1}).
\end{split}
\end{align}
By assumption, we have
\begin{align}\label{recu2}
\begin{split}
& \dfrac{f^{[n]}(x_0, x_2, \ldots, x_{n+1})g(x_0) - f^{[n]}(x_1,\ldots, x_{n+1})g(x_1)}{x_0-x_1} \\
& = \dfrac{(gf)^{[n]}(x_0, x_2, \ldots, x_{n+1}) - (gf)^{[n]}(x_1, \ldots, x_{n+1})}{x_0-x_1} \\
& \ \ \ \ - f(x_{n+1}) \dfrac{g^{[n]}(x_0,x_2,\ldots, x_{n+1}) - g^{[n]}(x_1,\ldots, x_{n+1})}{x_0-x_1} \\
& \ \ \ \ - \sum_{l=1}^{n-1} \dfrac{g^{[l]}(x_0,x_2, \ldots, x_{l+1})-g^{[l]}(x_1, \ldots, x_{l+1})}{x_0-x_1}f^{[n-l]}(x_{l+1} ,\ldots, x_{n+1}) \\
& = (gf)^{[n+1]}(x_0, \ldots, x_{n+1}) - f(x_{n+1}) g^{[n+1]}(x_0,\ldots, x_{n+1}) \\
& \ \ \ \ - \sum_{l=1}^{n-1} g^{[l+1]}(x_0, \ldots, x_{l+1}) f^{[n-l]}(x_{l+1} ,\ldots, x_{n+1}).
\end{split}
\end{align}
We have
\begin{align*}
& \sum_{l=1}^{n-1} g^{[l+1]}(x_0, \ldots, x_{l+1}) f^{[n-l]}(x_{l+1} ,\ldots, x_{n+1}) + g^{[1]}(x_0,x_1) f^{[n]}(x_1,\ldots, x_{n+1}) \\
& = \sum_{j=2}^{n} g^{[j]}(x_0, \ldots, x_j) f^{[n+1-j]}(x_j ,\ldots, x_{n+1}) + g^{[1]}(x_0,x_1) f^{[n]}(x_1,\ldots, x_{n+1}) \\
& = \sum_{j=1}^{n} g^{[j]}(x_0, \ldots, x_j) f^{[n+1-j]}(x_j ,\ldots, x_{n+1}).
\end{align*}
Hence, by \eqref{recu1} and \eqref{recu2}, Formula \eqref{recfng} is proved at the order $n+1$. Note that the previous computations make sense when $x_0 \neq x_1$ and by continuity, the formula also holds true for $x_0 = x_1$. Let $(x_0, \ldots, x_n)\in \mathbb{R}^{n+1}$. We multiply Formula \eqref{recfng} by $g(x_n)$ and we get
\begin{align*}
f^{[n]}G_k^n(x_0, \ldots, x_n) 
& = g(x_n)(gf)^{[n]}(x_0, \ldots, x_n) - (gf)(x_n) g^{[n]}(x_0,\ldots, x_n) \\
& \ \ \ - g(x_n)\sum_{l=1}^{n-1} g^{[l]}(x_0, \ldots, x_l)f^{[n-l]}(x_{l} ,\ldots, x_n).
\end{align*}
Let $X_1, \ldots, X_n \in \mathcal{S}^p(\mathcal{H})$. Applying the operator
$$
\left[\Gamma^{A+tK, \ldots, A+tK}(\cdot)\right](X_1,\ldots, X_n)
$$
to the previous equality gives, by Lemma $\ref{simplification1}$ and Lemma $\ref{simplificationsep}$,
\begin{align*}
\varphi_{k,X}^n(t) = \varphi_1(t) - \varphi_2(t) - \sum_{l=1}^{n-1} \varphi_{3,l}(t)
\end{align*}
where
$$
\varphi_1(t) = \left[\Gamma^{A+tK, \ldots, A+tK}((gf)^{[n]})\right](X_1,\ldots, X_{n-1}, X_ng(A+tK)),
$$
$$
\varphi_2(t) = \left[\Gamma^{A+tK, \ldots, A+tK}(g^{[n]})\right](X_1,\ldots, X_{n-1}, X_n (gf)(A+tK)),
$$
and for any $1\leq l \leq n-1$,
$$
\varphi_{3,l}(t) = \varphi_{3,l}^1(t)\varphi_{3,l}^2(t)
$$
with 
$$
\varphi_{3,l}^1(t) = \left[\Gamma^{A+tK, \ldots, A+tK}(g^{[l]})\right](X_1,\ldots,X_{l})
$$
and
$$\varphi_{3,l}^2(t) = \left[\Gamma^{A+tK, \ldots, A+tK}(f^{[n-l]})\right](X_{l+1},\ldots, X_{n-1}, X_n g(A+tK)).
$$
The functions $g$ and $gf$ belong to $C_b(\mathbb{R})$ so the mappings $t \in \mathbb{R} \mapsto X_ng(A+tK) \in \mathcal{S}^p(\mathcal{H})$ and $t \in \mathbb{R} \mapsto X_n (gf)(A+tK) \in \mathcal{S}^p(\mathcal{H})$ are continuous in $0$. We have $(gf)^{(n)}, g^{(n)} \in C_0(\mathbb{R})$ so by the continuity of the map defined in \eqref{casefnC0}, we get that $\varphi_1$ and $\varphi_2$ are continuous in $0$.

Now let $1 \leq l \leq n-1$. Since $g^{(l)} \in C_0(\mathbb{R})$, $\varphi_{3,l}^1$ is continuous in $0$. We have $1 \leq n-l \leq n-1$, and by assumption, $\varphi_{k,Y}^{n-l}$ is continuous in $0$ for any $Y \in \mathcal{S}^p(\mathcal{H})^l$. Hence, by composition with the continuous map $t \in \mathbb{R} \mapsto X_ng(A+tK) \in \mathcal{S}^p(\mathcal{H})$, we get that $\varphi_{3,l}^2$ is continuous in $0$, so that $\varphi_{3,l}$ also is. We hence proved that $\varphi_{k,X}$ is continuous in $0$, which concludes the proof of the proposition.

\end{proof}

\vskip 1cm
\noindent
{\bf Acknowledgements.} The author is supported by NSFC (11801573).

\vskip 1cm

\end{document}